\newcommand{\N}{\mathbb{N}}
\newcommand{\Z}{\mathbb{Z}}
\newcommand{\Q}{\mathbb{Q}}
\newcommand{\R}{\mathbb{R}}
\newcommand{\ZZ}{\mathbb{Z}_2}
\newcommand{\QQ}{\mathbb{Q}_2}
\newcommand{\FF}{\mathbb{F}_2}
\newcommand{\FFd}{\mathbb{F}_{2^d}}
\newcommand{\ZZd}{\mathbb{Z}_{2^d}}
\newcommand{\GF}[1]{\mathbb{F}_{#1}}
\newcommand{\Kt}{K \llbracket t \rrbracket}
\newcommand{\OKt}{\mathcal{O}_{K} \llbracket t  \rrbracket}
\newcommand{\OLt}{\mathcal{O}_{L} \llbracket t  \rrbracket}
\newcommand{\OK}{\mathcal{O}_{K}}
\newcommand{\bigslant}[2]{{\raisebox{.2em}{$#1$}\left/\raisebox{-.2em}{$#2$}\right.}}
\newcommand{\gf}[1]{\overline{\mathtt{#1}}}
\newcommand{\LL}[1]{L\left[#1\right]}
\newcommand{\vertIII}[1]{{\left\vert\kern-0.25ex\left\vert\kern-0.25ex\left\vert #1
    \right\vert\kern-0.25ex\right\vert\kern-0.25ex\right\vert}}
\DeclareMathOperator{\Char}{char}
\DeclareMathOperator{\Id}{Id}
\DeclareMathOperator{\Inf}{inf}
\DeclareMathOperator{\Hom}{Hom}
\DeclareMathOperator{\End}{End}
\DeclareMathOperator{\Cl}{Cl}
\DeclareMathOperator{\Norm}{Norm}
\DeclareMathOperator{\Tr}{Tr}
\DeclareMathOperator{\Ord}{ord}
\DeclareMathOperator{\Lcm}{lcm}
\def\Ker{\mathop{\rm{Ker}}\nolimits }
\newcommand{\val}{\upsilon_2}
\newcommand{\softO}{\tilde O}
\newcommand{\cA}{\text{\rm A}}
\newcommand{\cM}{\text{\rm M}}
\newcommand{\cC}{\text{\rm C}}
\newcommand{\cT}{\text{\rm T}}
\newcommand{\HU}{$(\mathrm{H}_U)$\xspace}
\newcommand{\HV}{$(\mathrm{H}_V)$\xspace}
\newcommand{\Ep}{$(\mathrm{E}_+)$\xspace}
\newcommand{\Em}{$(\mathrm{E}_-)$\xspace}
\newcommand{\deltag}{\delta g}
\newcommand{\deltaz}{\delta z}
\newcommand{\calV}{\mathcal V}
\newcommand{\RoC}{\text{\rm RoC}}
\newtheorem{defi}{Definition}
\newtheorem{thm}[defi]{Theorem}
\newtheorem*{thm*}{Theorem}
\newtheorem{lem}[defi]{Lemma}
\newtheorem{prop}[defi]{Proposition}
\newtheorem{cor}[defi]{Corollary}
\theoremstyle{remark}
\newtheorem{remark}[defi]{Remark}
\title{Fast computation of elliptic curve isogenies in characteristic two}
\date{\today}
\author[Caruso]{Xavier Caruso}
\address{%
  Xavier Caruso, %
  Univ. Bordeaux, %
  CNRS - IMB - UMR 5251, %
  F-33405 Talence, %
  France. %
  }
\email{xavier.caruso@normalesup.org}
\author[Eid]{\'{E}lie Eid}
\address{%
  \'{E}lie Eid, %
  Univ. Rennes, %
  CNRS, IRMAR - UMR 6625, F-35000
  Rennes, %
  France. %
}
\email{elie.eid@univ-rennes1.fr}
\author[Lercier]{Reynald Lercier}
\address{%
  Reynald Lercier, %
  DGA \& Univ Rennes, %
  CNRS, IRMAR - UMR 6625, F-35000
 Rennes, %
  France. %
}
\email{reynald.lercier@m4x.org}
\subjclass[2010]{11G20, 11S99, 11Y99,  14Q05}
\thanks{The authors would like to thank the referee for his/her comments
  improving the presentation of the manuscript.}
\begin{document}

\begin{abstract}
  We propose an algorithm that calculates isogenies between elliptic curves
  defined over an extension $K$ of $\QQ$. It consists in
  efficiently solving with a logarithmic loss of $2$-adic precision the first
  order differential equation satisfied by the isogeny.

  We give some applications, especially computing over finite fields of
  characteristic 2 isogenies of elliptic curves and irreducible polynomials,
  both in quasi-linear time in the degree.
\end{abstract}

\maketitle

\section{Introduction}
\label{sec:introduction}

 With the advent of public key cryptography in the 1970s, a keen interest for
 elliptic curves emerged. Pretty soon, attention has been given to their
 isogenies, especially for calculating the number of points of curves defined
 over finite fields~\cite{Schoof95} and more recently for isogeny-based
 cryptography~\cite{RS06,Couveignes06,DFJP14}. Other applications have
 followed: primality proving, normal basis of field extensions, computation of
 irreducible polynomials, finite field isomorphisms,
 \textit{etc.}~\cite{CL09,CEL12,EL13,Narayanan18,BDFDFS19}.
 In this work, we concentrate mainly on algorithms for fields of
 characteristic two that proceed by lifting curves and isogenies to the
 $2$-adics.\smallskip

 Let $k$ be a field and $\ell > 1$ an odd integer. Let $E$ and $\tilde{E}$ be
 two elliptic curves defined over $k$. We suppose that there exists a
 separable isogeny $I : E \to \tilde E$ of degree $\ell$ defined over $k$ as
 well, and we are interested in designing a fast algorithm for computing it.
 When $\Char(k) \neq 2$, this can be achieved by solving a certain nonlinear
 differential equation attached to the
 situation~\cite{bomosasc08,lesi08,lava16}.  Let us recall briefly how it
 works.  It is well known that $E$ and $\tilde{E}$ can be realized by the
 following equations:
 \begin{equation}
   \label{eq:weierstrass}
   E: y^2 = x^3+ a_2\,x^2+ a_4\,x+ a_6
   \quad \text{ and }\quad
   \tilde{E}: y^2 = x^3+ \tilde{a}_2\,x^2+ \tilde{a}_4\,x+ \tilde{a}_6.
 \end{equation}
 These are the so-called Weierstrass models.
 Moreover, by \cite[§~2.4]{kohel96}, we know that the isogeny $I$ has
 an expression of the form
 \begin{equation}
   \label{eq:isogeny}
   I(x,y) = \left( \eta(x),\, c \: y\: \eta'(x) \right)
 \end{equation}
 where $c \in k^\times$ and $\eta$ is a rational function whose numerator and
 denominator have degree $\ell$ and $\ell{-}1$ respectively.  The constant $c$
 is the so-called \emph{isogeny differential} and can be characterized as
 follows: if $I^*:~\Omega_{\tilde{E}/k} \longrightarrow \Omega_{{E}/k}$ is the
 map induced by $I$ on the tangent spaces at $0$, we have
 $\frac{dx} y = c \cdot \frac{d\tilde x}{\tilde y}$. (see~\cite{silv1,silv2}).  The
 terminology ``normalized isogenies'' is sometimes used in the case $c=1$.
 Combining Eqs.~\eqref{eq:weierstrass} and \eqref{eq:isogeny}, we realize that
 the computation of $I$ reduces to solving the following nonlinear
 differential equation:
 \begin{equation}\label{eq:3}
   c^2 \cdot
   (x^3+ a_2\,x^2+ a_4\,x+ a_6)\cdot
   \eta'{}^2 =
   \eta^3 +%
   \tilde{a}_2\,\eta^2 +%
   \tilde{a}_4\,\eta +%
   \tilde{a}_6.
 \end{equation}
 For several reasons, it is convenient to perform the change of variables
 $t = 1/x$ and the change of functions $z(t) = 1/{\eta(x)}$.
 Eq.~(\ref{eq:3}) then becomes
 \begin{equation}
   \label{eq:isog}
   c^2 \cdot (t + a_2 t^2 + a_4 t^3 + a_6 t^4) \cdot z'{}^2 =
   z + \tilde a_2 z^2 + \tilde a_4 z^3 + \tilde a_6 z^4\,.
 \end{equation}
 When $k$ has characteristic $0$, Bostan \textit{et al.}~\cite{bomosasc08}
 proposed to solve Eq.~\eqref{eq:isog} using a well-designed Newton
 iteration. This strategy allows them to compute
 $z(t) \text{ mod } t^{2\ell + 1}$ for a cost of $\softO(\ell)$ operations in
 the ground field and, in a second time, to recover $\eta$ using Pad\'e
 approximants for the same cost.  This approach continues to work well when
 the characteristic of $k$ is positive but large compared to $\ell$. However,
 in the case of small characteristic~$p$, divisions by $p$ do appear and
 prevent the computation to be carried out to its end.  Lercier and
 Sirvent~\cite{lesi08} tackled this issue by lifting $E$, $\tilde E$ and $I$
 to the $p$-adics. In the lifted situation, divisions by $p$ can be performed
 but lead to numerical instability.  One then needs to do a neat analysis of
 the losses of precision.  When $p$ is odd, Lercier and Sirvent showed that
 the number of lost digits stays within $O(\log^2 \ell)$. Later on, still
 assuming that $p$ is odd, Lairez and Vaccon~\cite{lava16} managed to improve
 on this result and came up with a loss of precision in $\log \ell + O(1)$.

 It turns out that extending this approach to characteristic~$2$ is not an
 easy task for a couple of reasons. First of all, the general equation of an
 ordinary elliptic curve in characteristic~$2$ is no longer
 $y^2 = x^3+ a_2 x^2+ a_4 x+ a_6$ but $y^2 + xy = x^3 + a_2 x^2 + a_6$. As a
 consequence, the differential equation we need to study, which is defined
 over the $2$-adics, now takes the form
 \begin{equation}
   \label{eq:isog2}
   c^2 \cdot \Big(4t + (4a_2{+}1) t^2 + 4 a_6 t^4\Big) \cdot z'{}^2
   = 4z + (4 \tilde a_2{+}1) z^2 + 4 \tilde a_6 z^4
 \end{equation}
 for some constants $c, a_2, a_6, \tilde a_2, \tilde a_6$ in the ring of Witt
 vectors of $k$ (if $k$ is the finite field $\FFd$, it is simply $\ZZd$, the
 ring of integers of the unique unramified extension of $\QQ$ of degree $d$).
 Although they look similar, Eq.~\eqref{eq:isog2} is much more difficult to
 handle than Eq.~\eqref{eq:isog}. One reason is structural: the polynomial in
 front of $z'{}^2$, namely $4t + (4a_2{+}1) t^2 + 4 a_6 t^4$, has a root of
 norm $1/4$, meaning that the differential equation we are interested in
 exhibits a singularity in the domain of convergence of the solution we look
 for.  Another reason comes from the exponent $2$ on $z'$, which suggests that
 solving Eq.~\eqref{eq:isog2} will require to extract square roots at some
 point; however, extracting square roots in residual characteristic $2$ is
 known to be a highly unstable operation.  Actually a straightforward
 extension of Lercier and Sirvent's algorithm to characteristic~$2$ leads to
 dramatic losses of $2$-adic precision of order of magnitude $\ell$ (instead
 of $\log \ell$ or $\log^2 \ell$).  The conclusion is that this approach is
 suboptimal and, until now, we were merely reduced to rely on an old algorithm
 by Lercier~\cite{L96} whose theoretical efficiency is subject to heuristics,
 although it behaves surprisingly well in practice (see~\cite{DeFeo11} for a
 discussion on this).

 In this paper, we reconsider the $2$-adic case and propose a new algorithm to
 solve Eq.~\eqref{eq:isog2}. Our algorithm is highly stable and reaches a
 logarithmic loss of $2$-adic precision as Lairez and Vaccon's algorithm
 does. Moreover, it performs very well in practice, allowing for the
 computation of isogenies over $\FF$ of degree up to one million in less than
 one minute. This is the main result of Section~\ref{sec:main-result}.

 \begin{thm}[See Theorem~\ref{thm:stabnonlinear} and
   Proposition~\ref{prop:complexity}]
   \label{thm:main}
 Let $K$ be a finite extension of $\QQ$. Let $\OK$ be its ring of
 integers and $\OK^\times$ the set of invertible elements in $\OK$.
 There exists an algorithm that takes as input
 \begin{itemize}
 \item two positive integers $n$ and $N$,
 \item two elements $a,b \in \OK^\times$\,,
 \item two series $u, v \in \OKt$ with $u(0) \in \OK^\times$
 \end{itemize}
 and, assuming that the differential equation in $z$
 \begin{equation}
 \label{eq:edintro}
 t (t{-}4a) \: u(t)^2 \: z'{}^2 = z\, (z{-}4b) \: v(z)^2
 \end{equation}
 has a unique solution in $t{\cdot}\OKt$, outputs this solution modulo $(2^N, t^n)$
 for a cost of $\softO(n)$ operations in $\OK$ at precision $O(2^M)$ with
 $M = \max(N, 3) + \lfloor \log_2 (n)\rfloor + 2$.
 \end{thm}

\noindent
As a consequence, we obtain efficient algorithms to compute isogenies
between elliptic curves defined over finite fields of characteristic~$2$.
We finally discuss an application of these results to the calculation of
irreducible polynomials defined over such fields in the spirit of the
construction of Couveignes and Lercier~\cite{CL13}.

\medskip

This article is supplemented by an appendix of theoretical flavor, in which
we reuse the techniques of $p$-adic precision introduced in the core of
the paper to prove that the radius of convergence of the solution of
Eq.~\eqref{eq:edintro} varies continuously with $u$. To some extent, this
result can be understood as the theoretical essence at the origin of the
excellent behaviour of our main algorithm.
Indeed, the assumption on $z$ made in our main theorem roughly
means that $z$ has a radius of convergence much larger than expected;
the fact that this radius of convergence remains large when the input
is perturbed is the key property behind the numerical stability
of the algorithm.

\section{Fast resolution of a $2$-adic differential equation}
\label{sec:main-result}

This section is devoted to the effective resolution of the nonlinear
differential equation \eqref{eq:edintro}, leading eventually to the proof of
our main theorem.  In more details, the computation model we will use
throughout this paper is introduced in Section~\ref{subsec:compmodel}. The two
next subsections are concerned with preliminary material: we show that
Eq.~\eqref{eq:edintro} has a unique solution in certain cases and study a pair
of linear differential equations that will eventually play a quite important
role.  Our algorithm is presented in Section~\ref{subsec:algo} and the proof
of its correctness is exposed in
Section~\ref{subsec:precision-analysis}. Finally the implementation and
corresponding timings are discussed in Section~\ref{sec:experiments}.

Throughout this section, the letter $K$ refers to a fixed algebraic
extension of $\QQ$. We recall that the $2$-adic valuation extends
uniquely to $K$; we will denote it by $\val$ and will always assume
that it is normalized by $\val(2) = 1$.
We let $\OK$ denote the ring of integers of $K$ and $\pi \in \OK$
be a fixed uniformizer of $K$. We reserve the letter $e$ for
the ramification index of the extension $K/\QQ$, so that we have
$\val(\pi) = 1/e$.
It will be convenient to extend the valuation to quotients of
$\OK$: if $x \in \OK/\pi^{eM}\OK$, we define $\val(x) = M $
when $x = 0$ and $\val(x) = \val(\hat x)$ where $\hat x \in \OK$ is
any lifting of $x$ otherwise.

\subsection{Computation model}
\label{subsec:compmodel}

Carrying explicit computations in $K$ is not straightforward because
elements of $K$ carry an infinite amount of information and need to
be truncated to fit in the memory of a computer: we sometimes say
that $K$ is an \emph{inexact} field.
Over the years, several computation models have been proposed to
handle these difficulties: interval arithmetic, floating point
arithmetic, lazy arithmetic, \emph{etc.}
We refer to \cite{caruso17} for a detailed discussion about this,
including many examples illustrating the advantages and the
disadvantages of each possible model.

Throughout this article, we will use the \emph{fixed point arithmetic}
model at precision $O(2^M)$, where $M$ is a fixed positive number in
$\frac 1 e\, \Z$.
Concretely, this means that we shall represent elements of $K$ by
expressions of the form $x + O(2^M)$ with $x \in \OK/\pi^{eM}\OK$.
Additions, subtractions and multiplications are defined
straightforwardly:
\begin{align*}
\big( x + O(2^M) \big) + \big( y + O(2^M) \big) & = (x + y) + O(2^M)\,, \\
\big( x + O(2^M) \big) - \big( y + O(2^M) \big) & = (x - y) + O(2^M)\,, \\
\big( x + O(2^M) \big) \times \big( y + O(2^M) \big) & = xy + O(2^M)\,.
\end{align*}
The specifications of division go as follows: for $x, y \in
\OK/\pi^{eM}\OK$, the division of $x + O(2^M)$ by $y + O(2^M)$
\begin{itemize}
\item raises an error if $\val(y) > \val(x)$,
\item returns $0 + O(2^M)$ if $x = 0$ in $\OK/\pi^{eM}\OK$,
\item returns any representative $z + O(2^M)$ with the property
$x = yz$ in $\OK/\pi^{eM}\OK$ otherwise.
\end{itemize}

\subsubsection*{Complexity notations and assumptions.}

In what follows, we shall always assume that we can perform additions,
subtractions, multiplications and divisions in the computation model described
above. Let $\cA(K; M)$ be an upper bound on the bit complexity of algorithms
that carry out these arithmetic operations.  When $K = \QQ$, the quotients
$\OK / \pi^{eM}\OK$ are just $\Z/2^M\Z$ and, relying on fast Fourier
transform, we can take $\cA(\QQ; M) \in \softO(M)$ (where the
$\softO$-notation means that we are hiding logarithmic factors).  More
generally, if $K$ is an extension of $\QQ$ of degree $d$ which is presented
either by a polynomial which remains irreducible modulo $2$ (unramified case)
or by an Eisenstein polynomial (totally ramified case), elements of
$\OK/2^{eM}\OK$ can be represented safely as polynomials over $\Z/2^M\Z$ of
degree at most $d$ and we can take $\cA(K; M) \in \softO(dM)$ (reducing
polynomial multiplications to integer ones with Kronecker
substitution method~\cite{kronecker82,schonhage82}). Finally, the same
estimates remain valid when $K$ is presented as a two-step extension, the
first one being given by an ``unramified'' polynomial and the second one being
given by an Eisenstein polynomial. We note that this covers all extensions of
$\QQ$.

We further assume that we are given a division-free algorithm for
multiplying polynomials over any exact base ring and we let $\cM(n)$ be a
bound on its algebraic complexity (\emph{i.e.} the number of arithmetical
operations in the base ring it performs). For convenience, we will also
suppose that the function $\cM$ satisfies the superadditivity
assumption, that is:
$$\forall n, n' \in \N, \quad \cM(n + n') \geq \cM(n) + \cM(n').$$
Standard algorithms allow us to take $\cM(n) \in \softO(n)$.
Besides, we observe that an algorithm as above can be used to multiply
polynomials over $K$ in the fixed point arithmetic model since additions,
multiplications and divisions in this model all reduce to the similar
operations in the \emph{exact} quotient ring $\OK/\pi^{eM}\OK$.
As a consequence, when working in the fixed point arithmetic model at
precision $O(2^M)$, the bit complexity of the multiplication of two
polynomials of degree $n$ over $K$ is bounded by above by
$\cM(n) \cdot \cA(K;M)$, which itself stays within $\softO\big(nM
\cdot
[K\:{:}\:\QQ]\big)$ under standard assumptions.

\subsection{The setup}
\label{subsec:nonlinear}

Let $\Kt$ be the ring of formal series over $K$ (in the variable
$t$). Given two series $U,V \in \Kt$, we consider the following
nonlinear differential equation whose unknown is~$z$:
\begin{equation}
\label{eq:nonlinear}
U \cdot z'{}^2 = V \circ z\,.
\end{equation}
When $V$ is an actual series, the composite $V \circ z$ is not
always well defined; however, it is as soon as $z$ vanishes at $0$,
\emph{i.e.} $z \in t \Kt$. For this reason, in what follows, we will
always look for solutions of~(\ref{eq:nonlinear}) in $t \Kt$. We will also always
assume that \emph{both $U$ and $V$ have $t$-adic valuation $1$}; in
other words, we suppose that there exist \emph{nonzero} scalars $u_1,
v_1 \in K$ such that $U = u_1 t + O(t^2)$ and $V = v_1 t + O(t^2)$.

The following proposition shows that these assumptions are enough
to guarantee the existence and the uniqueness of a solution
to Eq.~\eqref{eq:nonlinear}.

\begin{prop}
  \label{prop:solnonlinear}
  Assuming that $U$ and $V$ have $t$-adic valuation $1$, the
  differential equation \eqref{eq:nonlinear} admits a unique nonzero
  solution in $t\Kt$.
\end{prop}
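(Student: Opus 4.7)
The plan is to look for a solution $z = \sum_{n\ge 1} z_n t^n \in t \Kt$ and determine its coefficients one at a time by identifying, for each $n \ge 1$, the coefficient of $t^n$ on both sides of Eq.~\eqref{eq:nonlinear}. Since $V$ and $z$ both have zero constant term, the composite $V \circ z$ is a well-defined element of $t \Kt$, so the identification makes sense.

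First I would pin down the valuation. Write $U = u_1 t + u_2 t^2 + \cdots$ and $V = v_1 t + v_2 t^2 + \cdots$ with $u_1, v_1 \neq 0$, and suppose $z$ has $t$-adic valuation $m \ge 1$. The left-hand side $U z'{}^2$ then has valuation $1 + 2(m-1) = 2m - 1$, whereas $V \circ z$ has valuation $m$ (using $\val_t V = 1$). Equating forces $m = 1$, hence any nonzero solution must begin with a nonzero $z_1 t$. Matching the $t^1$-coefficient gives $u_1 z_1^2 = v_1 z_1$, which for $z_1 \neq 0$ forces $z_1 = v_1/u_1$; so $z_1$ is uniquely determined. (If instead $z_1 = 0$ the valuation analysis above yields $z = 0$, which takes care of the unicity among nonzero solutions.)

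Next I would derive the recursion for the higher coefficients. For $n \ge 2$, the coefficient of $t^n$ in $U z'{}^2$ expands as
\begin{equation*}
[t^n]\bigl(U z'{}^2\bigr) = \sum_{\substack{k + m_1 + m_2 = n+2 \\ k,m_1,m_2 \ge 1}} u_k\, m_1 m_2\, z_{m_1} z_{m_2},
\end{equation*}
and $z_n$ appears only in the terms where $k=1$ and one of the $m_i$ equals $n$ (forcing the other to be $1$), contributing $2 n\, u_1 z_1\, z_n$. On the right-hand side, $[t^n](V\circ z) = v_1 z_n + (\text{polynomial in } z_1,\dots,z_{n-1})$, since $z^k$ for $k \ge 2$ has its $t^n$-coefficient depending only on $z_1,\dots,z_{n-1}$. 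The equation at order $n$ therefore reads
\begin{equation*}
\bigl(2 n u_1 z_1 - v_1\bigr)\, z_n = P_n(z_1,\dots,z_{n-1})
\end{equation*}
for some explicit polynomial $P_n$ with coefficients in $K$. Substituting $u_1 z_1 = v_1$ from step one, the coefficient becomes $(2n-1)\, v_1$, which is a nonzero element of $K$ since $K$ has characteristic zero and $v_1 \neq 0$. Consequently $z_n$ is uniquely determined by $z_1,\dots,z_{n-1}$.

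Building the $z_n$ inductively produces a formal series $z \in t \Kt$ that satisfies Eq.~\eqref{eq:nonlinear} identically, and the argument simultaneously shows that this series is the only nonzero element of $t \Kt$ with this property, proving the proposition. There is no real obstacle here; the only thing worth checking carefully is that the "pivot" $(2n-1) v_1$ of the recursion never vanishes, which is what makes the construction go through without divisions becoming illegal.
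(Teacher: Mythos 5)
Your proof is correct and follows essentially the same route as the paper: matching coefficients of $t^n$ on both sides, observing that $z_1$ must be $0$ or $v_1/u_1$, and that for $n\ge 2$ the coefficient $z_n$ is determined by a recursion with nonzero pivot $(2n-1)v_1$ (the paper organizes the right-hand side via Fa\`a di Bruno's formula, but that is a cosmetic difference). Your opening valuation argument is a slightly cleaner way of disposing of the $z_1=0$ case than the paper's remark that $P_n$ vanishes at the origin, but the substance is identical.
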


\begin{proof}
  Write $U = \sum\limits_{n=1}^{\infty} { u_n\, t^n}$ and
  $V = \sum\limits_{n=1}^{\infty} { v_n\, t^n}$. We are looking
  for a solution of Eq.~\eqref{eq:nonlinear} of the form
  $z = \sum \limits _{n=1}^{\infty} { z_n\, t^n}$.
  Taking the $n$-th derivative of Eq.~\eqref{eq:nonlinear} (and using
  Fa\`a di Bruno's formula to evaluate the successive derivatives
  of $V \circ z$), we end up with the relations
  \begin{multline}
  \label{eq:faadibruno}
    \sum \limits_{i=0}^{n-1} \sum \limits _{j=0}^i
    (j+1)(i-j+1)z_{j+1}z_{i-j+1}u_{n-i} =\\
    \sum \limits _{k_1 + 2k_2 + \cdots +
      nk_n=n} \frac{(k_1+k_2+\cdots + k_n)!}{k_1!\, k_2!\, \cdots \,k_n!}\, v_{k_1 +
      k_2 + \ldots +k_n}\, z_1 ^{k_1} z_2^{k_2} \cdots z_n ^{k_n}\,.
  \end{multline}
  When $n = 1$, this formula reduces to $u_1 z_1^2 = v_1 z_1$, showing that
  $z_1$ must be equal to $0$ or ${v_1}/{u_1}$ since $u_1$ and $v_1$ are
  nonzero scalars. For bigger $n$, we observe that the coefficient $z_n$ only
  appears in the summands indexed by $(i,j) = (n{-}1, 0)$ and
  $(i,j) = (n{-}1, n{-}1)$ in the left hand side of Eq.~\eqref{eq:faadibruno}
  and the summand indexed by $(k_1, \ldots, k_n) = (0, \ldots, 0, 1)$ in the
  right hand side.  Isolating them since $v_1\ne 0$ by hypothesis, one obtains
  $$z_n = \frac{P_n(z_1,\ldots,z_{n-1})}{(2n{-}1)\:v_1}$$
  for some polynomial $P_n \in K[X_1, \ldots, X_{n-1}]$ vanishing at
  $(0, \ldots, 0)$. It follows for this observation that $z$ must
  vanish if $z_1$ vanishes. Otherwise, the coefficients $z_n$ are
  all uniquely determined, then showing the existence and the
  unicity of a nonzero solution to Eq.~\eqref{eq:nonlinear}.
\end{proof}

We now introduce the two following additional assumptions:
\begin{itemize}
\item[\HU:]
there exists $a \in \OK^\times$ and $u \in \OKt$ with $u(0) \in \OK^\times$
s.t. $U(t) = t(t{-}4a) \cdot u(t)^2$;
\item[\HV:]
there exists $b \in \OK^\times$ and $v \in \OKt$ with $v(0) \in \OK^\times$
s.t. $V(t) = t(t{-}4b) \cdot v(t)^2$.
\end{itemize}

\begin{remark}
\label{rem:HUHV}
  Both Assumptions \HU and \HV are fulfilled for the differential
  equation~\eqref{eq:isog2} (which is the one that we need to solve in
  order to compute isogenies between elliptic curves) after possibly
  replacing $K$ by its unramified extension of degree $2$.
  Indeed $U$ is then
  the polynomial $c^2 \big(4t + (4 a_2 + 1) t^2 + 4 a_6 t^4\big)$ for
  some $c \in \OK^\times$ and some $a_2, a_6 \in \OK$. Looking at valuations,
  we find that its Newton polygon has a segment of slope $-2$. Consequently
  $U$ has a root of valuation $2$, \emph{i.e.} $U$ is divisible by $(t{-}4a)$
  for some $a \in \OK^\times$. Since $U$ is also obviously divisible by $t$,
  we find that
$U(t) = c^2 \cdot t(t{-}4a) \cdot U_0(t)$
where $U_0$ is the polynomial of degree $2$ explicitly given by
$$U_0(t) = 4 a_6 \,t^2 + 16 a a_6 \,t + (64 a^2 a_6 + 4 a_2 + 1).$$
In particular, we observe that $U_0(t) \equiv 1 \pmod 4$ and $U_0(0)
\equiv 1{+}4 a_2 \pmod 8$.
This ensures that $U_0$ admits a square root in $\OLt$ with
$L = K[\sqrt{1{+}4a_2}]$. It is easy to check that $L = K[\mu]$
where $\mu$ is a root of the polynomial $P(X) = X^2{-}X{-}a_2$.
Since $P$ is separable modulo $2$, we deduce that $L$ is unramified
over $K$. More precisely, if $\text{Tr}_{K/\Q_2} (a_2)$ is odd, $L$
is the unique unramified extension of $K$ of degree $2$ and $L=K$
otherwise.
Letting finally $u = c \: \sqrt{U_0}$, we find that \HU is satisfied
over $\OLt$.
The fact that \HV is satisfied as well is proved similarly.
\end{remark}

\begin{remark}
\label{rem:HUHVbis}
We may further remark that an ordinary elliptic curve
$E/\FFd: y^2 + xy = x^3 + a_2 x^2 + a_6$ is the twist of the elliptic curve
$E'/\FFd: y^2 + xy = x^3 + a_6$ up to the twisting isomorphism
$(x,y)\mapsto (x,y{+}sx)$ where $s$ is a solution of the equation
$s^2+s+a_2 = 0$ (possibly defined over a quadratic extension of $\FFd$).
Since \HU and \HV are fulfilled over $K$ for the $2$-adic differential
equation obtained from $E'$ and the twist $\tilde{E}'$ of the isogenous curve
$\tilde{E}$, we can avoid the transition by the quadratic extension $L$ for
solving Eq.~(\ref{eq:isog2}) between $E'$ and $\tilde{E}'$, even if a
quadratic extension may be finally needed to obtain the isogeny between $E$
and $\tilde{E}$ by applying the twisting isomorphisms.
\end{remark}

In the next subsections, we are going to design an efficient algorithm
to compute the unique solution of Eq.~\eqref{eq:nonlinear} under Assumptions
\HU and \HV.

\subsection{Two linear differential equations}
\label{subsec:linear}

We introduce two auxiliary linear differential equations that will appear
later on as important ingredients in the resolution of the nonlinear
differential equation~\eqref{eq:nonlinear}. Precisely, given
$a \in \OK^\times$, we consider
$$\begin{array}{r@{:\hspace{2em}}l}
\text{\Ep} & t (t{-}4a)\, y' +  (t{-}2a)\, y = f\,, \smallskip \\
\text{\Em} & t (t{-}4a)\, y' -  (t{-}2a)\, y = f\,,
\end{array}$$
where $y$ is the unknown and the right hand side $f$ lies in $\Kt$.

In the following, if $n$ is a nonnegative integer, we denote by $S_2(n)$ the
sum of its digits in base~$2$. For example $S_2(3) = 2$. One easily checks that
the inequality $S_2(n) \leq \lfloor \log_2(n{+}1) \rfloor $ is valid for all
$n \geq 0$ (here $\lfloor x \rfloor$ denotes the integer part of $x$) and that the
equality holds if and only if $n = 2^m -1$ for some $m$.

\begin{prop}
\label{prop:linear}
For any $f = \sum\limits_{i=0}^\infty f_i t^i \in \Kt$, the differential
equation \text{\Ep} (resp. \Em) admits a unique solution in~$\Kt$. Moreover
\begin{itemize}
\item
if $y = \sum\limits_{i=0}^\infty y_i t^i$ is the solution of \Ep,
we have
$$\begin{array}{r@{\quad}l}
\forall i \geq 0, &
\val(y_i) \geq \min \limits _{0 \leq k \leq i}
\val(f_k) - \lfloor \log_2(i{+}1)\rfloor - 1\,;
\end{array}$$
\item
if $y = \sum\limits_{i=0}^\infty y_i t^i$ is the solution of \Em,
we have
$$\begin{array}{r@{\quad}l}
&
\val(y_0) = \val(f_0) - 1\,, \smallskip \\
&
\val(y_1) \geq \min\big(\val(f_0) - 2, \, \val(f_1) - 1\big)\,, \smallskip \\
\forall i \geq 2, &
\val(y_i) \geq \min \limits _{2 \leq k \leq i}
\val(f_k) - \lfloor\log_2(i{-}1)\rfloor - 1\,.
\end{array}$$
\end{itemize}
\end{prop}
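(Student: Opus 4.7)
The plan is to work at the level of coefficients. Writing $y = \sum y_i t^i$ and $f = \sum f_i t^i$ and matching coefficients of $t^i$, I obtain for \Ep the two-term recurrence
$$-2a(2i{+}1)\, y_i + i\, y_{i-1} = f_i,$$
and for \Em the analogous
$$-2a(2i{-}1)\, y_i + (i{-}2)\, y_{i-1} = f_i,$$
valid for all $i \geq 0$ with the convention $y_{-1} = 0$. Since $2a(2i \pm 1)$ is nonzero in $K$, each equation uniquely determines $y_i$ from $y_{i-1}$ and $f_i$; this proves existence and uniqueness at once. For \Em, the cases $i = 0, 1$ give $y_0 = f_0/(2a)$ and $y_1 = -(f_1 + y_0)/(2a)$, from which the first two valuation bounds in the statement follow immediately.

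To establish the general bounds I unroll the recurrence, expressing $y_i$ as an explicit $K$-linear combination of $f_0, \ldots, f_i$. For \Ep the coefficient of $f_k$ in $y_i$ is, up to sign and a $2$-adic unit (the odd product $\prod_{j=k}^i (2j{+}1)$), equal to
$$\frac{i!/k!}{(2a)^{i-k+1}},$$
and for \Em (valid for $i \geq 2$ and $k \geq 2$) the analogous formula has $(i{-}2)!/(k{-}2)!$ in the numerator and $\prod_{j=k}^i (2j{-}1)$ in the denominator. A key observation for \Em is that the factor $(i{-}2)$ vanishes at $i = 2$, decoupling $y_1$ (and hence $f_0, f_1$) from all $y_i$ with $i \geq 2$; this is exactly why the statement does not mention $f_0$ or $f_1$ in the general bound.

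Evaluating the $2$-adic valuation of these coefficients via Legendre's formula $\val(n!) = n - S_2(n)$ yields, for \Ep,
$$\val\!\left(\frac{i!/k!}{(2a)^{i-k+1}}\right) = (i{-}S_2(i)) - (k{-}S_2(k)) - (i{-}k{+}1) = S_2(k) - S_2(i) - 1 \geq -S_2(i) - 1,$$
whence $\val(y_i) \geq \min_{0 \leq k \leq i} \val(f_k) - S_2(i) - 1$, and similarly for \Em with $S_2(i{-}2)$ in place of $S_2(i)$ and $k$ ranging over $\{2, \ldots, i\}$. Applying the inequality $S_2(n) \leq \lfloor \log_2(n{+}1) \rfloor$ noted just before the proposition (with $n = i$, respectively $n = i{-}2$) yields the stated bounds. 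The main obstacle here is conceptual rather than technical: one must notice that the one-digit loss per step coming from the factor $2a(2i \pm 1)$ is partly compensated by $\val(i)$ (respectively $\val(i{-}2)$) appearing in the numerator, and that the cumulative compensation $\sum_{j \leq i} \val(j) = i - S_2(i)$ turns a naive linear loss into the logarithmic loss $S_2(i) + 1$; the rest is careful bookkeeping on indices and the boundary cases $i = 0, 1$ for \Em.
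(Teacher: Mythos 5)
Your proof is correct and follows essentially the same route as the paper's: derive the two-term coefficient recurrence, unroll it to express $y_i$ as a linear combination of the $f_k$ with explicit coefficients $\pm\,(i!/k!)\,(2a)^{-(i-k+1)}$ times an odd (hence unit) denominator, and bound the valuation via $\val(n!)=n-S_2(n)$ together with $S_2(n)\le\lfloor\log_2(n{+}1)\rfloor$. The paper only writes out the \Ep case and declares \Em ``totally similar,'' so your explicit treatment of \Em --- in particular the vanishing of the factor $i{-}2$ at $i=2$, which decouples $f_0,f_1$ from the bound for $i\ge 2$ --- is a welcome completion of the same argument rather than a different one.
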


\begin{proof}
We only treat the equation \Ep, the case of \Em being totally similar.
Plugging $f = \sum\limits_{i=0}^\infty f_i t^i$ and
$y = \sum\limits_{i=0}^\infty y_i t^i$ in \Ep,
we obtain
\begin{equation}
  y_0 = -\frac{f_0}{2a}, \quad
  y_i = \frac{i\, y_{i-1} - f_i} {2a \cdot (2i{+}1)}\quad \text{for }i>0.
\end{equation}
The existence and the unicity of the solution of \Ep follows.
Regarding the growth of the coefficients, an easy
induction on $i$ shows that $y_i$ can be written in the form
$$y_i= \sum \limits _{k=0}^{i} \frac{2^{k-i-1}\cdot i!}{k!}
\, e_{i,k}\, f_k$$ with $e_{i,k} \in \OK$ for all $i$ and $k$.  We conclude by
applying Euler's formula,

\medskip

\noindent\hfill
  $\displaystyle
   \upsilon _2 \left( \frac{2^{k-i-1}i!}{ k!} \right) =
    (k-i -1)+ (i- S_2(i)) - (k- S_2(k))
  = -S_2(i) +S_2(k) -1\,.$
\hfill
\end{proof}

\medskip

The first part of Proposition~\ref{prop:linear} allows us to define the
function $\psi_+ : \Kt \to \Kt$ (resp. $\psi_- : \Kt \to \Kt$) taking
$f$ to the unique solution of the differential equation \Ep (resp. \Em).
Clearly $\psi_+$ and $\psi_-$ are $K$-linear mappings.
Moreover, given a positive integer $n$, Proposition~\ref{prop:linear}
again shows that $\psi_+$ and $\psi_-$ map $t^n \Kt$ to itself and then
induce $K$-linear endomorphisms $\psi_{+,n}$ and $\psi_{-,n}$ of
$\Kt/(t^n)$.

\begin{lem}
\label{lem:psipm}
For all $f \in \Kt$, we have the relation
$$t(t{-}4a) \cdot \psi_+(f) = \psi_-\big(t(t{-}4a) \cdot f\big).$$
\end{lem}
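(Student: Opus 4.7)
The plan is to reduce the identity to a direct computation using the two defining differential equations. Writing $y := \psi_+(f)$, the series $y$ is characterized as the unique solution in $\Kt$ of
\begin{equation*}
t(t{-}4a)\, y' + (t{-}2a)\, y = f.
\end{equation*}
I would introduce $g := t(t{-}4a)\cdot y$ and aim to show that $g$ satisfies \Em with right-hand side $t(t{-}4a)\cdot f$. Uniqueness of the solution, provided by Proposition~\ref{prop:linear}, then immediately gives $g = \psi_-\big(t(t{-}4a)\cdot f\big)$, which is the desired identity.

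The computation itself is routine. From $g = t(t{-}4a)\, y$ one has
\begin{equation*}
g' = (2t{-}4a)\, y + t(t{-}4a)\, y' = 2(t{-}2a)\, y + t(t{-}4a)\, y'.
\end{equation*}
Multiplying by $t(t{-}4a)$ and subtracting $(t{-}2a)\, g = t(t{-}4a)(t{-}2a)\, y$ collapses one of the $(t{-}2a)\, y$ terms, leaving
\begin{equation*}
t(t{-}4a)\, g' - (t{-}2a)\, g = t(t{-}4a)\cdot\big[\, t(t{-}4a)\, y' + (t{-}2a)\, y\,\big] = t(t{-}4a)\cdot f,
\end{equation*}
where the last equality uses that $y$ solves \Ep.

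There is no real obstacle here: the only mild subtlety is to make sure that the uniqueness statement of Proposition~\ref{prop:linear} applies to the source of the $\psi_-$ on the right, which it does since $t(t{-}4a)\cdot f$ still lies in $\Kt$. Everything else is purely formal manipulation of the defining equations, so the proof should fit in a handful of lines.
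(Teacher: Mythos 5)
Your proof is correct and follows exactly the paper's argument: show that $t(t{-}4a)\,\psi_+(f)$ solves \Em with right-hand side $t(t{-}4a)\,f$ and invoke uniqueness from Proposition~\ref{prop:linear}. The paper simply labels the verification ``a direct computation,'' which you have carried out correctly.
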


\begin{proof}
It is enough to check that $t(t{-}4a) \: \psi_+(f)$ is a solution of
$t (t{-}4a)\, y' -  (t{-}2a)\, y = t(t{-}4a) \: f$,
which is a direct computation.
\end{proof}

\begin{figure}[htbp]
  \begin{center}
    \parbox{0.85\linewidth}{%
      \begin{footnotesize}\SetAlFnt{\small\sf}%
        \begin{algorithm}[H]%
          \caption{Linearized equation solver} %
          \label{algo:LinSolve}%
          \SetKwInOut{Input}{Input} %
          \SetKwInOut{Output}{Output} %
          \SetKwProg{LinDiffSolve}{\tt LinDiffSolve}{}{}%
          \LinDiffSolve{$(a,f,n)$}{
            \Input{$a \in \OK^\times$, $n \in \N$ and $f = \sum\limits_{i=0}^{n-1} f_i t^i \in \Kt/(t^n)$}
            \Output{$\psi_{+,n}(f)$} \BlankLine %
	$y_0 := \frac{-f_0}{2a}$

            \For{$i := 1\ \mathrm{\mathbf{to}}\ n-1$}
            {
              $y_i := \displaystyle \frac{i y_{i-1} - f_i}{ 2a \cdot (2i{+}1)}$\,\;
            }
            \KwRet{$\sum \limits _{i=0}^{n-1} {y_i t^i}$\;}
          }
        \end{algorithm}
      \end{footnotesize}
    }
  \end{center}
\end{figure}

We now move to the effective computation of $\psi_+$. Following the proof of
Proposition~\ref{prop:linear}, we directly get Algorithm~\ref{algo:LinSolve}
(\texttt{LinDiffSolve}), whose numerical stability is studied in
Proposition~\ref{prop:stablinear} hereafter.  Before stating it, let us recall
that $e$ denotes the ramification index of $K$ over $\QQ$ and that, given
$N \in \frac 1 e \, \N$, we use the notation $O(2^N)$ to refer to a quantity
which is divisible by $\pi^{eN}$.

\begin{prop}
\label{prop:stablinear}
Let $n \in \N$, $N \in \frac 1 e \, \N$ and $f \in \OKt/(t^n)$.
We assume that $\psi_{+,n}(f) \in \OKt/(t^n)$. Then,
when $\textrm{\tt LinDiffSolve}(a,f,n)$ is run with fixed point arithmetic
at precision $O(2^M)$ with $M = N + \lfloor \log _2(n{+}1) \rfloor + 1$,
all the performed computations are done in $\OK$ and the result is
correct at precision $O(2^N)$.
\end{prop}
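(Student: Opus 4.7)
\medskip

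\noindent\textbf{Proof plan.}
Let $y_i \in \OK$ denote the exact coefficients of $\psi_{+,n}(f)$, let $\tilde y_i \in \OK/\pi^{eM}\OK$ denote the values actually produced by \texttt{LinDiffSolve}, and set $\delta_i := \tilde y_i - y_i$. The strategy is to reduce the precision analysis to a second, clean application of Proposition~\ref{prop:linear}, this time applied not to $f$ itself but to the sequence of rounding errors.

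First I would account for the local error introduced by each division. The only operations losing precision are the divisions by $2a$ and by $2a(2i{+}1)$, whose valuations are all equal to $1$. By the specification of division in the fixed point model, each such operation returns a representant which differs from the true quotient by a quantity $\eta_i$ with $\val(\eta_i) \geq M - 1$. Writing out the step executed by the algorithm and subtracting the exact recurrence satisfied by the $y_i$'s, I get
\begin{equation*}
   i\, \delta_{i-1} - 2a(2i{+}1)\, \delta_i = -2a(2i{+}1)\, \eta_i \qquad (i \geq 1),
\end{equation*}
together with the initial relation $-2a\, \delta_0 = -2a\, \eta_0$. This is exactly the recurrence that characterizes $\psi_+$ applied to the series $g := -\sum_{i \geq 0} 2a(2i{+}1)\, \eta_i \, t^i$, so $\delta = \psi_+(g)$ in $\OK/\pi^{eM}\OK\llbracket t \rrbracket/(t^n)$. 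Since $\val(2a(2i{+}1)) = 1$, every coefficient of $g$ has valuation $\geq M$.

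At this point, Proposition~\ref{prop:linear} applied to $g$ yields, for every $i \leq n{-}1$,
\begin{equation*}
   \val(\delta_i) \geq M - \lfloor \log_2(i{+}1) \rfloor - 1 \geq M - \lfloor \log_2(n{+}1) \rfloor - 1 = N,
\end{equation*}
which is the target precision statement. The fact that all computations remain inside $\OK$ follows immediately: the true values $y_i$ lie in $\OK$ by hypothesis, and the errors $\delta_i$ have nonnegative valuation (for $M \geq \lfloor \log_2(n{+}1) \rfloor + 1$), hence so do the stored representants $\tilde y_i$.

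The only remaining point, and the one I expect to require the most care, is to check that no division in the algorithm raises an error, \emph{i.e.} that at each step the dividend has valuation $\geq 1$. I would handle this inductively in parallel with the error bound above: for $i = 0$ the dividend $-f_0$ equals $2a\, y_0$ and thus has valuation $\geq 1$; for $i \geq 1$, the dividend $i \tilde y_{i-1} - f_i$ equals $2a(2i{+}1)\, y_i + i\, \delta_{i-1}$, and the valuations of both summands are $\geq 1$ as soon as the bound $\val(\delta_{i-1}) \geq M - \lfloor \log_2(i) \rfloor - 1 \geq 1$ has been established at the previous step (which is ensured by the choice $M = N + \lfloor \log_2(n{+}1) \rfloor + 1$). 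This closes the induction and completes the proof.
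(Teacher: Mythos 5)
Your proof is correct and follows essentially the same route as the paper's: the paper observes that the computed output is the exact value $\psi_{+,n}(f+h)$ for a perturbation $h$ with coefficients of valuation $\geq M$, and then invokes Proposition~\ref{prop:linear} on $h$, which is exactly your argument phrased at the level of the ODE rather than the coefficient recurrence (your $g$ is the paper's $h$). Your additional check that no division raises an error is a detail the paper leaves implicit, but it does not change the substance of the argument.
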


\begin{proof}
The fact that all the computations stay within $\OK$ is a direct
consequence of the assumption that $\psi_{+,n}(f)$ has coefficients
in $\OK$.
Let $y$ be the output of $\textrm{\tt LinDiffSolve}(a,f,n)$.
It follows from the definition of fixed point arithmetic that $y$ is
solution of
$$t (t{-}4a) y' +  (t{-}2a) y = f + h$$
for some $h \in \pi^{eM} \OKt/(t^n)$. Consequently $y = \psi_{+,n}(f+h) =
\psi_{+,n}(f) + \psi_{+,n}(h)$.
On the other hand, Proposition~\ref{prop:linear}
(applied with $h$) shows that $\psi_{+,n}(h) \in \pi^{eN} \OKt/(t^n)$. Hence
$y \equiv \psi_{+,n}(f) \pmod{(\pi^{eN}, t^n)}$, which exactly means
that $y$ is correct at precision $O(2^N)$.
\end{proof}

\begin{remark}
\label{rem:lindiffsolveval}
It follows from the specifications on our computation model that, when
the $m$ first coefficients of $f$ vanish, the $m$ first coefficients of
the output of $\textrm{\tt LinDiffSolve}(a,f,n)$ vanish as well.
\end{remark}

\subsection{The algorithm}
\label{subsec:algo}

We go back to the nonlinear differential equation \eqref{eq:nonlinear} and
assume the hypothesis \HU.  In this setting, we will construct the solution by
successive approximations using a Newton scheme.  In order to proceed, we
suppose that we are given $z_m \in \Kt$ for which Eq.~\eqref{eq:nonlinear} is
satisfied modulo $t^m$. We look for a more accurate solution $z_n$ of the form
$z_n = z_m + h$ with $h \in t^m \Kt$. We compute
\begin{align*}
U(t) \cdot z'_n{}^2 = U(t)\cdot (z'_m + h')^2
 & \equiv U(t)\cdot (z'_m{}^{\!2} + 2\, z'_m h') \pmod{t^{2m-1}}\,, \\
V(z_n) = V(z_m + h)
 & \equiv V(z_m) + V'(z_m) \cdot h  \pmod{t^{2m-1}}\,.
\end{align*}
Identifying both terms, we obtain the relation
\begin{equation}
\label{eq:h}
2 \: U(t)\: z'_m \cdot h' - V'(z_m) \cdot h
\:\equiv\: V(z_m) -U(t)\cdot z'_m{}^{\!2}  \pmod{t^{2m-1}}\,.
\end{equation}
By assumption, we know that $U(t) \cdot z'_m{}^{\!2} \equiv V(z_m) \pmod
{t^m}$. Differentiating this equation and dividing by $z'_m$, we
obtain $V'(z_m) \equiv U'(t) \:z'_m + 2 \:U(t)\: z''_m \pmod{t^{m-1}}$.
Plugging this congruence in Eq.~\eqref{eq:h}, we find
$$
2 \: U(t)\: z'_m \cdot h' - \Big( U'(t) \:z'_m + 2 \:U(t)\: z''_m \Big) \cdot h
\:\equiv\: V(z_m) -U(t)\cdot z'_m{}^{\!2}  \pmod{t^{2m-1}}\,.
$$
Replacing $U(t)$ by $t(t{-}4a)\:u(t)^2$ thanks to Hypothesis \HU, and setting
$h = z'_m u\cdot y$, we end up with the differential equation in $y$
\begin{displaymath}
t(t{-}4a)\: y' - (t{-}2a)\: y
\:\equiv\: f_n \pmod{t^{2m-1}} \quad \text{with} \quad f_n = \frac{1}{2
  u(t)^3} \left(\frac{V(z_m)}{z'_m{}^{\!2}} - U(t) \right).
\end{displaymath}
By the results of Section~\ref{subsec:linear}, we derive
$h \equiv z'_m u \cdot \psi_-(f_n) \pmod{t^{2m-1}}$. Repeating the above
calculations in the reverse direction, we obtain the next proposition.

\begin{prop}
  \label{prop:Newton}
  We assume \HU.
  Let $m>1$ be an integer and let $z_m \in \Kt$ be a solution of
  Eq.~\eqref{eq:nonlinear} modulo $t^m$. Then
\begin{equation}
\label{eq:oldnewton}
z_m + z'_m u \cdot
  \psi_-\left(\frac{1}{2 u(t)^3}
  \left(\frac{V(z_m)}{z'_m{}^{\!2}} - U(t) \right)\right)
\end{equation}
  is a solution of Eq.~\eqref{eq:nonlinear} modulo $t^{2m-1}$.
\end{prop}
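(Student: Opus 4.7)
The plan is to verify the claim by running, in reverse, the chain of reductions sketched in the paragraphs preceding the statement. Define $h := z'_m \, u \cdot \psi_-(f_n)$ with $f_n$ as in the statement, set $z_n := z_m + h$, and confirm that $U \cdot z'_n{}^2 \equiv V(z_n) \pmod{t^{2m-1}}$.

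First I would control precisions. The hypothesis $U z'_m{}^2 \equiv V(z_m) \pmod{t^m}$, together with the fact that $z'_m$ is a unit in $\Kt$ (since $z_m = z_1 t + O(t^2)$ with $z_1 \neq 0$ by Proposition~\ref{prop:solnonlinear}), implies that $f_n$ vanishes to order at least $m$ in~$t$. An inspection of the recursion derived in Proposition~\ref{prop:linear} shows that $\psi_-$ sends $t^m \Kt$ into $t^m \Kt$, hence $h \in t^m \Kt$. Since $U$ has $t$-adic valuation $1$ by \HU, this gives $U \cdot (h')^2 \in t^{2m-1}\Kt$, and the higher-order Taylor remainders $V^{(k)}(z_m)\, h^k/k!$ with $k \geq 2$ lie in $t^{2m}\Kt$. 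The first-order expansions
\begin{align*}
U \cdot z'_n{}^2 &\equiv U z'_m{}^2 + 2\, U z'_m h' \pmod{t^{2m-1}}, \\
V(z_n) &\equiv V(z_m) + V'(z_m) \cdot h \pmod{t^{2m-1}}
\end{align*}
therefore hold, so the claim reduces to verifying
$2 U z'_m h' - V'(z_m)\, h \equiv V(z_m) - U z'_m{}^2 \pmod{t^{2m-1}}$.

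Differentiating $U z'_m{}^2 \equiv V(z_m) \pmod{t^m}$ and dividing by $z'_m$ yields $V'(z_m) \equiv U' z'_m + 2 U z''_m \pmod{t^{m-1}}$; multiplied by $h \in t^m\Kt$, the resulting error is absorbed modulo $t^{2m-1}$, so it suffices to establish the \emph{exact} identity in $\Kt$
\begin{equation*}
2\, U z'_m h' - (U' z'_m + 2 U z''_m)\, h = V(z_m) - U z'_m{}^2.
\end{equation*}
By construction $y := \psi_-(f_n)$ satisfies \Em with right-hand side $f_n$, i.e., $t(t{-}4a)\, y' - (t{-}2a)\, y = f_n$; multiplying by $2 u^3 z'_m{}^2$ rewrites the right-hand side as $V(z_m) - U z'_m{}^2$ by definition of $f_n$. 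The main obstacle, and essentially the only nontrivial step, is the algebraic verification that substituting $U = t(t{-}4a) u^2$ and $h = z'_m u \cdot y$ (hence $h' = z''_m u\, y + z'_m u'\, y + z'_m u\, y'$) makes $2 u^3 z'_m{}^2 \bigl[t(t{-}4a)\, y' - (t{-}2a)\, y\bigr]$ coincide with $2 U z'_m h' - (U' z'_m + 2 U z''_m)\, h$. Expanding the latter and using $U' = (2t{-}4a)u^2 + 2 t(t{-}4a)\, u u'$, the cross-terms involving $z''_m u y$ cancel exactly, and the remaining factors combine to $2 u^3 z'_m{}^2\bigl[t(t{-}4a)\, y' - (t{-}2a)\, y\bigr]$, producing the required identity.
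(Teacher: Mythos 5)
Your proposal is correct and takes essentially the same route as the paper: the paper derives the formula by the forward reduction (first-order expansion, substitution of the differentiated equation for $V'(z_m)$, the change of function $h = z'_m u\, y$) and then asserts the proposition by ``repeating the calculations in the reverse direction,'' which is exactly what you carry out. You also supply the details the paper leaves implicit --- the $t$-adic order bookkeeping showing $f_n, h \in t^m\Kt$, and the exact cancellation of the $z''_m$ and $u'$ cross-terms --- and these check out.
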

It would be reasonable to expect that Proposition~\ref{prop:Newton} could be
easily turned into an algorithm that solves the nonlinear differential
equation \eqref{eq:nonlinear}. However, for several reasons (related to
the precision analysis), we shall modify a bit our Newton iteration.
From now on, we assume \HV, set $\lambda = b a^{-1} \in \OK^\times$.
For $z_m \in t\Kt/(t^m)$, we write
\begin{equation}
  \label{eq:ChangeOfIteration}
  z_m = \lambda t + t(t{-}4a)\, q_m
\end{equation}
with $q_m \in \Kt/(t^{m-1})$.
So, $z_m$ is a solution of Eq.~\eqref{eq:nonlinear}
modulo $t^m$ if and only if $q_m$ satisfies
\begin{equation}
  \label{eq:nonlineartilde}
  W(t, q_m) \equiv u(t)^2 z'_m{}^{\!2} \pmod {t^{m-1}}\,.
\end{equation}
where $W$ is defined by
\begin{displaymath}
  W(t,x)= \big(\lambda + (t{-}4a)x \big ) \cdot
          \big ( \lambda +tx \big) \cdot
          v^2 \big ( \lambda t + t(t{-}4a)x \big)
  \in \OK \llbracket t, x \rrbracket\,.
\end{displaymath}
Rewriting Proposition~\ref{prop:Newton}, we obtain the following corollary.
\begin{cor}
\label{cor:Newton}
We assume \HU and \HV.  Let $m>1$ be an integer and let $q_m$ be a solution
of Eq.~\eqref{eq:nonlineartilde} modulo $t^{m}$. Then
\begin{displaymath}
  q_m +  z'_m u \cdot
  \psi_+\left(\frac{1}{2 u(t)^3}
    \left(\frac{W(q_m)}{z'_m{}^{\!2}} - u(t)^2 \right)\right)
\end{displaymath}
is a solution of Eq.~\eqref{eq:nonlineartilde} modulo $t^{2m-1}$.
\end{cor}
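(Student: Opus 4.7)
The plan is to show that Corollary~\ref{cor:Newton} is a direct reformulation of Proposition~\ref{prop:Newton} via the change of variables~\eqref{eq:ChangeOfIteration} that replaces $z_m$ by $q_m$, combined with Lemma~\ref{lem:psipm} that relates $\psi_+$ and $\psi_-$.

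I would first establish the key algebraic identity
\[
V(z_m) = t(t{-}4a) \cdot W(t, q_m).
\]
This follows from a direct computation: writing $z_m = t\bigl(\lambda + (t{-}4a)\,q_m\bigr)$ and using the relation $b = \lambda a$, one gets $z_m - 4b = (t{-}4a)\bigl(\lambda + t\,q_m\bigr)$. Since $V(z) = z(z{-}4b)\,v(z)^2$ by~\HV, these two factorizations contribute the factors $t$ and $(t{-}4a)$, and what remains matches exactly the definition of $W(t,q_m)$. The availability of this factorization is really the reason for the specific choice $\lambda = b a^{-1}$.

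Combining this identity with \HU, namely $U(t) = t(t{-}4a)\,u(t)^2$, the argument of $\psi_-$ appearing in the Newton formula of Proposition~\ref{prop:Newton} factors as
\[
\frac{1}{2u(t)^3}\left(\frac{V(z_m)}{z'_m{}^{\!2}} - U(t)\right)
= t(t{-}4a) \cdot \frac{1}{2u(t)^3}\left(\frac{W(q_m)}{z'_m{}^{\!2}} - u(t)^2\right).
\]
Applying Lemma~\ref{lem:psipm} to the right-hand side converts $\psi_-$ acting on $t(t{-}4a)\cdot g$ into $t(t{-}4a)\cdot \psi_+(g)$, so the Newton correction $h$ produced by Proposition~\ref{prop:Newton} factors as $h = t(t{-}4a)\cdot\delta q$ with
\[
\delta q = z'_m u \cdot \psi_+\!\left(\frac{1}{2u(t)^3}\left(\frac{W(q_m)}{z'_m{}^{\!2}} - u(t)^2\right)\right).
\]
By~\eqref{eq:ChangeOfIteration}, the improved approximation $z_m + h$ equals $\lambda t + t(t{-}4a)(q_m + \delta q)$, and the same factorization used above shows that the improved precision of Proposition~\ref{prop:Newton} for~\eqref{eq:nonlinear} translates directly into the stated precision for~\eqref{eq:nonlineartilde}.

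I do not anticipate any real obstacle: the argument is essentially bookkeeping. The only slightly delicate point is the factorization $V(z_m) = t(t{-}4a)\,W(t,q_m)$, whose existence hinges on the specific choice $\lambda = b a^{-1}$; once this identity is in hand, Lemma~\ref{lem:psipm} does all the remaining work and Proposition~\ref{prop:Newton} takes care of the precision increase.
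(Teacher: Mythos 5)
Your proof is correct and follows exactly the paper's (very terse) argument: substitute the change of variables \eqref{eq:ChangeOfIteration} into the Newton formula \eqref{eq:oldnewton} of Proposition~\ref{prop:Newton} and convert $\psi_-$ into $\psi_+$ via Lemma~\ref{lem:psipm}. The factorization $V(z_m)=t(t{-}4a)\,W(t,q_m)$, which you rightly identify as the one point requiring verification, is precisely the content of the substitution, so nothing is missing.
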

\begin{proof}
  The formula is easily obtained by plugging Eq.~\eqref{eq:ChangeOfIteration}
  in Eq.~\eqref{eq:oldnewton}, and using Lemma~\ref{lem:psipm}.
\end{proof}

With Corollary~\ref{cor:Newton} and a small optimization consisting in
integrating the computation of $(z'_m)^{-2}$ in our Newton scheme,
we get Algorithm~\ref{algo:DiffSolve} (\texttt{DiffSolve}) and
Algorithm~\ref{algo:IsoSolve} (\texttt{IsoSolve}).
\begin{figure}[htbp]
  \begin{center}
    \parbox{0.85\linewidth}{%
      \begin{footnotesize}\SetAlFnt{\small\sf}%
        \begin{algorithm}[H]%
          \caption{Non linear differential equation solver} %
          \label{algo:DiffSolve}%
          \SetKwInOut{Input}{Input} %
          \SetKwInOut{Output}{Output} %
          \SetKwProg{DiffSolve}{\texttt{DiffSolve}}{}{}%
          \DiffSolve{$(a,\, u,\, u^2,\, u^{-3},\, b,\, v^2,\, n)$}{
            \Input{$u, u^2, v^2 \bmod t^n,\ u, u^{-3} \bmod t^{\lceil n/2
                \rceil},\ a$ and $b$ satisfying \HU and \HV}
            \Output{$q_n$ ${}\bmod t^n$, $(z'_n)^{-2} \bmod t^{\lceil n/2 \rceil}$}
            \BlankLine %
            \BlankLine %
            \If{ $n \leq 1$}%
            {%
              \KwRet{$v_1u_1^{-1} \big ( b a^{-1} - 4a \big )^{-1} \bmod t^n,\, 0 \bmod t^{n-1}$}\;%
            }%
            {}
            $ m := \lceil \frac{n{-}1}2 \rceil $\;
            $q_m,\, r_m := \texttt{DiffSolve}(a,\, u,\, u^2,\, u^{-3},\, b,\, v^2,\, m)$%
            \tcp*{recursive call}
            $z_m:= ba^{-1}t + t(t-4a)q_m \bmod t^{n+1}$\;

            $ w_n := z'_m{}^{\!2} \bmod t^{n}$				%
            \tcp*{1 coeff.\,lost}
            $ s_n := u^2\cdot w_n - W\circ q_m\ \bmod t^{n}$%
            \tcp*{$s_n \bmod t^{m} = 0$}
            $r_n :=  r_{m} \cdot (2 - r_{m}\cdot w_n) \bmod t^{n-m}$%
            \tcp*{$(z_n')^{-2}$ (Newton iter.)}
            $f_n := 2^{-1} \,s_n \cdot r_n \cdot u^{-3} \bmod  t^n$%
            \tcp*{The argument of $\psi_+$}

            $y_n := \texttt{LinDiffSolve}(a,\, f_n, \, n)$%
            \tcp*{$y_n \bmod t^{m} = 0$}
            \KwRet{$q_m + z'_m \cdot u \cdot y_n \bmod t^{n}$,\ $r_n$}%
          }
       \end{algorithm}
     \end{footnotesize}
   }
 \end{center}
\end{figure}

\begin{figure}[htbp]
  \begin{center}
    \parbox{0.85\linewidth}{%
      \begin{footnotesize}\SetAlFnt{\small\sf}%
        \begin{algorithm}[H]%
          \caption{Isogeny differential equation solver} %
          \label{algo:IsoSolve}%
          \SetKwInOut{Input}{Input} %
          \SetKwInOut{Output}{Output} %
            \BlankLine %
          \SetKwProg{IsoSolve}{\texttt{IsoSolve}}{}{}%

          \IsoSolve{$(U,\,V,\,n)$}{
            \Input{$U, V \bmod t^n$ satisfying Assumptions \HU and \HV}
            \Output{the solution ${z_n}$ ${}\bmod t^n$ of Eq.~\eqref{eq:nonlinear}}
            \BlankLine %
            \BlankLine %
            Compute $a$, $b$, $U_0 \bmod t^{n-1}$ and $V_0 \bmod t^{n-1}$
            \tcp*{$U_0=u^2$ and $V_0=v^2$}

            Compute $u \bmod t^{\lceil (n-1)/2 \rceil}$ and $u^{-3} \bmod t^{\lceil (n-1)/2 \rceil}$\;
            $z := \texttt{DiffSolve}(a,\, u,\, u^2,\, u^{-3},\, b,\, v^2,\, n-1)$\;
            \KwRet{ $ba^{-1}t + t(t-4a)\, z \bmod t^{n}$ }%
          }
       \end{algorithm}
     \end{footnotesize}
   }
 \end{center}
\end{figure}
\smallskip

If we could work at infinite $p$-adic precision, it would be clear that
Algorithm~\ref{algo:IsoSolve} is correct. The next theorem shows that its
correction still holds in the fixed point arithmetic model.

\begin{thm}
\label{thm:stabnonlinear}
Let $n \in \N$, $N \in \frac 1 e \, \N$ and $U, V \in \Kt$. We assume \HU and \HV
and that the unique nonzero solution of Eq.~\eqref{eq:nonlinear} has
coefficients in $\OK$. Then,
when $\textrm{\tt IsoSolve}(U,V,n)$ runs with fixed point arithmetic
at precision $O(2^M)$ with $M = \max(N,3) +
\lfloor \log _2(n) \rfloor + 2$, all the performed computations
are done in $\OK$ and the result is correct at precision $O(2^N)$.
\end{thm}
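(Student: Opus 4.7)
The plan is to establish the theorem by induction on the recursion depth of \texttt{DiffSolve}, tracking simultaneously the integrality of every intermediate series and the 2-adic precision of the output. The correctness in exact arithmetic already follows from Corollary~\ref{cor:Newton} composed with the scaling $z_n = ba^{-1}t + t(t{-}4a)q_n$; the entire difficulty lies in the precision analysis and in ruling out forbidden divisions.

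I will first check that each intermediate quantity $q_m$, $z_m$, $w_n$, $s_n$, $r_n$, $f_n$, $y_n$ stays in $\OKt$, so that no division raises an error. The standing assumption that the true nonzero solution of Eq.~\eqref{eq:nonlinear} lies in $\OKt$ forces $q\in\OKt$, hence all the returned $q_m$'s. Since $z'_m(0)=\lambda=ba^{-1}\in\OK^\times$, the series $z'_m$ is a unit in $\OKt$, so the Newton iteration $r_n \leftarrow r_m(2-r_m w_n)$ converges in $\OKt$ to $(z'_n)^{-2}$. The delicate point is the integrality of $f_n = 2^{-1}s_n r_n u^{-3}$; here I will argue that the exact $y_n = \psi_+(f_n)$ must be in $\OKt$ (it is the Newton correction from one integral approximate solution to the next), and then invert this relation through Proposition~\ref{prop:linear} to conclude that $f_n$ itself lies in $\OKt$. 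This is what makes the hypotheses of Proposition~\ref{prop:stablinear} applicable.

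For the precision part, let $P(n)$ be a lower bound on the 2-adic precision of the output of \texttt{DiffSolve} at size $n$. I will prove inductively that $P(n) \geq M - \lfloor\log_2 n\rfloor - c$ for a small absolute constant $c$. At a single level, polynomial arithmetic and the Newton step for $(z'_n)^{-2}$ are lossless in fixed-point arithmetic; the division by $2$ in the formation of $f_n$ costs one digit; and the call $\texttt{LinDiffSolve}(a,f_n,n)$ costs at most $\lfloor\log_2(n{+}1)\rfloor+1$ digits, by Proposition~\ref{prop:stablinear}.

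The main obstacle I anticipate, and the point where the design of Algorithm~\ref{algo:DiffSolve} is really exploited, is preventing these $\log n$ losses from compounding across the $\Theta(\log n)$ recursive levels. The key observation is that $s_n\equiv 0\pmod{t^m}$ precisely because $q_m$ already satisfies Eq.~\eqref{eq:nonlineartilde} modulo $t^m$; hence $f_n\equiv 0\pmod{t^m}$, and Remark~\ref{rem:lindiffsolveval} gives $y_n\equiv 0\pmod{t^m}$. Consequently, the coefficients of $q_n = q_m + z'_m u\cdot y_n$ in positions $[0,m{-}1]$ are inherited verbatim from $q_m$ with precision $P(m)$, while only the new coefficients in positions $[m,n{-}1]$ absorb the current level's loss. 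Errors in $q_m$ propagate to $f_n$ only through multiplications by units (namely $u^2$, $u^{-3}$, $r_m$) and through evaluation of the polynomial $W$, so their valuation is preserved up to a constant. Combining this with the single $\lfloor\log_2 n\rfloor+1$ loss from \texttt{LinDiffSolve} and the constant losses coming from the $2^{-1}$ and the boot-up of the Newton iteration for the inverse (which is where the $\max(N,3)$ buffer is needed to ensure the initial iterate $r_n$ has enough precision to be useful) yields precisely $M\geq\max(N,3)+\lfloor\log_2 n\rfloor+2$, closing the induction at the top level of the recursion.
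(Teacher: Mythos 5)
Your integrality argument and your observation that $y_n\equiv 0\pmod{t^m}$ (so that the low-order coefficients of $q_n$ are inherited verbatim from $q_m$) do match what the paper does. But the core of your precision analysis has a genuine gap: the forward induction $P(n)\geq M-\lfloor\log_2 n\rfloor-c$ does not close. At each level, the error already carried by $q_m$ (of valuation $v_m$, say) propagates into \emph{all} coefficients of $f_n$ in positions $[m,n-1]$ --- through $z_m'^{2}$, $W\circ q_m$ and $r_m$ --- with valuation about $v_m-1$; the call to \texttt{LinDiffSolve} then amplifies \emph{that propagated error}, not only the fresh rounding error of the current level, by a further $\lfloor\log_2(n{+}1)\rfloor+1$ digits (this is Proposition~\ref{prop:linear} applied to the perturbation of $f_n$). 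So the recurrence you actually obtain is $v_n\geq v_m-\lfloor\log_2 n\rfloor-O(1)$, which compounds over the $\Theta(\log n)$ recursion levels to a total loss of $O(\log^2 n)$ --- precisely the Lercier--Sirvent bound that the theorem is designed to beat. Your statement that ``errors in $q_m$ propagate to $f_n$ only through multiplications by units, so their valuation is preserved'' is true as far as it goes, but it is the subsequent passage through $\psi_+$ at every level of the recursion that you have not controlled, and nothing in your invariant prevents it from eating $\lfloor\log_2 n\rfloor$ digits per level.

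The paper avoids this entirely by a backward error analysis. It first proves by induction (the claim~\eqref{eq:algoproof}) that the \emph{computed} $q_n$ satisfies the discretized equation exactly modulo $(t^{n-1},2^M)$, i.e.\ at full working precision with no degradation across levels; this is a statement about residuals, not about distance to the true solution. It then reinterprets the computed $z_n$ as the \emph{exact} solution attached to a perturbed datum $g_n$ with $\Vert g-g_n\Vert_{F_{n-1}}\leq 2^{-M+1}$, and converts this into the forward bound $\Vert z_g-z_n\Vert_{F_n}\leq 2^{-N}$ in a single step, using the fact that the solution map $\varphi_n$ is an isometry on small balls (Proposition~\ref{prop:optimalprecision} and Remark~\ref{rmk:2}, established via Lemmas~\ref{lem:isometry} and~\ref{lem:Lambda} and the differential-precision results of Caruso--Roe--Vaccon). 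The single $\lfloor\log_2 n\rfloor$ term in $M$ thus comes from one comparison of the norms $\Vert\cdot\Vert_{E_{n-1}}$ and $\Vert\cdot\Vert_{F_{n-1}}$, not from summing per-level losses; and the $\max(N,3)$ is needed to normalize the square root $g_n$ so that $\Vert g+g_n\Vert=1/2$, not for the start-up of the inverse Newton iteration as you suggest. To repair your proof you would need either this backward/isometry argument or a substantially stronger, position-dependent error invariant capable of absorbing the repeated applications of $\psi_+$; as written, the induction does not yield the claimed bound.
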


We delay the proof of Theorem~\ref{thm:stabnonlinear} to
Section~\ref{subsec:precision-analysis}. Let us first study the complexity of
the Algorithms~\ref{algo:DiffSolve} and~\ref{algo:IsoSolve}.
We recall that $\cM(n)$ denotes the algebraic complexity
of a feasible algorithm that computes the product of two polynomials on degree
$n$.  Similarly, given a fixed series $W \in \Kt$, we define $\cC_W(n)$ as the
algebraic complexity of an algorithm computing the composite $W \circ z$
modulo $t^n$. In our case of interest, $W$ turns out to be a polynomial of
degree $4$ and $\cC_W(n) = O(\cM(n))$. More generally, we observe that, when
$W$ is a polynomial of degree $d$, we have $\cC_W(n) = O(d \cM(n))$. In what
follows, we assume that $\cC_W$ satisfies the superadditivity hypothesis,
\emph{i.e.} that $\cC_W(n+n') \geq \cC_W(n) + \cC_W(n')$ for all integers $n$
and $n'$.

\begin{prop}
\label{prop:complexity}
When it is called on the input $(U, V, n)$, the algorithm
\textrm{\tt IsoSolve} performs at most $O\big(\cM(n) + \cC_W(n)\big)$
operations in $K$.
\end{prop}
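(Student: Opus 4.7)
The plan is to track the cost of \texttt{IsoSolve} in two stages: first the preprocessing outside the recursion, and then the cost incurred by one call of \texttt{DiffSolve} relative to its recursive call at half the size.

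For the preprocessing in \texttt{IsoSolve}, extracting $a$, $b$, $U_0$ and $V_0$ from $U$ and $V$ amounts to polynomial divisions by $t(t{-}4a)$ and $t(t{-}4b)$, which cost $O(n)$ each. Recovering $u$ from $u^2 = U_0$ is a square-root extraction in $\OKt$: since $u(0) \in \OK^\times$, this is computed by standard Newton iteration at a cost of $O(\cM(n))$, and $u^{-3}$ is obtained similarly in $O(\cM(n))$. So the whole preprocessing fits in $O(\cM(n))$.

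Now consider one call of \texttt{DiffSolve} at size $n$, setting $m = \lceil (n{-}1)/2 \rceil$. Apart from the recursive call at size $m$, the steps are: forming $z_m$ and its derivative $z'_m$ modulo $t^{n+1}$ (linear in $n$); computing $w_n = (z'_m)^2$ and the products $u^2 \cdot w_n$, $r_m(2{-}r_m w_n)$, $s_n r_n u^{-3}$, and $z'_m u y_n$, each a polynomial multiplication modulo $t^n$ of cost $O(\cM(n))$; evaluating the composite $W \circ q_m$ modulo $t^n$, which costs $O(\cC_W(n))$; and finally one call to \texttt{LinDiffSolve}$(a, f_n, n)$ which, being just the first-order linear recurrence of Algorithm~\ref{algo:LinSolve}, uses $O(n)$ operations in $K$ and is absorbed into $O(\cM(n))$. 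Writing $T(n)$ for the total cost, we obtain
\begin{equation*}
T(n) \leq T(m) + c \cdot \bigl(\cM(n) + \cC_W(n)\bigr)
\end{equation*}
for some absolute constant $c$.

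Finally, I unfold the recursion. The superadditivity hypotheses give $2\cM(m) \leq \cM(2m) \leq \cM(n)$, and similarly for $\cC_W$, so $\cM(m) + \cC_W(m) \leq \tfrac{1}{2}\bigl(\cM(n) + \cC_W(n)\bigr)$. Summing the geometric series over the $O(\log n)$ recursive levels yields $T(n) = O(\cM(n) + \cC_W(n))$, which is the desired bound. The only subtlety is recognizing that the composition $W \circ q_m$ is the one step whose cost is naturally captured by $\cC_W$ rather than $\cM$, and that all other apparently expensive operations (square roots, inverses, Newton refinement of $r_n$) reduce to plain polynomial multiplications; once this accounting is in place, the geometric summation is routine.
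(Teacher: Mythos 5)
Your proof is correct and follows essentially the same route as the paper: bound the precomputation by $O(\cM(n))$, write the recurrence $T(n) \leq T(\lceil (n{-}1)/2\rceil) + O(\cM(n) + \cC_W(n))$ for \texttt{DiffSolve}, and unfold it using the superadditivity of $\cM$ and $\cC_W$. The only cosmetic difference is in how the precomputation is organized (the paper goes through $u^{-1}$ via the inverse-square-root iteration and then derives $u$ and $u^{-3}$ by multiplication and cubing), which does not change the $O(\cM(n))$ bound.
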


\begin{proof}
  The calculation of $a$ and $b$ is done with the Hensel
  lifting algorithm applied to $U(t)$.  The series $u(t)^2$ and $v(t)^2$ are then obtained
  by Euclidean division by $t(t{-}4a)$. Then, the series $u(t)^{-1}$ can be
  computed by the Newton iteration
  $r \mapsto r \cdot (3 - u^2 r^2)/2$. Finally $u(t)$ is
  obtained by multiplying $u(t)^{-1}$ by $u(t)^2$ and $u(t)^{-3}$ is
  obtained by cubing $u(t)^{-1}$. The total complexity of the
  precomputation steps is then at most $O(\cM(n))$ operations in $K$.

  Examining the core of Algorithm~\ref{algo:DiffSolve}, we find that
  its algebraic complexity $\cT(n)$ satisfies the relation
  $$\cT(n) \leq \cT\left(\left\lceil\frac{n+1} 2\right\rceil\right) +
    O\big(\cM(n) + \cC_W(n)\big)\,,$$
  the complexity of \texttt{LinDiffSolve} being linear in $n$ and hence
  dominated by $\cM(n)$. Solving the recurrence and using the
  superadditivity of $\cM$ and $\cC_W$, we find
  $\cT(n) = O\big(\cM(n) + \cC_W(n)\big)$ which is also the complexity of Algorithm~\ref{algo:IsoSolve}.
\end{proof}

\begin{cor}
When executed with fixed point arithmetic at precision $O(2^M)$,
the bit complexity of the algorithm \textrm{\tt IsoSolve} is
$O\big((\cM(n) + \cC_W(n))\cdot \cA(K; M)\big)$.
\end{cor}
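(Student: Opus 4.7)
The plan is essentially to convert the algebraic complexity bound of Proposition~\ref{prop:complexity} into a bit complexity bound by multiplying by the per-operation cost $\cA(K;M)$. The content of the proof lies in verifying that this conversion is legitimate, which reduces to checking that every arithmetic step of \texttt{IsoSolve} fits within the specifications of the fixed point arithmetic model at precision $O(2^M)$.

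First I would invoke Theorem~\ref{thm:stabnonlinear} to ensure that, when the precision $M$ is chosen as prescribed, all intermediate quantities produced by \texttt{IsoSolve} lie in $\OK$, so that they can be represented as elements of $\OK/\pi^{eM}\OK$. This is essential because the division operation in our computation model raises an error as soon as the divisor has strictly greater valuation than the dividend: the stability result rules this out for the divisions actually carried out, namely those by $2a(2i{+}1)$ inside \texttt{LinDiffSolve}, by $2u(t)^3$ in the formation of $f_n$, and the Newton-type inversions computing $r_n$ and $u(t)^{-1}$, all of whose denominators are, under \HU and \HV, units in $\OK$ or units in $\OK\llbracket t \rrbracket$.

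Once the computation model is known to apply uniformly, the bound follows immediately. Each of the $O(\cM(n) + \cC_W(n))$ arithmetic operations in $K$ counted in the proof of Proposition~\ref{prop:complexity} costs, by the very definition of $\cA(K;M)$, at most $\cA(K;M)$ bit operations. Multiplying yields the claimed bound $O((\cM(n) + \cC_W(n)) \cdot \cA(K;M))$.

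I do not expect a real obstacle. The only point worth double-checking is that the polynomial multiplications (carried out via the division-free algorithm of algebraic complexity $\cM(n)$) and the polynomial compositions $W \circ q_m$ (whose algebraic complexity $\cC_W(n)$ is counted in the superadditive model introduced before Proposition~\ref{prop:complexity}) indeed decompose into arithmetic operations in $K$ to which the cost $\cA(K;M)$ applies, as was already noted in Section~\ref{subsec:compmodel} for multiplication. Composition is handled analogously since $W$ is a polynomial in the two variables $(t,x)$ with coefficients in $\OK$, so $W \circ q_m$ modulo $t^n$ is computed through $\cC_W(n)$ additions and multiplications in $K$, each costing $\cA(K;M)$.
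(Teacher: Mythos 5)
Your proposal is correct and follows exactly the paper's argument: combine the operation count of Proposition~\ref{prop:complexity} with the guarantee of Theorem~\ref{thm:stabnonlinear} that all computations stay in $\OK$, then multiply by the per-operation cost $\cA(K;M)$. The extra verification details you include are a sound elaboration of the same one-line proof given in the paper.
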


\begin{proof}
It is a direct consequence of Proposition~\ref{prop:complexity}
(combined with the fact that the three algorithms only
performs operations in $\OK$ as promised by
Theorem~\ref{thm:stabnonlinear}).
\end{proof}

\begin{remark}\label{rmk:1}
  Since \texttt{IsoSolve} is of quasi-linear complexity in $n$, it is faster
  to compute the composition of two isogenies with
  Algorithm~\ref{algo:IsoSolve} than to compute each isogeny independently and
  then perform their compositions.
\end{remark}

\subsection{Precision analysis}
\label{subsec:precision-analysis}

The aim of this subsection is to prove Theorem~\ref{thm:stabnonlinear}.
The general scheme of our proof follows that of Lairez and Vaccon
\cite{lava16} and relies
mostly on the theory of ``differential precision'' developed by
Caruso, Roe and Vaccon in~\cite{carova14,carova15}.
Recall that the differential equation we have to solve reads
$$t(t{-}4a) \cdot u(t)^2 \cdot z'{}^2 = V(z)$$
where $a \in \OK^\times$, $u \in \OKt$ with $u(0) \in \OK^\times$ and
$V \in \OKt$ with $t$-adic valuation $1$. Letting $g(t) = u(t)^{-1}$,
the above equation can be rewritten as follows:
\begin{equation}
\label{eq:nonlinearbis}
t(t{-}4a) \cdot z'{}^2 = g(t)^2 \cdot V(z).
\end{equation}
We are going to study how the solution $z$ of the latter differential
equation behaves when $g$ varies.
By Proposition~\ref{prop:solnonlinear}, we know that
Eq.~\eqref{eq:nonlinearbis} has a unique nonzero solution $z_g \in t \Kt$
as soon as $g(0) \neq 0$.
Besides, the proof of Proposition~\ref{prop:solnonlinear} shows that
the $n{+}1$ first coefficients of $z_g$ depend only on the $n$ first
coefficients of $g$. In other words, the association $g \mapsto
z_g$ defines a function $\Omega_n \to t \Kt/(t^{n+1})$ where $\Omega_n$
is the open subset of $\Kt/(t^n)$ consisting of series with nonzero
constant term. In a similar fashion, we notice that $z'_g$ is a
well-defined series in $\Kt/(t^n)$, when $g$ belongs to~$\Omega_n$.

For a given positive integer $n$, we define
$$\begin{array}{rcl}
\varphi_n : \quad \Omega_n & \longrightarrow & \Kt/(t^n) \smallskip \\
g & \mapsto & \displaystyle \frac{z_g}{t(t{-}4a)}
\end{array}$$
(we note that $t{-}4a$ is invertible in $\Kt/(t^n)$).
It follows from the proof of Proposition~\ref{prop:solnonlinear} that
$\varphi_n$ is a polynomial function in $g(0)^{-1}$ and the coefficients
of $g$; in particular, it is locally analytic.

\begin{prop}
\label{prop:dphin}
For $g \in \Omega_n$, the differential of $\varphi_n$ at $g$ is
the function
$$\begin{array}{rcl}
d\varphi_n(g) : \quad \Kt/(t^n) & \longrightarrow & \Kt/(t^n) \\
\deltag & \mapsto &
z'_g \cdot g^{-1} \cdot \psi_{+,n}(\deltag)
\end{array}$$
where $\psi_{+,n}$ is the function defined in Section~\ref{subsec:linear}.
\end{prop}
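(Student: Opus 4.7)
The plan is to compute the differential by implicit differentiation of the identity $t(t{-}4a) \cdot (z_g')^2 = g^2 \cdot V(z_g)$ that characterizes $z_g$. Since $\varphi_n$ is a polynomial function of $g(0)^{-1}$ and the coefficients of $g$, it is locally analytic on $\Omega_n$ and its differential is computed by the usual chain and Leibniz rules. Fix $g \in \Omega_n$ and a direction $\delta g \in \Kt/(t^n)$, and write $\delta z$ for the corresponding first-order variation of $z_g$. Since $\varphi_n(g) = z_g / \bigl(t(t{-}4a)\bigr)$, proving the proposition is equivalent to showing that $\delta z \equiv t(t{-}4a) \cdot g^{-1} \cdot z_g' \cdot \psi_{+,n}(\delta g) \pmod{t^n}$.

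Differentiating the defining identity in the direction $\delta g$ yields the linear equation
\[
2\, t(t{-}4a) \, z_g' \cdot (\delta z)' \;-\; g^2 \, V'(z_g) \cdot \delta z \;=\; 2 g \, V(z_g) \cdot \delta g.
\]
To eliminate the awkward term $g^2 V'(z_g)$, I then differentiate the original identity with respect to $t$ and divide by $z_g'$, obtaining an expression for $g^2 V'(z_g)$ as a combination of $z_g'$ and $z_g''$ with explicit coefficients depending on $t$ and on $g'/g$ (any residual $V(z_g)/z_g'$ factor is rewritten using $V(z_g) = t(t{-}4a) (z_g')^2 / g^2$). After substituting back, the resulting ODE for $\delta z$ no longer involves $V$ or $V'$.

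Next, I introduce the new unknown $y$ by setting $\delta z = t(t{-}4a) \cdot g^{-1} \cdot z_g' \cdot y$. Expanding $(\delta z)'$ with Leibniz and substituting into the ODE, the terms in $z_g''$ cancel pairwise and so do the terms containing $g'/g$; what survives, after dividing by the common factor $2\, t(t{-}4a) \cdot (z_g')^2 / g$, is exactly
\[
t(t{-}4a)\, y' + (t{-}2a)\, y = \delta g,
\]
which is equation $(\mathrm{E}_+)$. Proposition~\ref{prop:linear} then gives $y = \psi_+(\delta g)$, and reducing modulo $t^n$ yields the claimed formula $d\varphi_n(g)(\delta g) = g^{-1} \cdot z_g' \cdot \psi_{+,n}(\delta g)$.

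The only delicate point is the algebraic bookkeeping in the substitution step: one must track signs carefully so that the coefficient of $y$ comes out as $+(t{-}2a)$ rather than $-(t{-}2a)$, which would correspond to $\psi_-$. The substitution itself is not ad hoc: it is essentially the one appearing in the Newton iteration of Proposition~\ref{prop:Newton} and Corollary~\ref{cor:Newton}, and the cancellation of the $g'$-terms can be seen as a differential counterpart of the relation between $\psi_+$ and $\psi_-$ established in Lemma~\ref{lem:psipm}.
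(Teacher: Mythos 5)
Your proposal is correct and follows essentially the same route as the paper: implicit differentiation of the defining identity, elimination of $g^2 V'(z_g)$ via the $t$-derivative of that identity, and a substitution reducing to one of the linear equations of Section~\ref{subsec:linear}. The only (cosmetic) difference is that you fold the factor $t(t{-}4a)$ into the substitution $\deltag z = t(t{-}4a)\, g^{-1} z'_g\, y$ so as to land directly on \Ep, whereas the paper sets $\deltag z = z'_g\, g^{-1} y$, lands on \Em with right-hand side $t(t{-}4a)\cdot\deltag$, and then invokes Lemma~\ref{lem:psipm}; the two computations are identical in content.
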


\begin{proof}
We first differentiate the function $g \mapsto z_g$. This
amounts to find a quantity $\deltaz$ varying linearly with respect
to $\deltag$ for which the relation $z_{g + \deltag} = z_g + \deltaz$
holds at order $1$. Coming back to the definitions, we are led to
the identity
$$t(t{-}4a) \cdot (z'_g + \deltaz')^2 = (g(t)+\deltag(t))^2 \cdot V(z_g
+ \deltaz)
\,+\, \text{higher order terms}.$$
Expanding this relation, we obtain the following linear differential
equation in $\deltaz$:
\begin{equation}
\label{eq:deltaz}
2 t(t{-}4a) \:z'_g \cdot \deltaz' =
 2 g(t) \:\deltag(t) \:V(z_g) + g(t)^2 \:V'(z_g) \cdot \deltaz.
\end{equation}
Observe that $z_g$ and $g$ are both invertible in $\Kt/(t^n)$. We can
then write $\deltaz = z'_g \: g^{-1} \cdot y$ for some $y \in \Kt/(t^n)$.
Performing this change of functions and making use of the relations
\begin{align*}
t(t{-}4a) (z'_g)^2 & = g^2 \: V(z_g) \\
2t(t{-}4a) z'_g z''_g + 2(t{-}2a) (z'_g)^2
& = g^2 z'_g \: V'(z_g) + 2 g g' \: V(z_g)
\end{align*}
(the second one being obtained from the first one by derivation),
Eq.~\eqref{eq:deltaz} becomes
$$t(t{-}4a) y' - (t{-}2a) y = t(t{-}4a) \cdot \deltag.$$
Therefore $y = \psi_{-,n+1}\big(t(t{-}4a) \cdot \deltag\big) =
t(t{-}4a) \cdot \psi_{+,n}(\deltag)$ thanks to Lemma~\ref{lem:psipm}.
We finally derive that
$\deltaz = t(t{-}4a) \cdot z'_g g^{-1} \cdot \psi_{+,n}(\deltag)$
and, simplifying by $t(t{-}4a)$, the proposition follows.
\end{proof}

We now need to introduce norms on $\Kt/(t^n)$.
In order to avoid confusions, we set $E_n = F_n = \Kt/(t^n)$ and use
$E_n$ (resp. $F_n$) for the domain (resp. the codomain) of our
functions. Then, for example, $\Omega_n$ will be considered as
a subset of $E_n$ and $\varphi_n$ as a function from $\Omega_n$
to $F_n$. Similarly $d\varphi_n(g)$ will be viewed as an element
of $\Hom(E_n, F_n)$.

We endow $F_n$ with the usual Gauss norm
$$\big\Vert a_0 + a_1 x + \cdots + a_{n-1} x^{n-1} \big\Vert_{F_n}
 = \max\big(|a_0|, |a_1|, \ldots, |a_{n-1}|\big).$$
On the contrary, we endow $E_n$ with the norm $\Vert f \Vert_{E_n}
= \Vert \psi_{+,n}(f) \Vert_{F_n}$. It is then clear that
$\psi_{+,n} : E_n \to F_n$ is an isometry.

\begin{lem}
\label{lem:isometry}
Let $g \in \OKt/(t^n)$. We assume $z_g \in \OKt/(t^n)$ and that both
$g(0)$ and $z'_g(0)$ are invertible in $\OK$.
Then $d\varphi_n(g) : E_n \to F_n$ is an isometry.
\end{lem}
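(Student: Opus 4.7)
The strategy is to read off the claim directly from the explicit formula for $d\varphi_n(g)$ given in Proposition~\ref{prop:dphin}, combined with the definition of the norm on $E_n$. According to that proposition,
$$d\varphi_n(g)(\deltag) \;=\; h \cdot \psi_{+,n}(\deltag), \qquad \text{where } h := z'_g \cdot g^{-1}.$$
Since $\|\deltag\|_{E_n}$ is defined to be $\|\psi_{+,n}(\deltag)\|_{F_n}$, the map $\psi_{+,n}: E_n \to F_n$ is a tautological isometry, so the problem reduces to showing that multiplication by $h$ is a Gauss-norm isometry on $F_n$.

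The plan is therefore to check that $h$ is a unit in the ring $\OKt/(t^n)$ and that its inverse $h^{-1}$ also lies in $\OKt/(t^n)$. From the hypotheses, $g \in \OKt/(t^n)$ with $g(0) \in \OK^\times$; hence the usual geometric-series (or Hensel) argument shows $g^{-1} \in \OKt/(t^n)$. Likewise, from $z_g \in \OKt/(t^n)$ we get $z'_g \in \OKt/(t^n)$, and the invertibility of $z'_g(0)$ in $\OK$ gives $(z'_g)^{-1} \in \OKt/(t^n)$. Consequently both $h = z'_g g^{-1}$ and its inverse $g (z'_g)^{-1}$ sit in $\OKt/(t^n)$.

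The remaining ingredient is the standard fact that, on $\Kt/(t^n)$ equipped with the Gauss norm, multiplication by any element $h$ which is a unit in $\OKt/(t^n)$ with inverse still in $\OKt/(t^n)$ is an isometry. Indeed, the submultiplicativity of the Gauss norm yields $\|hf\|_{F_n} \leq \|f\|_{F_n}$ (because $\|h\|_{F_n} \leq 1$) and, symmetrically, $\|f\|_{F_n} = \|h^{-1}(hf)\|_{F_n} \leq \|hf\|_{F_n}$, so the two norms agree. Combining this with the isometry $\psi_{+,n}: E_n \to F_n$ gives
$$\|d\varphi_n(g)(\deltag)\|_{F_n} = \|h \cdot \psi_{+,n}(\deltag)\|_{F_n} = \|\psi_{+,n}(\deltag)\|_{F_n} = \|\deltag\|_{E_n},$$
which is exactly the isometry statement.

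There is no real obstacle here: the content of the lemma is entirely absorbed in the careful choice of the norm on $E_n$ (so as to make $\psi_{+,n}$ an isometry by fiat) and in the verification that, under the integrality hypotheses on $g$ and $z_g$, the rational function $z'_g/g$ is a \emph{unit of $\OKt/(t^n)$}. The latter is the only place where both assumptions $g(0), z'_g(0) \in \OK^\times$ are used, and it should be stated explicitly before invoking the Gauss-norm argument.
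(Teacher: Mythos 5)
Your proof is correct and follows exactly the same route as the paper: the assumptions make $z'_g\,g^{-1}$ a unit of $\OKt/(t^n)$, so multiplication by it is a Gauss-norm isometry of $F_n$, and the conclusion follows from the explicit formula of Proposition~\ref{prop:dphin} together with the tautological isometry $\psi_{+,n}\colon E_n \to F_n$. You merely spell out the standard submultiplicativity argument that the paper leaves implicit.
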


\begin{proof}
The assumptions ensure that $g$ and $z'_g$ are invertible in
$\OKt/(t^n)$. Therefore the multiplication by $z'_g\: g^{-1}$
is an isometry of $F_n$. The lemma then follows from the
explicit formula of $d\varphi_n(g)$ given by
Proposition~\ref{prop:dphin}.
\end{proof}

We now fix a series $g \in \OKt/(t^n)$ satisfying the assumptions
of Lemma~\ref{lem:isometry}. We define $W_n$ as the open subset
of $E_n$ consisting of series $\gamma$ for which $g + \gamma \in
\Omega_n$. We introduce the two following functions:
$$\begin{array}{r@{\hspace{0.5ex}}c@{\hspace{0.5ex}}lcl}
\multicolumn{3}{r}{\theta_n : \quad W_n}
 & \longrightarrow & F_n \\
\multicolumn{3}{r}{\gamma}
 & \mapsto & \varphi_n(g + \gamma) - \varphi_n(g),
\bigskip \\
\tau _n : \quad  F_n &\times& W_n
 &\longrightarrow&  \Hom({E_n},{F_n} ) \smallskip \\
(\zeta&, & \gamma) & \mapsto & \displaystyle
\left(\deltag \:\mapsto \:
\frac{z'_g + t(t{-}4a) \zeta' + 2(t{-}2a) \zeta}{g+\gamma}
\cdot \psi_{+,n}(\deltag)\right)
\end{array}$$
It follows from Proposition~\ref{prop:dphin}
that $d\theta _n = \tau _n \circ (\theta _n , \Id)$, where $\Id$ is the
identity map on $W_n$.
We can associate to any (locally) analytic function $f$, the numerical
function
$\Lambda (f):\: \mathbb{R} \cup \{\infty\} \longrightarrow \mathbb{R} \cup
\{\infty\}$ defined by
  \begin{equation}\label{eq:defLambda}
 \Lambda (f) (x) = \left \{ \begin{array}{c @{ $ $ } l}
\log \left( \underset{\gamma \in B_{_{E_n}}(e^x)} \sup  \Vert f(\gamma) \Vert \right) & \text{if }f \text{ is defined over } B_{E_n}(e^x)\,,\medskip\\
\infty & \text{elsewise}\,.
\end{array} \right.
\end{equation}
In this equation and in Proposition~\ref{prop:optimalprecision},
$B_{E_n}(\delta)$ (resp. $B_{F_n}(\delta)$) stands for the closed ball in
$E_n$ (resp. in $F_n$) of centre $0$ and radius $\delta$.
We define
\begin{math}
  \Lambda (f)_{\geq 2} (x) = \underset{y \geq 0}{\Inf} (\Lambda (f)(x+y) - 2\,y)
\end{math}
too.

\begin{lem}
\label{lem:Lambda}
Suppose $x < - \log 4$, then
$\Lambda (\theta _n) _{\geq 2} (x)< x$.
\end{lem}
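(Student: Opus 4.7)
The plan is to reduce the lemma to a single uniform bound on $\theta_n$. Given $x < -\log 4$, let $y_0 = -\log 4 - x$, which is positive. Plugging this choice into the definition of $\Lambda_{\geq 2}$, one obtains
$$
\Lambda(\theta_n)_{\geq 2}(x) \;\leq\; \Lambda(\theta_n)(x + y_0) - 2y_0 \;=\; \Lambda(\theta_n)(-\log 4) + 2x + 2\log 4.
$$
If I can establish $\Lambda(\theta_n)(-\log 4) \leq -\log 4$, the right-hand side is at most $\log 4 + 2x$, which is strictly less than $x$ precisely when $x < -\log 4$. So the lemma reduces to showing that the ball $B_{E_n}(1/4)$ lies inside the domain $W_n$ and that $\|\theta_n(\gamma)\|_{F_n} \leq 1/4$ for every $\gamma$ therein.

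The first point is routine. From the linear equation \Ep satisfied by $\eta := \psi_{+,n}(\gamma)$, evaluating at $t=0$ yields $-2a\, \eta(0) = \gamma(0)$, so $|\gamma(0)| = \tfrac12 |\eta(0)| \leq \tfrac12 \|\gamma\|_{E_n} \leq \tfrac18$; hence $(g+\gamma)(0)$ still has norm $1$ and $\gamma \in W_n$. For the quantitative bound, I would decompose $\theta_n(\gamma) = d\varphi_n(g)(\gamma) + R(\gamma)$. By Lemma~\ref{lem:isometry} the linear piece has $F_n$-norm exactly $\|\gamma\|_{E_n} \leq 1/4$, so by ultrametricity it suffices to show $\|R(\gamma)\|_{F_n} \leq 1/4$. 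Since $R(0) = 0$ and $dR(0) = 0$, the residual is at least quadratic at the origin, and the relation $d\theta_n(\gamma) = \tau_n(\theta_n(\gamma), \gamma)$ provides a recursion that gives access, inductively, to all the higher Taylor coefficients of $\theta_n$ at $0$.

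The main obstacle is turning this into an effective quadratic bound for $R$ on the ball of radius $1/4$. The threshold $1/4$ itself is not arbitrary: it is the norm of the singular root $4a$ of the coefficient $t(t{-}4a)$ in the equation, equivalently the radius of convergence of the series expansion of $(t{-}4a)^{-1}$ around $t=0$. This is precisely the scale on which the singularity of the differential equation does not yet poison the Taylor expansion of $\theta_n$, so that higher-order terms stay dominated by the linear one. The concrete argument will combine the explicit formula for $\tau_n$, the isometric nature of $d\varphi_n(g)$, and the logarithmic bounds of Proposition~\ref{prop:linear} relating the $E_n$- and $F_n$-norms through $\psi_{+,n}$, to produce term-by-term bounds on the Taylor coefficients of $R$ that sum to at most $1/4$ on $B_{E_n}(1/4)$.
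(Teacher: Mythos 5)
Your reduction is arithmetically sound: taking $y_0 = -\log 4 - x > 0$ in the infimum defining $\Lambda(\theta_n)_{\geq 2}$ does give $\Lambda(\theta_n)_{\geq 2}(x) \leq \Lambda(\theta_n)(-\log 4) + 2x + 2\log 4$, and the target bound $\Lambda(\theta_n)(-\log 4)\leq -\log 4$ would finish the job. Your verification that $B_{E_n}(1/4)\subset W_n$ via the relation $\gamma(0) = -2a\,\psi_{+,n}(\gamma)(0)$ is also correct. But the proof stops exactly where the difficulty begins. The statement $\|R(\gamma)\|_{F_n}\leq 1/4$ on the ball of radius $1/4$ is \emph{equivalent in strength} to the quadratic estimate $\Lambda(\theta_n)_{\geq 2}(x)\leq 2(x+\log 2)$ that the lemma encodes: one needs $\|R(\gamma)\|\leq 4\|\gamma\|^2$, and at $\|\gamma\|=1/4$ this is sharp ($4\cdot(1/4)^2 = 1/4$), so there is no slack. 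Knowing that $R(0)=0$ and $dR(0)=0$ does not, over a $p$-adic field, yield such a bound with a controlled constant on a prescribed ball --- the constant is governed by the norms of \emph{all} higher Taylor coefficients of $\theta_n$, and controlling those inductively through the relation $d\theta_n = \tau_n\circ(\theta_n,\Id)$ is precisely the content of \cite[Proposition~2.5]{carova15}, which is what the paper invokes. Your final paragraph announces that this "concrete argument will combine" the right ingredients but does not carry it out, so the crucial step is asserted rather than proved.

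For comparison, the paper's proof is short precisely because it does not attempt the pointwise bound: it checks the two easy facts $\Lambda(\Id)(x)=x$ and the boundedness of $\Lambda(\tau_n)$ (the latter following from $\|z'_g+t(t{-}4a)\zeta'+2(t{-}2a)\zeta\|$ and $\|(g+\gamma)^{-1}\|$ both being controlled by units, together with $\psi_{+,n}$ being an isometry $E_n\to F_n$), and then feeds the functional relation $d\theta_n=\tau_n\circ(\theta_n,\Id)$ into the general comparison result \cite[Proposition~2.5]{carova15} to obtain $\Lambda(\theta_n)_{\geq 2}(x)\leq 2(x+\log 2)$ for $x\leq -\log 2$, from which the lemma is immediate. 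If you want a self-contained argument along your lines, you would essentially have to reprove that proposition by induction on the degree of the Taylor coefficients of $\theta_n$; as written, your proof has a genuine gap at its central estimate.
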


\begin{proof}
  For all $x > 0$, one easily checks that $\Lambda (\Id)(x) = x$ and
  $\Lambda(\tau_n)(x) \geq 0$.  Applying~\cite[Proposition~2.5]{carova15}, we
  obtain $\Lambda(\theta)_{\geq 2} (x) \leq 2\,(x + \log 2 )$ for
  $x \leq - \log 2$.  In particular, $\Lambda (\theta _n) _{\geq 2} (x)< x$
  for $x < - \log 4$.
\end{proof}

We can now state the following important result, that gives the best
possible precision that one can expect for $\varphi_n(g)$ when $g$ is
itself only known up to some finite precision.

\begin{prop}
\label{prop:optimalprecision}
Under the assumption of Lemma~\ref{lem:isometry}, we have
\begin{displaymath}
  \varphi_n \big(g + B_{E_n}(\delta) \big) = \varphi_n (g) + B_{F_n}(\delta)%
  \ \text{ for all }\ %
  \delta < 1/4.
\end{displaymath}

\end{prop}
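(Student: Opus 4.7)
The plan is to translate the statement into a question about the function $\theta_n$, which already has the origin as a fixed point. Since $\theta_n(\gamma) = \varphi_n(g{+}\gamma) - \varphi_n(g)$, the claim becomes $\theta_n(B_{E_n}(\delta)) = B_{F_n}(\delta)$ for every $\delta < 1/4$. The natural framework is the $p$-adic ``differential precision'' machinery of \cite{carova14,carova15}: if an analytic map has an isometric differential at a point and its second-order (and higher) part is strictly dominated by its first-order part on some ball, then the map is an isometric bijection from that ball to the ball of the same radius in the target. I would therefore check that both hypotheses are in place for $\theta_n$ at the origin: the differential $d\theta_n(0) = d\varphi_n(g)$ is an isometry $E_n \to F_n$ by Lemma~\ref{lem:isometry} (this is precisely why the norm on $E_n$ was defined through $\psi_{+,n}$), and the second-order domination follows from Lemma~\ref{lem:Lambda}, applied at $x = \log \delta < -\log 4$.

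For the forward inclusion $\theta_n(B_{E_n}(\delta)) \subseteq B_{F_n}(\delta)$, I would expand $\theta_n(\gamma) = d\theta_n(0)(\gamma) + R(\gamma)$, where $R$ collects the Taylor terms of order $\geq 2$ at the origin. By the very definition of $\Lambda(\theta_n)_{\geq 2}$ (which measures the supremum norm of the order-$\geq 2$ part on a ball of a given radius), Lemma~\ref{lem:Lambda} gives $\Vert R(\gamma) \Vert_{F_n} < \Vert \gamma \Vert_{E_n}$ for every $\gamma \in B_{E_n}(\delta)$. Combined with the isometry $\Vert d\theta_n(0)(\gamma) \Vert_{F_n} = \Vert \gamma \Vert_{E_n}$, the ultrametric triangle inequality then forces $\Vert \theta_n(\gamma) \Vert_{F_n} = \Vert \gamma \Vert_{E_n} \leq \delta$, and in particular $\theta_n(\gamma) \in B_{F_n}(\delta)$.

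For the reverse inclusion, given $\zeta \in B_{F_n}(\delta)$, I would solve $\theta_n(\gamma) = \zeta$ by the non-archimedean Newton iteration $\gamma_0 = 0$, $\gamma_{k+1} = \gamma_k + d\theta_n(0)^{-1}\bigl(\zeta - \theta_n(\gamma_k)\bigr)$. Using the bound $\Vert R(\gamma) \Vert < \Vert \gamma \Vert$ together with the isometric character of $d\theta_n(0)$, one shows inductively that all $\gamma_k$ lie in $B_{E_n}(\delta)$ and that $\Vert \gamma_{k+1} - \gamma_k \Vert_{E_n}$ shrinks strictly at each step. Completeness of $E_n$ (a finite-dimensional $K$-vector space) yields a limit $\gamma \in B_{E_n}(\delta)$ with $\theta_n(\gamma) = \zeta$. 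Alternatively, the same conclusion can be extracted \emph{en bloc} from \cite[Prop.~3.12]{carova15}, whose hypotheses we have just verified.

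The main obstacle is the quantitative control in the Newton step: one has to leverage the strict inequality $\Lambda(\theta_n)_{\geq 2}(x) < x$ from Lemma~\ref{lem:Lambda} (and not merely $\leq$) in a uniform way on $B_{E_n}(\delta)$, so that the correction $\zeta - \theta_n(\gamma_k)$ is genuinely smaller than the previous error and the iteration converges \emph{inside} the open condition $\delta < 1/4$. This sharp quantitative bound is exactly what the Legendre-type transform $\Lambda_{\geq 2}$ is designed to encode, and is what ties the whole argument back to the local-analytic nature of $\varphi_n$.
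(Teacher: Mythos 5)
Your proof is correct and follows essentially the same route as the paper: the paper's argument is precisely to feed the bound of Lemma~\ref{lem:Lambda} into \cite[Proposition~3.12]{carova14} to get $\varphi_n(g+B_{E_n}(\delta)) = \varphi_n(g) + d\varphi_n(g)(B_{E_n}(\delta))$ and then conclude with Lemma~\ref{lem:isometry}. The only difference is that you additionally unpack the cited proposition (ultrametric estimate on the order-$\geq 2$ remainder plus a Newton iteration for surjectivity), which is a faithful expansion of the black box rather than a different approach; note only the minor citation slip (Proposition~3.12 is in \cite{carova14}, not \cite{carova15}).
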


\begin{proof}
  Applying~\cite[Proposition~3.12]{carova14} with the bound of
  Lemma~\ref{lem:Lambda}, we obtain for $\delta < 1/4$
  $$\varphi _n \big(g + B_{E_n}(\delta ) \big) = {\varphi _n} (g) +
    d\varphi _n(g)\big( B_{E_n}(\delta)\big).$$
  We conclude by applying Lemma~\ref{lem:isometry}.
\end{proof}

\begin{remark}
\label{rmk:2}
Using that $\varphi_n$ is injective, we see that
Proposition~\ref{prop:optimalprecision} implies that
$$\Vert \varphi_n(g + \gamma) - \varphi_n(g) \Vert_{F_n}
= \Vert \gamma \Vert_{E_n}$$
as soon as $g$ satisfies the assumption of Lemma~\ref{lem:isometry}
and $\Vert \gamma \Vert_{E_n} \leq 1/4$.
More generally, applying this result with $g$ replaced with $g +
\delta$ with $\Vert \delta \Vert_{E_n} \leq 1/4$, we find
that $\varphi_n$ is an isometry in restriction to the ball of
centre $g$ and radius $1/4$.
\end{remark}

\noindent
\begin{proof}[Correctness proof of Theorem~\ref{thm:stabnonlinear}.]
  Let $U$, $V$ and $n$ be the input of Algorithm~\ref{algo:IsoSolve}.  We
  first claim that the output of Algorithm~\ref{algo:DiffSolve} satisfies
\begin{equation}
\label{eq:algoproof}
 W(t, q_n) \equiv  u^2 \cdot z'_n{}^{\!2}  \pmod {t^{n-1}, 2^M}.
\end{equation}
for all $n$. We shall prove it by induction on $n$.
The case $n=1$ is easy. Let $m \geq 1$ be a positive integer and $n = 2m-1$. We suppose that Eq.~\eqref{eq:algoproof} is true for $m$.
We set $\lambda = ba^{-1}$. Since $z_m \equiv \lambda t + t(t{-}4a)q_m \pmod {t^n}$, we derive the following relation
\begin{equation}
\label{eq1:proof}
V(z_m)\equiv  t(t{-}4a) \cdot W(t, q_m) \pmod {t^n}.
\end{equation}
Taking the logarithmic derivative of $W$
with respect to $x$, we get
\begin{align*}
\frac{\partial W}{\partial x}(t, q_m)
& = W(t, q_m) \cdot \left(\frac{t{-}4a}{\lambda  + (t{-}4a)q_m}
  + \frac{t}{\lambda  + tq_m}
  + \frac{2t(t{-}4a)\, v' \big (\lambda t + t(t{-} 4a) q_m \big ) }{ v\big (\lambda t + t(t{-} 4a) q_m \big )}\right) \\
& = W(t, q_m) \cdot \left(\frac{t(t{-}4a)}{z_m} + \frac{t(t{-}4a)}{z_m{-}4b} + \frac{2t(t{-}4a)\, v'(z_m)}{v(z_m)} \right).
\end{align*}
Using now Eq.~\eqref{eq1:proof}, we get
$$\frac{\partial W}{\partial x}(t, q_m)
\equiv ( z_m{-}4b) \, v(z_m)^2 + z_m \, v(z_m)^2 + 2 \: v(z_m) \, v'(z_m)
\equiv V'(z_m) \pmod {t^{n-1}}\,.$$
In addition, we have
$q_n \equiv q_m + z'_m u \cdot y_n \pmod {t^n, 2^M}$ by construction.
By Remark~\ref{rem:lindiffsolveval} combined with the induction
hypothesis, we know moreover the $m$ first coefficients of $y_n$
vanish, \emph{i.e.} $y_n \equiv 0 \pmod{t^m, 2^M}$. We then deduce
\begin{align}
W(t, q_n)
 & \equiv W(t, q_m) + z'_m u \: y_n \cdot \frac{\partial W}{\partial x}(t, q_m)
   \pmod {t^{n-1}, 2^M} \nonumber \\
 & \equiv W(t, q_m) + z'_m u \: y_n \cdot V'(z_m)
   \pmod {t^{n-1}, 2^M}. \label{eq:cong1}
\end{align}
Besides, by definition of $y_n$, we have the relation
\begin{displaymath}
  t(t{-}4a) y'_n + (t{-}2a) y_n \equiv
  \frac 1{2u^3} \left( \frac{W(t,q_m)}{z'_m{}^{\!2}} - u^2\right)
  \pmod {t^{n-1}, 2^M}\,,
\end{displaymath}
from which we derive
\begin{equation}
\label{eq:cong2}
W(t,q_m) - u^2 z'_m{}^{\!2} \equiv
2 u^3  z'_m{}^{\!2} \cdot \big(t(t{-}4a) y'_n + (t{-}2a) y_n\big)
\pmod {t^{n-1}, 2^M}\,.
\end{equation}
Similarly, using the congruence
$z_n \equiv z_m + t(t{-}4a) z'_m u \: y_n \pmod {t^n, 2^M}$, we
obtain
\begin{equation}
\label{eq:cong3}
u^2 \, z'_n{}^{\!2} \equiv
u^2 \, z'_m{}^{\!2} + 2 u^2 \, z'_m \, \big( t(t{-}4a) z'_m u \: y_n \big )'
\pmod {t^{n-1},2^M}.
\end{equation}
Combining Eqs.~\eqref{eq:cong1}, \eqref{eq:cong2} and \eqref{eq:cong3},
we end up with
$$\begin{array}{l}
W(t,q_n) - u^2\,z'_n{}^{\!2} \smallskip \\
\hspace{5ex}\equiv
  u\,y_n \cdot \big ( V(z_m) \big )'
+ 2u^3 \, z'_m {}^{\!2} \big ( t(t{-}4a) \, y'_n + (t{-}2a)\, y_n \big )
- 2u^2 \, z'_m \cdot \big ( t(t{-}4a) z'_m u \, y_n \big )'\,, \smallskip \\
\hspace{5ex}\equiv
  u\, y_n \cdot \big ( V(z_m) - t (t{-}4a) u^2 \, z'_m{}^{\!2}  \big )'
\pmod {t^{n},2^M}\,.
\end{array}$$
Using Eq.~\eqref{eq1:proof}, we finally conclude that
Eq.~\eqref{eq:algoproof} holds true for $n$; our claim is proved.

Multiplying Eq.~\eqref{eq:algoproof} by $g^2$ on both sides, we get
$$g^2 \cdot W(t, q_n) \equiv  z'_n{}^{\!2} \pmod {t^{n-1} , 2^M}\,.$$
We now observe that
$$W(t,q_n) \equiv (\lambda + t q_n)^2 \cdot v(\lambda t + t^2 q_n)^2
\pmod 4$$ showing that $W(t,q_n)$ is a square modulo $4$. It thus admits a
square root $w \in \OKt$, up to possibly replacing $K$ by its unique
unramified extension of degree~$2$ (see also Remarks~\ref{rem:HUHV}
and~\ref{rem:HUHVbis}).  We define $g_n = {z'_n}/w$, so that we have
$z_n = t(t{-}4a)\:\varphi_n (g_n)$ and
$\Vert g^2 - g_n^2 \Vert _{F_{n-1}} \leq 2^{-M}$. The last inequality
indicates in particular that $g(0)^2 \equiv g_n(0)^2 \pmod {\pi^{eM}}$. We
normalize $g_n$ in such a way that $g(0) \equiv g_n(0) \pmod 4$ (this is
always possible because $M \geq 3$). Then the series $g + g_n$ is divisible by
$2$ and its constant term has valuation $1$.  As a consequence, $g + g_n$ is
invertible in $\Kt$ and its Gauss norm is $1/2$. We deduce that
$$
\Vert g - g_n \Vert_{F_{n-1}}
= \Vert g^2 - g_n^2  \Vert_{F_{n-1}} \cdot
\Vert g + g_n \Vert_{F_{n-1}}^{-1}
= 2 \cdot \Vert g^2 - g_n^2  \Vert_{F_{n-1}} \leq 2^{-M+1}.$$
So $\Vert g - g_n \Vert _{E_{n-1}} \leq 2^{-N}$.
Using Remark~\ref{rmk:2}, we conclude that
$$\Vert z_g - z_n \Vert _{F_n} \leq \Vert \varphi _n (g) - \varphi _n (g_n) \Vert _{F_{n-1}} = \Vert g - g_n \Vert _ {E_{n-1}}\leq 2^{-N}.$$

We finally justify that all computations stay within $\OK$, so that no
error is raised during the execution of \texttt{IsoSolve}.
Examining the successive operations performed by
the algorithm, we see that nonintegral coefficients may show up only
during the computation of $f_n$ (because of the division by $2$) and
that of $y_n$ (because of the call to \texttt{LinDiffSolve}).
After Proposition~\ref{prop:stablinear}, we are reduced to check that
$f_n$ and $y_n$ have integral coefficients modulo $t^n$.
By construction, they are related by the relation
$$t(t{-}4a) y'_n + 2(t{-}2a) y_n \equiv f_n \pmod{t^n}$$
so the integrality of $y_n$ will directly imply that of $f_n$.  By
construction, $z_n \equiv z_m + z'_m u y_n \pmod{t^n}$.  Besides, we know that
$z_m$ and $z_n$ have integral coefficients. We deduce that $y_n$ has integral
coefficients as well, given that $z'_n$ and $u$ are invertible in $\OKt$.
\end{proof}

\subsection{Experiments}
\label{sec:experiments}

We made an implementation of both Algorithm~\ref{algo:LinSolve} and the Pad\'e
approximant step (thanks to the \textsc{half-gcd} algorithm given
in~\cite{thome03}) with the \textsc{magma} computer algebra
system~\cite{magma}. Our implementation is available at~\cite{github}; it is
fairly optimized and can compute isogenies up to degree $10^6$ in less than
one minute (see precise timings on Figure~\ref{fig:timingsgf2},
page~\pageref{fig:timingsgf2}).  The degree $11$ toy example presented below
was computed with this software as well.

\subsubsection{A toy example}
\label{sec:a-toy-example}

We consider the elliptic curve given by
\begin{math}
  E/\FF:\, y^2 + x\,y = x^3 + 1\,.
\end{math}
The abstract structure of its endomorphism ring is the ring of integers of
$\QQ(\sqrt{-7})$, the class group of which is trivial. In particular, there exists
an isogeny of degree 11, which turns out to be an endomorphism of $E$. Let us
compute it.

We first lift $E$ over $\QQ$ as
${\mathcal E}/\QQ:\, y^2 = x^3 + 2^{-2}\,x^2 + 1 + O(2^{9})$. Using
computations in $\Q(\sqrt{-7})$ (as detailed in Section~\ref{sec:isog-large-degr}),
we find that ${\mathcal E}/\QQ$ is $11$-isogenous to the curve ${\mathcal
E'}/\QQ:\, y^2 = x^3 + 2^{-2}\,x^2 + 225 + O(2^{9})$, the ``differential
constant'' of the isogeny being equal to $41 + O(2^{9})$.
A simple Newton iteration leads to $4a = -16 + O(2^{9})$ and
$u^2 = 65 -16t + 4t^2 + O(2^{9})$. Extracting the inverse square root, we
obtain
\begin{small}
  \begin{multline*}
    \frac{1}{u} = 225 - 248\,t - 226\,t^2 + 208\,t^3 - 122\,t^4 + 240\,t^5 + 172\,t^6 + 160\,t^7 -
    250\,t^8 - 80\,t^9 - 60\,t^{10} + 96\,t^{11} +
    O\big(2^{9}, t^{12}\big),
  \end{multline*}
\end{small}\noindent
from which it is easy to compute $u$ and $u^{-3}$.
All precomputations of Algorithm~\ref{algo:DiffSolve} are now finished and
we can start the first step of the main Newton iteration. We begin with
$q_0 = 10 + O\big(2^{9},t)$ and find
\begin{align*}
  z_0 &= 41\,t + 10\,t^2 + O(2^{9}, t^3)\,,\
  & s_0 &= 164\,t + O(2^{9}, t^2)\,,\
  & r_0 &= 113 + 152\,t + O(2^{9}, t^2)\,,\\
  f_0 &= 228\,t + O(2^{9}, t^2)\,,\
  & y_0 &= -211\,t + O(2^{9}, t^2)\,,\
  & q_1 &= 10 - 43\,t + O(2^{9}, t^2)\,.
\end{align*}
Three intermediary steps follow similarly, allowing to increase
$t$-adic precision from $O(t^2)$ to $O(t^3)$, to $O(t^6)$ and then
to $O(t^{12})$. After these computations, we are left with
\begin{small}
  \begin{multline*}
    q_{11} = 10 - 43\,t + 140\,t^2 - 6\,t^3 + 182\,t^4 - 89\,t^5 + 228\,t^6 + 246\,t^7 +
    248\,t^8 + 76\,t^9 + 20\,t^{10} + 206\,t^{11} +
    O\big(2^{9},t^{12}\big)
  \end{multline*}
\end{small}%
and a last iteration finally yields
\begin{small}
$$\begin{array}{l}
    z_{11} = 41\,t + 94\,t^2 + 5\,t^3 + 116\,t^4 + 210\,t^5 + 82\,t^6 - 201\,t^7 + 188\,t^8 +\\
    \hspace{30ex} 214\,t^9 + 40\,t^{10} + 156\,t^{11} - 180\,t^{12} + O(2^{9},t^{13})\,, \smallskip \\
    s_{11} = 200\,t + 48\,t^2 - 4\,t^3 - 32\,t^4 + 224\,t^5 - 128\,t^6 + 32\,t^7 + 160\,t^8 +\\
    \hspace{30ex} 96\,t^9 - 192\,t^{10} + 96\,t^{11} - 128\,t^{12} + O(2^{9},t^{13})\,, \smallskip \\
    r_{11} = 113 + 200\,t - 222\,t^2 - 136\,t^3 + 175\,t^4 - 56\,t^5 - 10\,t^6 + 48\,t^7 - 137\,t^8 +\\
    \hspace{30ex} 168\,t^9 + 226\,t^{10} - 240\,t^{11} + 238\,t^{12}  + O(2^{9},t^{13})\,, \smallskip \\
    f_{11} =  -184\,t + 48\,t^2 - 100\,t^3 - 32\,t^4 + 184\,t^5 + 16\,t^6 - 4\,t^7 - 192\,t^8 -\\
    \hspace{30ex} 24\,t^9 + 16\,t^{10} + 180\,t^{11} + 256\,t^{12}  + O(2^{9},t^{13})\,, \smallskip \\
    y_{11} =  94\,t^{12} + 131\,t^{13} - 172\,t^{14} - 82\,t^{15} + 34\,t^{16} - 215\,t^{17} + 80\,t^{18} -
    120\,t^{19} +\\
    \hspace{30ex} 70\,t^{20} - 233\,t^{21} + 110\,t^{22} + 161\,t^{23} + O(2^{9},t^{24})\,,
\end{array}$$
\end{small}%
then
\begin{small}
  \begin{multline*}
    q_{23} = 10 - 43\,t + 140\,t^2 - 6\,t^3 + 182\,t^4 - 89\,t^5 + 228\,t^6 +
    246\,t^7 + 248\,t^8 + 76\,t^{9} + 20\,t^{10} + 206\,t^{11} + 206\,t^{12} +
    243\,t^{13}\\\ \ \ \ \ \ \ \ - 210\,t^{14} - 143\,t^{15} - 206\,t^{16} + 145\,t^{17} +
    244\,t^{18} - 218\,t^{19} + 10\,t^{20} + 137\,t^{21} - 166\,t^{22} +
    147\,t^{23} + O(2^{9},t^{24})\,,
  \end{multline*}
\end{small}%
and
\begin{small}
  \begin{multline*}
    z_{24} = 41\,t + 94\,t^2 + 5\,t^3 + 116\,t^4 + 210\,t^5 + 82\,t^6 -
    201\,t^7 + 188\,t^8 + 214\,t^9 + 40\,t^{10} + 156\,t^{11} - 180\,t^{12} +
    6\,t^{13}\\\ \ \ \ \ \ \ \ - 102\,t^{14} - 85\,t^{15} - 14\,t^{16} + 57\,t^{17} +
    118\,t^{18} + 97\,t^{19} - 116\,t^{20} - 178\,t^{21} - 210\,t^{22} -
    15\,t^{23} + 166\,t^{24} + O(2^{9},t^{25})\,.
  \end{multline*}
\end{small}%
A call to the \textsc{half-gcd} algorithm with input $\sqrt{z_{24}\, /\, t} \mod 2$,
which is $1 + t + t^3 + t^{7} + t^{8} + t^{9} + t^{11} + O(t^{12})$, allows us
to recover the rational function
\begin{displaymath}
  \frac{t^5 + t^3 + t^2 + t + 1}{t^5 + t^4 + t^3 + t^2 + 1}+O(t^{12})\,,
\end{displaymath}
from which one deduces that the curve $E/\FF$ is self $11$-isogenous
under the mapping
$$x \mapsto
\frac{x\,(x^5+ x^3 + x^2 + x +1)^2}{(x^5 + x^4 + x^3 + x^2 + 1)^2}\,.$$

\subsubsection{Some timings}
\label{sec:some-timings}

We made use of our \textsc{magma} software to measure the time needed to
compute isogenies up to degree $1\,500\,000$ for an elliptic curve defined
over $\FF$. Results are reported on Figure~\ref{fig:timingsgf2}.  Since
multiplying $2$-adic series can be done in almost linear times with
\textsc{magma}, the time complexity of our implementation is almost linear as
well: the observed timings fit rather well with the expected time complexity,
which is $O(\ell\,\log^2 \ell)$.  The timings for \textsc{half-gcd} are
significantly smaller (by a factor close to $3$) because of two facts: first,
the degree of the inputs is $2$ times smaller than in
Algorithm~\ref{algo:LinSolve} and, second, the underlying polynomial
arithmetic over $\FF$ is slightly more efficient than the arithmetic with
$2$-adic series in \textsc{magma}.

\begin{figure}[htbp]
  \centering
  \includegraphics[width=0.55\textwidth, angle=-90]{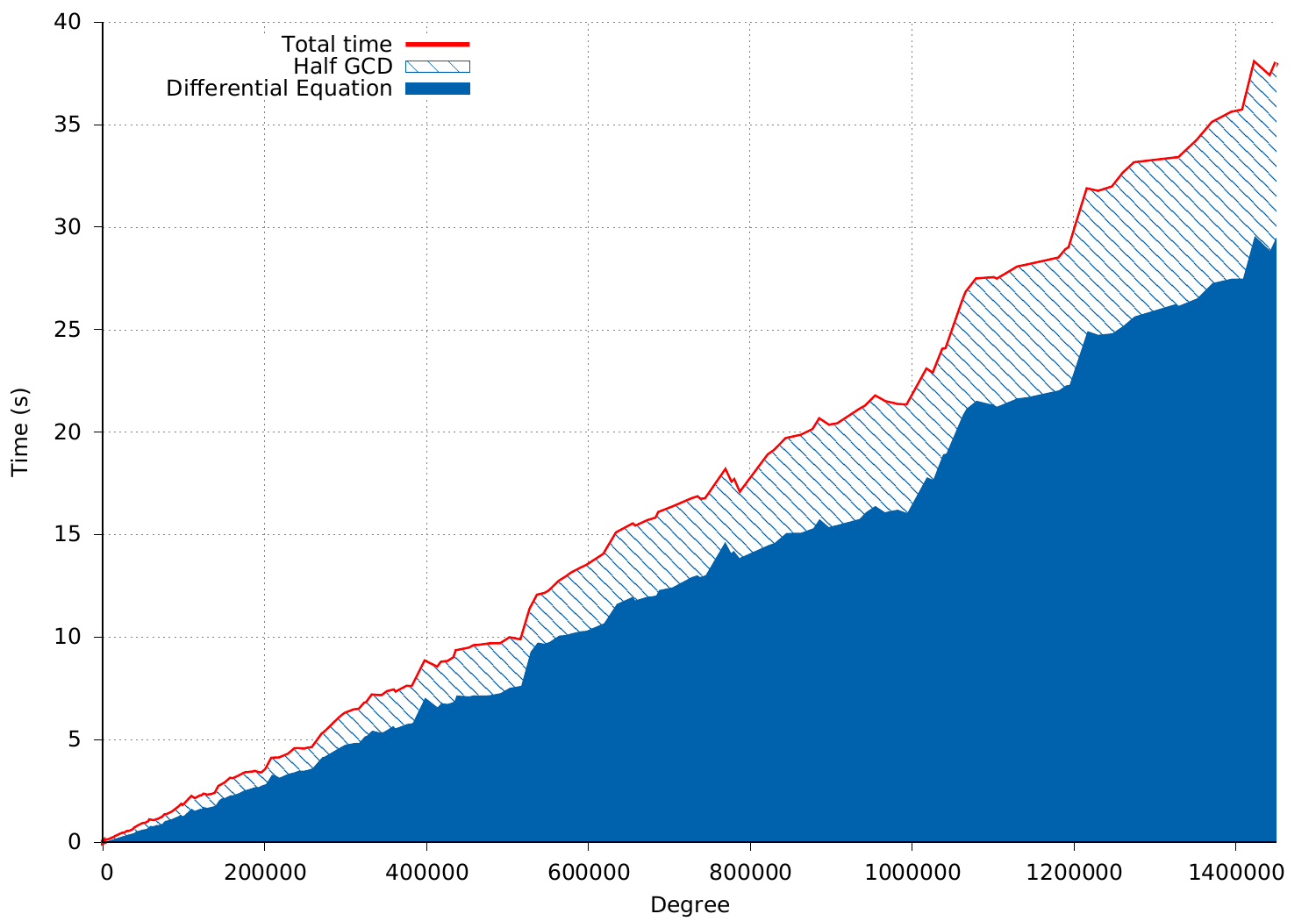}

  \medskip

  {\scriptsize%
  Timings obtained with \textsc{magma
  v2.24-10} on a laptop with an \textsc{intel} processor
  \textsc{i7-8850h@2.60ghz}}

  \caption{Isogeny computations in $\FF$.}
  \label{fig:timingsgf2}
\end{figure}

\section{Applications}
\label{sec:applications}

Thanks to the results of~\cite{bomosasc08,lesi08,lava16} in odd characteristic
and the case of characteristic 2 being solved in
Section~\ref{sec:main-result}, we now have in all characteristic fast
algorithms for computing isogenies, at least if we have a Weierstrass model of
the isogenous curve and the isogeny differential.
In this section, we are interested in the calculation of irreducible
polynomials. We show how we can extend to the case of very small finite
fields, especially $\GF{2}$, the construction of~\cite{CL13}.
With this aim, we start with a brief presentation on endomorphism rings and
isogenies in Section~\ref{sec:endom-ring-isog}, and show in
Section~\ref{sec:isog-large-degr} how to calculate the isogenous curves and
the isogeny differentials over finite fields of small characteristic. Then, in
Section~\ref{sec:fast-comp-irred}, we apply this construction to build
irreducible polynomials and we end with an example in
Section~\ref{sec:an-example}.  \medskip

\subsection{Endomorphism ring and isogenies}
\label{sec:endom-ring-isog}

We briefly introduce some facts about the theory of complex
multiplication. Good references are \cite{lang73,silv2,cox13}.

An isogeny $E_1\rightarrow E_2$ of elliptic curves defined over a field $k$ is
a surjective morphism of curves that induces a group homomorphism
$E_1(\bar{k})\rightarrow E_2(\bar{k})$. We denote by $\Hom_k(E_1, E_2)$ the
set of homomorphisms from $E_1$ to $E_2$ over $k$ and let
$\End_k(E) = \Hom_k(E, E)$. We write
$\End(E) = \End_{\bar k}(E)$. Composition of endomorphisms gives a ring
structure on $\mathcal{O} = \End(E)$, and we refer to $\mathcal{O}$ as the
ring of endomorphisms of $E$.

As a $\Z$-module, $\End_k(E)$ is free of rank at most four. More precisely,
$\End_k(E)$ is either $\Z$, an order in an imaginary quadratic field (ordinary
case) or an order in a definite quaternion algebra (supersingular case). By
definition, ordinary or supersingular elliptic curves have complex
multiplication. Moreover, every endomorphism $\varphi$ satisfies in $\End_k(E)$ a
quadratic characteristic polynomial with integer coefficients,
\begin{math}
  \varphi^2 - t\,\varphi + d = 0\,.
\end{math}
The integer $t$, denoted $\Tr(\varphi)$, is called the trace of
$\varphi$. Over $k = \GF{q}$, the Frobenius endomorphism $\phi _q$ takes a
leading role, since it determines the group and $\End_k(E)$-structure of the
rational points of $E$~\cite{lenstra96}. In particular, it satisfies the Weil
polynomial
\begin{equation}\label{eq:weil}
\phi_q^2 - t\,\phi_q + q = 0
\end{equation}
where $t = \Tr \phi_q =: \Tr E$ is such that $q+1-t$ is the number of rational
points of $E$ over $k$.\smallskip

What follows is for elliptic curves over $\mathbb C$, but these results reduce
well to elliptic curves over finite fields. In the ordinary case, let
$\mathcal{O}$ be an order in an imaginary quadratic field $\kappa$. Then the theory
of complex multiplication states that there is a number field $L$ containing
$\kappa$ and an elliptic curve $E$ over $L$ with $\End_{\bar L}(E) =
\mathcal{O}$. Let $p$ be a prime that splits completely in $\mathcal{O}_L$ and
$\mathfrak p$ be a prime of $\mathcal{O}_L$ above $p$, so that
$\mathcal{O}_L / {\mathfrak p} \simeq \GF{p}$. If $\mathfrak p$ does not
divide the discriminant of $E$, then $E$ has good reduction modulo
$\mathfrak p$. Let $\bar E$ denote this reduction, then
$\End_{\bar {\mathbb F}_p}(\bar E) \simeq \mathcal{O}$. Conversely, every
elliptic curve $\bar E$ arises as the reduction of an elliptic curve $E$ over
some $L$ with same ring of endomorphisms $\mathcal O$, called the canonical
lift of $\bar E$.

There is a one-to-one correspondence between the isomorphism classes of
elliptic curves $E/\GF{q}$ with $\End(E)=\mathcal O$ and the ideal class group
$\Cl(\mathcal{O})$ (\textit{i.e.} the quotient group of the fractional
$\mathcal O$-ideals that are prime to the conductor $\mathcal O$ by its
subgroup of principal ideals). To an invertible $\mathcal O$-ideal
$\mathfrak a$, one associates the elliptic curve
$E_{\mathfrak{a}} \simeq \mathbb{C}/\mathfrak{a}$.  An ideal $\mathfrak a'$ is equivalent to
$\mathfrak a$ in $\Cl(\mathcal{O})$ if and only if $\mathbb{C}/\mathfrak{a}'$
is isomorphic to $E_{\mathfrak{a}}$.

Let now $\mathfrak{l}$ be an invertible $\mathcal O$-ideal, and define the
the kernel of $\mathfrak{l}$ in $E_{\mathfrak{a}}$ to be the intersection of the kernels of all
endomorphisms in $\mathfrak{l}$. We denote it $E_{\mathfrak{a}}[\mathfrak{l}]$,
\begin{eqnarray*}
  E_{\mathfrak{a}}[\mathfrak{l}] &\simeq& \{ z \in \mathbb{C} : \alpha z \in \mathfrak{a},\ \text{for
                           all}\ \alpha \in \mathfrak{l} \subset \End(E_{\mathfrak{a}}) \}\,,\\
  &\simeq& \mathfrak{l}^{-1}\mathfrak{a}/\mathfrak{a}.
\end{eqnarray*}
The identity map on $\mathbb{C}$ induces the isogeny
$I : \mathbb{C}/\mathfrak{a} \rightarrow \mathbb{C}/\mathfrak{l}^{-1}\mathfrak{a}$
with kernel $E_{\mathfrak{a}}[\mathfrak{l}]$. The norm of $\mathfrak{l}$ is equal to the
degree of the isogeny. The terminology ``quotient isogeny'' is sometimes used,
together with the notation $I : E_{\mathfrak{a}} \rightarrow E_{\mathfrak{a}} / E_{\mathfrak{a}}[\mathfrak{l}]$.

Every isogeny between elliptic curves with isomorphic ring of endomorphisms
arises in this way.
In particular, let $I_1 : E_1 \rightarrow E_2$ be a first isogeny defined by
$\mathfrak{l_1}$, and $I_2 : E_2 \rightarrow E_3$ be a second isogeny
defined by $\mathfrak{l_2}$, then the kernel of
\begin{math}
  I_2 \circ I_1 : \xymatrix{ E_1 \ar@{->}[r]^{I_1} & E_2 \ar@{->}[r]^{I_2} & E_3}
\end{math}
is $E_1[\mathfrak{l}_1 \mathfrak{l}_2 ]$.\smallskip

These facts are well summarized in the following proposition.
\begin{prop}[{\cite[Prop. 1.2, Chap. II]{silv2}}]
  \ %
  \begin{enumerate}
  \item[(a)] Let $E_{\mathfrak a}$ be an elliptic curve with endomorphism ring
    ${\mathcal O}_\kappa$ and let $\mathfrak l$ and $\mathfrak{l}'$ be
    non-zero fractional ideals of ${\mathcal O}_\kappa$.
    \begin{enumerate}
    \item[(i)] $\mathfrak{l}\,\mathfrak{a}$ is a lattice in $ \mathbb{C}$.
    \item[(ii)] The elliptic curve $E_{\,\mathfrak{l}\,\mathfrak{a}}$ satisfies
      $\End(E_{\,\mathfrak{l}\,\mathfrak{a}}) \simeq {\mathcal O}_\kappa$.
    \item[(iii)]
      $E_{\,\mathfrak{l}\,\mathfrak{a}} \simeq E_{\,\mathfrak{l}'\,\mathfrak{a}}$
      if and only if $\mathfrak{l} \simeq \mathfrak{l}'$ in
      $\Cl(\mathcal{O_\kappa})$.
    \end{enumerate}
    Hence there is a well-defined action of $\Cl(\mathcal{O_\kappa})$ on the
    set of elliptic curves with endomorphism ring ${\mathcal O}_\kappa$
    determined by
    \begin{math}
      \mathfrak{l} * E_{\mathfrak{a}} = E_{\,\mathfrak{l}^{-1}\,\mathfrak{a}}\,.
    \end{math}
  \item[(b)] The action of $\Cl(\mathcal{O_\kappa})$ described in (a) is
    simply transitive. In particular $\# \Cl(\mathcal{O_\kappa})$ is equal to
    the number of elliptic curves with endomorphism ring
    ${\mathcal O}_\kappa$.
\end{enumerate}

\end{prop}

\subsection{Isogenies of large degree}
\label{sec:isog-large-degr}

Let $E$ be an elliptic curve with complex multiplication defined over a finite
field $k$ and $\ell>2$ a large prime integer. Here, the field of definition of $E$
is supposed to be very small compared to $\ell$ so that in this case, up to an
endomorphism, an $\ell$-isogeny can be written as a composition of small
isogenies. This can be done by working in the ideal class group of the
endomorphism ring of $E$. In fact, the situation is very similar to that
behind the algorithm given by Kohel in his thesis for computing the
endomorphism ring of an elliptic curve.

\begin{thm}[{\cite[Th.~1]{kohel96}}]\label{th:kohel}
  There exists a deterministic algorithm that, given an elliptic curve $E$ over
  a finite field $k$ of $q$ elements, computes the isomorphism type of the
  endomorphism ring of $E$ and if a certain generalization of the Riemann
  hypothesis holds true, for any $\varepsilon>0$ runs in time
  $O(q^{1/3+\varepsilon})$.
\end{thm}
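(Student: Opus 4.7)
The plan is to reduce to the ordinary case and then exploit the "isogeny volcano" structure attached to each prime dividing the conductor of $\Z[\pi]$ in $\mathcal{O}_K$, where $K$ is the imaginary quadratic field containing the Frobenius $\pi$.

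First I would dispatch the supersingular case: we detect it by computing the trace of Frobenius $t$ with (say) Schoof's algorithm in time polynomial in $\log q$, and checking whether $p \mid t$. In the supersingular situation, $\End(E)$ is (up to isomorphism) a maximal order in the unique quaternion algebra over $\Q$ ramified at $p$ and $\infty$; this datum depends essentially only on $p$, and the isomorphism class of $\End(E)$ can be read off from whether $E$ is defined over $\GF{p}$ or $\GF{p^2}$ together with standard tables of $B_{p,\infty}$. So the bulk of the work is the ordinary case.

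In the ordinary case, $K = \Q(\pi)$ is imaginary quadratic and $\End(E)$ is an order $\mathcal{O}$ with $\Z[\pi] \subseteq \mathcal{O} \subseteq \mathcal{O}_K$. From $t$ we compute the discriminant $t^2 - 4q = f^2 \cdot d_K$ with $d_K$ fundamental, so the conductor $[\mathcal{O}_K : \mathcal{O}]$ divides $f$, and it suffices to determine its $\ell$-adic valuation for each prime $\ell \mid f$. The core geometric input is the structure of the $\ell$-isogeny graph of ordinary curves with fixed Frobenius, which forms a \emph{volcano}: the number of roots in $k$ of $\Phi_\ell(X, j(E))$ (where $\Phi_\ell$ is the classical modular polynomial) is $1$ or $2$ if $E$ lies at the floor, $\ell+1$ if $E$ lies on the crater, and $\ell+1$ at intermediate levels as well, but the distinction between crater and interior is resolved by iteratively walking horizontally or upward and tracking how many steps are available before the graph collapses. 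Each such step costs a polynomial in $\ell$ and $\log q$ (evaluating and factoring $\Phi_\ell$ over $k$), and the height of the volcano is $v_\ell(f)$, so the total per-prime cost is roughly $\tilde{O}(\ell \cdot v_\ell(f))$.

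The hard part, and the reason the bound is $O(q^{1/3+\varepsilon})$ rather than something naive like $O(q^{1/2+\varepsilon})$, is to control simultaneously the cost of factoring $f$ and the cost of the volcano walks. Trivially $f = O(\sqrt q)$, and factoring $f$ would already cost $\sqrt f = O(q^{1/4})$ by generic methods; worse, a single prime $\ell$ dividing $f$ could itself be of size $\sqrt q$. Here one invokes GRH to guarantee the existence of small primes splitting in $\mathcal{O}_K$ in sufficient density, which allows Kohel to replace direct ascent/descent in a large volcano by composition of short horizontal steps in several smaller volcanoes, balanced so that the maximum of (extraction cost for the $\ell$-part of $f$) and (volcano traversal cost) is minimized at roughly $q^{1/3}$. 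I would carry out this balancing argument last, after verifying correctness of the volcano-walk certification that $\mathcal{O} = \Z + f' \mathcal{O}_K$ for the computed candidate conductor $f'$; the GRH-conditional density estimate for split small primes is where the argument becomes nontrivial and is the step I expect would require the most care.
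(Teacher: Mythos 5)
There is nothing in the paper to compare against here: Theorem~\ref{th:kohel} is quoted verbatim from Kohel's thesis and is used as a black box (and in fact the surrounding text immediately specializes to ordinary curves whose endomorphism ring is already known abstractly), so the only question is whether your sketch would actually establish the cited result. Your outline of the ordinary case is the standard isogeny-volcano retelling of Kohel's argument and is correct in spirit, but two points need flagging. First, your dispatch of the supersingular case is wrong: knowing that $\End(E)$ is a maximal order in the quaternion algebra $B_{p,\infty}$ does not determine its isomorphism type, since $B_{p,\infty}$ has on the order of $p/12$ conjugacy classes of maximal orders, one per supersingular $j$-invariant; identifying which one is attached to a given curve is a genuinely hard computational problem (it is the presumed-hard problem underlying supersingular-isogeny cryptography), not a table lookup. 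Kohel's $O(q^{1/3+\varepsilon})$ statement, and the use made of it in this paper, concern the ordinary case; the supersingular case is treated separately in his thesis with a different (and larger) complexity bound.

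Second, and more importantly, the entire quantitative content of the theorem --- the exponent $1/3$ --- lives in the step you explicitly postpone. Walking the $\ell$-volcano directly requires evaluating and factoring $\Phi_\ell(X, j(E))$ over $k$, and $\Phi_\ell$ alone has $\Theta(\ell^{3+o(1)})$ bits, so this is only affordable for small $\ell$; since a prime $\ell$ with $\ell \mid f$ can be as large as $2\sqrt q$, the direct method cannot close the argument. Kohel's resolution is not merely a ``balancing'' of two versions of the volcano walk: for large $\ell$ he distinguishes the orders $\Z + \ell^i \mathcal{O}_K$ by exhibiting relations in their class groups, realized as closed cycles of horizontal isogenies of small prime degree, and GRH enters to guarantee that the relevant class groups are generated by (and admit short relations among) primes of polylogarithmic or at worst $q^{o(1)}$-size norm. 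Until that reduction is stated and its cost bounded, the claimed $O(q^{1/3+\varepsilon})$ is unsupported; as written, your argument only yields a bound governed by the largest prime factor of the conductor.
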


Since in our case of interest, the field of definition of the curves is rather
small while the degrees of the isogenies are rather large, we can suppose that
we are given the endomorphism ring $\End(E)$ of $E$ as an order
$\mathcal{O}_\kappa$ in an imaginary quadratic field
$\kappa=\Q(\sqrt{-\Delta})$, $\Delta$ a primitive discriminant. For the sake
of simplicity, we assume that this order is maximal, \textit{i.e.}  equal to
the ring of integers $\Z[\omega]$ of $\kappa$.%

In this context, a prime integer $\ell\neq p$ that splits in $\kappa$ is
usually called an Elkies prime for $E$. We more generally define \emph{Elkies
  degrees} for $E$ as integers whose prime divisors are all Elkies primes for
$E$. Incidentally, there exist a $k$-rational $\ell$-isogeny from $E/k$ to another
curve $\tilde{E}/k$.
This said, the computation of the isogenous curve $\tilde{E}$ reduces
to calculations in the ideal class group of $\mathcal{O}_\kappa$.\smallskip

More precisely, let $m$ be one of the Elkies prime divisors of $\ell$.
Let
$\Cl(\mathcal{O}_\kappa)
$ be the ideal class group associated to
$\mathcal{O}_\kappa$\,.
We have
\begin{math}
  \vert \text{Cl}(\mathcal{O}_\kappa) \vert = O( \sqrt{ \vert \Delta \vert}
  )\,.
\end{math}
In addition, every ideal class in $\Cl(\mathcal{O}_\kappa)$ contains an
ideal of norm less than $\sqrt{\vert \Delta \vert }$. So,
$\Cl(\mathcal{O}_\kappa)$ is generated by classes of ideals of norm less
than $\sqrt{\vert \Delta \vert }$. Let $\mathfrak{m}$ be an ideal
$(m, a_\mathfrak{m}+b_\mathfrak{m}\,\omega)$ in $\mathcal{O}_\kappa$ that divides
$(m)$,
and write
\begin{math}
  \mathfrak{u}_1\: \mathfrak{m} = \mathfrak{u}_2 \: \prod \limits _{i=1} ^h {\mathfrak{p}_i ^{e_i}}\,,
\end{math}
where $\Norm(\mathfrak{p}_i) \leq \sqrt{\Delta}$ and $\mathfrak{u}_1$ and
$\mathfrak{u}_2$ are two principal ideals. Each prime ideal $\mathfrak{p}_i$
determines an isogeny, and $\mathfrak{u}_1$, $\mathfrak{u}_2$ correspond to
endomorphisms. Their product yields the isogeny defined by the
following chain of small degree isogenies,
\begin{multline}\label{eq:18}
  E \longrightarrow \bigslant{E} { E[\mathfrak{p}_1]} \longrightarrow
  \bigslant{E} { E[\mathfrak{p}_1^2]} \longrightarrow \ldots \longrightarrow
  \bigslant{E} { E[\mathfrak{p}_1^{e_1}]} \longrightarrow\\
  \bigslant{E} { E[\mathfrak{p}_1^{e_1}\mathfrak{p}_2]} \longrightarrow
  \bigslant{E} { E[\mathfrak{p}_1^{e_1}\mathfrak{p}_2^2]} \longrightarrow
  \ldots \longrightarrow \bigslant{E} {
    E[\mathfrak{p}_1^{e_1}\mathfrak{p}_2^{e_2}]} \longrightarrow \\
  \ldots\\
  \bigslant{E} {
    E[\mathfrak{p}_1^{e_1}\mathfrak{p}_2^{e_2}\cdots\mathfrak{p}_{h-1}^{e_{h-1}}
    \mathfrak{p}_h]} \longrightarrow \cdots \longrightarrow \bigslant{E}{
    E[\mathfrak{p}_1^{e_1}\mathfrak{p}_2^{e_2}\cdots
    \mathfrak{p}_{h}^{e_{h}}]}\,.
\end{multline}
We arrive in this way at the isogenous curve
$\tilde{E} = {E}\,/\,{ E[\mathfrak{p}_1^{e_1}\,\mathfrak{p}_2^{e_2}\,\cdots
  \,\mathfrak{p}_{h}^{e_{h}}]}$.\medskip

To avoid the divisions by $p$ that do appear in calculations for computing
explicitly all these isogenies, we reconsider Chain~(\ref{eq:18}) from the
standpoint of the canonical lifts of these curves.
Results by Serre and Tate~\cite{LST64} enable to lift canonically $E$ as
$\mathcal{E}/K$ where $K$ is an unramified extension of $\mathbb{Q}_p$ of
degree $n$ such that $k$ is its residue field.

An algorithm for computing canonical lifts at
$p$-adic precision $N$ in time complexity $O(p^2\,N^2)$ up to
some polylogarithmic factors can be for instance found
in~\cite{Satoh00,SST03}. According to Theorem~\ref{thm:stabnonlinear}
and~\cite[Theorem~2]{lava16}, the lifting process has to be done with
$p$-adic precision equal to $O(\lceil \log _{p} (\ell) \rceil)$ in order to be
able to reduce the results modulo $p$.
Since in this situation the principal ideal $(q)$ splits in
$\mathcal{O}_\kappa$ into two prime ideals, one chooses arbitrarily the
residue field given by one of the two, which allows us to embed integers
from $O_\kappa$ into $K$, $\omega \mapsto \sqrt{-\Delta}$.

Now, starting from $\mathcal{E}$ in Chain~\eqref{eq:18}, we use V\'elu's
formulas to compute for each $\mathfrak{p}_i$ a normalized isogenous
curve~\cite{velu71}.  These formulas require that one knows the kernels
$\mathcal{E}[\mathfrak{p}_i]$. This can be easily done for small degrees,
\textit{e.g.} degree 2, by factoring division polynomials. When this approach
(of cubic complexity in the degree) is too expansive, an alternative approach
is to use modular polynomials. It enables to find the $j$-invariant of the
isogenous curve. Motivated by point counting on elliptic curves, Elkies gave
an elegant method to derive from it an explicit normalized equation for this
isogenous curve. This algorithm, of quadratic complexity in the degree, is far
beyond the scope of this paper and we refer to~\cite{Schoof95} for details.
This process yields a $\prod_{i=1} ^h \mathfrak{p}_i^{e_i}$-isogenous curve
$\tilde{\mathcal{E}}$.
Furthermore, the curve $\tilde{\mathcal{E}}$ is also $m$-isogenous to
$\mathcal{E}$ up to the endomorphism
$\mathfrak{u}_2\mathfrak{u}_1^{-1}$. The differential of this $m$-isogeny is
thus equal to the embedding of $\mathfrak{u}_2\,\mathfrak{u}_1^{-1}$ in $K$\,.

We now iterate this construction for every remaining prime divisor $m$ of
$\ell$, counting multiplicity, and go from one isogenous curve to the
next. We arrive in this way to a $\ell$-isogenous curve, and its
differential. Recall that it is faster to compute the isogeny from $\mathcal
E$ to $\tilde{\mathcal{E}}$ than computing all the $\mathfrak{p}_i$-isogenies
and composing them (see Remark~\ref{rmk:1}).
We call Algorithm~\ref{algo:DiffSolve} to find the solution $z(t)$ of
Eq.~\eqref{eq:nonlinear} modulo $t^{2\ell +2}$. It remains to reduce $z(t)$
modulo $p$ and compute its Pad\'e approximant to recover the rational
function that gives the isogeny.\smallskip

The conclusion of this section is that we can compute an $\ell$-isogenous
curve and a rational representation of the isogeny in quasi-linear time in
$\ell$ when $\ell\gg q$.
\begin{thm}
  \label{thm:9}
  Given an ordinary elliptic curve $E$ defined over a finite field $k$ with
  characteristic $p$ and cardinality $q=p^n$ such that its endomorphism ring
  $\mathcal{O}_\kappa$ is maximal and $\ell$ is an Elkies degree for $E$,
  there exists an algorithm that computes an equation of an $\ell$-isogenous
  curve $\tilde{E}$ of $E$ and the isogeny with time complexity
  $O( n\, \ell + p^2 + q^{3/2})$ up to some polylogarithmic factors.
\end{thm}

\noindent
Under reasonable heuristic assumptions detailed in~\cite{BS11}, the $q^{3/2}$
term can be replaced by $\LL{1/2,\sqrt{3}/2}(q)$ where $L$ denotes the usual
subexponential functions
\begin{displaymath}
  \LL{\alpha,{\mathsf c}}\left(x\right) = \exp(\,\left({\mathsf c}+o(1)\right)\,\left(\log
      x\right)^\alpha\,\left(\log\log x\right)^{1-\alpha}\,)\,.
\end{displaymath}

\begin{remark}
  In some cases, the isogeny differential $c$ is rational and the lifting does
  not need to be canonical. For example if $\ell$ is an integer coprime to
  $p$, then the multiplication map $[\ell] \: : E \longrightarrow E$ is a
  separable isogeny that can be computed by lifting arbitrarily the equation
  of the curve $E$ and taking $c = 1 /{\ell}$ in $\mathbb{Q}_q$.
\end{remark}

\subsection{Irreducible polynomials over finite fields}
\label{sec:fast-comp-irred}

Given a finite field $k$, with characteristic $p$ and cardinality $q=p^n$, and
a degree $d$, the Couveignes-Lercier Las Vegas algorithm achieves a notable
quasi-linear asymptotic complexity in $d$ for computing an irreducible
polynomial of degree $d$ over $k$~\cite{CL13}. It is based on elliptic curves
with a number of points which is divisible by prime divisors of $d$. So, these
curves define separable isogenies whose kernels have only rational points. In
its primary form (see Lemma~\ref{lem:cl}), this algorithm yields a highly
efficient method to calculate an irreducible polynomial when $d$ is a prime not
dividing $p\,(q-1)$ such that $4d\leqslant {q^{\frac{1}{4}}}{}$.
For the sake of completeness, we briefly present this construction in
Section~\ref{sec:constr-citecl13}.

However, we note that when $d$ is not a prime or when $d$ is larger than
$q+ 1 + 2\,\sqrt{q}$, \cite{CL13} necessarily involves the use of the
Kedlaya-Umans algorithm~\cite{KU11}.  Unfortunately, this algorithm is widely
considered impractical, and in our case of interest where $q$ is negligible
compared to $d$, especially the important case $k=\GF{2}$, we can no more rely
on this method.

We show in this section that we can adapt the construction to elliptic curves
that do not necessarily have a cardinality divisible by the prime divisors of
$d$. They simply have to admit rational isogenies of degree $\ell$ where
$\ell$ is of the form $p_1^{e_1}$ or of the form $p_1^{e_1}\,p_2^{e_2}$ with
$p_1$ and $p_2$ odd prime integers. Given an elliptic curve, it yields an
infinite dense list of reachable degrees $d$. Except for the few degrees $d$
that can not be written as $d=\phi(\ell)$ or $d=\phi(\ell)/2$, we more
generally have good expectation to find an elliptic curve that may work for a
degree $d$ fixed in advance.
We develop this aspect in Section~\ref{sec:an-extend-algor}.

\subsubsection{Overview of~\cite{CL13}}
\label{sec:constr-citecl13}

Let $I /k: E/k\rightarrow \tilde{E}/k$ be a degree $\ell$ separable isogeny
where $E/k$ and $\tilde{E}/k$ is given by an affine Weierstrass equation in
$x$, $y$. We denote by $O_E$ and $O_{\tilde{E}}$ the points at infinity of $E$
and $\tilde{E}$.

We assume that $\ell$ is a positive odd number and the kernel $\Ker
I$ is cyclic.
Let $T\in E({\bar k})$ be a generator of $\Ker I$.  Let
$\psi_I(x)\in k[x]$ be the degree $(\ell-1)/2$ polynomial
\begin{equation}\label {eq:psiI}
  \psi_I(x) = \prod_{ 1 \leqslant k \leqslant (\ell-1) /2}(x-x(kT))\,.
\end{equation}
There exists a degree $\ell$ polynomial $\phi_I(x)\in k[x]$ such that the
image of the abscissa of the point $(x,y)$ by $I$ is
$\eta(x)={\phi_I(x)}/{\psi_I^2(x)}$.

Now, let $A$ be a $k$-rational point on $\tilde{E}$ such that
$2A\not =O_{\tilde{E}}$ and let $B \in E({\bar k})$ be a point on $E$ such
that $I(B)=A$.  We can define the degree $\ell$ polynomial
\begin{equation*}
  f_{I,A}(x) = \phi_I(x)-x(A)\psi_I^2(x)\in k[x].
\end{equation*}
Its roots are the $x(B+kT)$ for $0 \leqslant k < d$, and they are pairwise
distinct because $2A\not =O_{\tilde{E}}$.  So $f_{I,A}(x)$ is a degree $\ell$ separable
polynomial. Furthermore, it is reducible if and only if the fiber
$I^{-1}(A)$ is.

This happens to be true when $A$ is a point of order $\ell$, and $\ell$ is a
prime not dividing $p\,(q-1)$ such that it divides exactly the number
$q+1-\Tr E$ of rational points of $E$. In this case, the Weil
polynomial~\eqref{eq:weil} splits as
\begin{math}
  (X - r)\,(X- s) \bmod \ell
\end{math}
such that $r=1 \bmod \ell$ and $s=q \bmod \ell$. Especially the Galois
orbit of $B$ has cardinality $\ell$ since $\phi_q(B) = r\,B$ and the order
of $r$ is $\ell$ in $(\Z/\ell^2\Z)^\times$~\cite[Section 4.2]{CL13}.

When $\ell$ is small enough, we can find with high confidence such an elliptic
curve $E$. All in all, it yields a quite efficient method to compute an
irreducible polynomial.

\begin{lem}[{\cite[Lemma 6 ($\delta=1$)]{CL13}}]
  \label{lem:cl}
  There exists a probabilistic (Las Vegas) algorithm that on input a finite
  field $k$ with characteristic $p$ and cardinality $q=p^n$, a prime integer
  $\ell$ not dividing $p\,(q-1)$ such that $4\ell\leqslant {q^{\frac{1}{4}}}{}$,
  computes an irreducible polynomial in $k[x]$ of degree $\ell$, at the
  expense of
  $\ell \times (\log q)^{5+o(q)} +
  \ell^{1+o(\ell)}\times (\log q)^{1+o(q)}$ elementary
  operations.
\end{lem}

\subsubsection{An extended algorithm}
\label{sec:an-extend-algor}

Let now $\ell$ be an odd Elkies degree, prime to $p\,(q-1)$. The integer
$\ell$ is thus odd. More specifically, it will become clear later on that
$\ell$ is the product of at most two prime powers. With the notations of
Section~\ref{sec:isog-large-degr}, we denote by $\sigma_q$ the image of the
Frobenius endomorphism $\phi _q$ in $\mathcal{O}_\kappa$. In this setting, let
$\mathfrak{l}$ be an ideal in $\mathcal{O}_\kappa$ above $\ell$ and containing
$\sigma_q - r$ where $r\in \Z/{\ell}\Z$ is a root of
$X^2 - \Tr(E)\,X + q \bmod \ell$.

Take for $A$ the point at infinity $O_{\tilde E}$ in the construction of
Section~\ref{sec:constr-citecl13}. We thus consider the polynomial
$f_{I,O_{\tilde E}}(x) = \psi(x)$, whose roots are the abscissas of points in
$\Ker I$.  Now, the factorizations $I = I_{\ell/m} \circ I_m$, where $I_m$
are isogenies of degree $m$ with $m$ any divisor of $\ell$, yield
$\Ker I_m\subset \Ker I$.  Consequently, the polynomial $\psi(x)$ splits as
\begin{displaymath}
  \psi(x) = \prod \limits _{m\,|\,\ell} \Psi_m(x)\,,
\end{displaymath}
with $\deg\Psi_m(x) = \varphi(m)/2$, the Euler's totient function of $m$.
Computing $I_m$ for $m\neq \ell$ by the same procedure as $I$, we can obtain
$\Psi_m(x)$. Dividing $\psi(x)$ by all of them, we are led to examine the
polynomial $\Psi_\ell(x)$, of degree $\varphi(\ell)/2$.
Here too, its irreducibility depends on the order of $r$ in the multiplicative
group $(\Z/{\ell}\Z)^\times$ because the length of the Galois orbit of $B$ is
determined by the relation $\phi_q(B) = r\,B$. The polynomial $\Psi_\ell(x)$
splits thus in factors of degree $d=\Ord_{\Z/{\ell}\Z}(r)/2$ or
$d=\Ord_{\Z/{\ell}\Z}(r)$ according to whether a power of $r$ is equal to $-1$
or not.  When $d=\varphi(\ell)/2$, the polynomial $\Psi_\ell(x)$ is therefore
irreducible.

For the same reasons, take for $B$ any non-zero point of ${E}(k)$ and take
$A=I(B)$, then the polynomial
\begin{displaymath}
  \nu_{I,A}(x) = f_{I, A}(x)\,/\,(\,x - x(B)\,)\,,
\end{displaymath}
of degree $\ell-1$, splits in factors $\Phi_m(x)$ of degree $\varphi(m)$ where
$m\neq 1$ divides $\ell$\,. In turn, $\Phi_\ell(x)$ splits in factors of
degree $\Ord_{\Z/{\ell}\Z}(r)$\,. The polynomial $\Phi_\ell(x)$ is thus
irreducible when $\Ord_{\Z/{\ell}\Z}(r)=\varphi(\ell)$.
\smallskip

The main part of this construction is thus to determine the isogenous curve
$\tilde{E}$ and the equations of the isogeny $I$ following
Theorem~\ref{thm:9}. Therefore, we can state this theorem.

\begin{thm}
\label{thm:10}
Given an ordinary elliptic curve $E$ defined over a finite field $k$ with
characteristic $p$ and cardinality $q=p^n$, and $\ell$, product of at most two
prime powers, an odd Elkies degree prime to $p\,(q-1)$ such that one of the
roots $r$ modulo $\ell$ of the Weil polynomial $X^2 - (\Tr E)\,X + q$ has
order $\varphi(\ell)$ in $(\mathbb{Z}/\ell \mathbb{Z})^\times$, there exists
an algorithm that computes two irreducible polynomials in $k$ of degree
$\varphi(\ell)$ and $\varphi(\ell)/2$ with time complexity
$O( n\, \ell + p^2+ q^{3/2})$ up to some polylogarithmic factors.\smallskip

With same complexity, this algorithm computes an irreducible polynomial of
degree $\varphi(\ell)/2$ when $\Ord_{\Z/{\ell}\Z}(r)=\varphi(\ell)/2$
and $-1\notin \langle r \rangle$\,.
\end{thm}

Note that, since $\ell$ is odd and $\Ord_{\Z/{\ell}\Z}(r)$ can not be larger
than the Carmichael function $\lambda(\ell)$, we have that
$\ell$ is either of the form $p_1^{e_1}$ or of the form
$p_1^{e_1}\,p_2^{e_2}$ where $p_1$ and $p_2$ are odd prime integers. The
former corresponds to the only possibility for $(\Z/{\ell}\Z)^\times$ to be
cyclic (\textit{e.g.}  $\lambda(\ell) = \varphi(\ell)$ and
$\Ord_{\Z/{\ell}\Z}(r)=\lambda(\ell) /2$), the latter (\textit{e.g.}
$\Ord_{\Z/{\ell}\Z}(r)=\lambda(\ell) = \varphi(\ell)/2$\,) follows from the
recursive definition of $\lambda$,
\begin{displaymath}
  \lambda (p_{1}^{e_{1}}\cdot p_{2}^{e_{2}}\cdots p_{k}^{e_{k}})
  =
  \Lcm \left(\lambda (p_{1}^{e_{1}}),\lambda (p_{2}^{e_{2}}),\ldots ,\lambda (p_{k}^{e_{k}})\right)\,.
\end{displaymath}
Also note that $\varphi(\ell)$ is nearly $\ell$ in Theorem~\ref{thm:10}, since the
necessary conditions on $\ell$ yields $\varphi(\ell) = p_1^{e_1-1}(p_1-1)$ or
$\varphi(\ell) = p_1^{e_1-1}(p_1-1)\,p_2^{e_2-1}(p_2-1)$.

\begin{remark}\label{rmk:gf2}
  Applied to the elliptic curve $E\,/\,\GF{2}: y^2+x\,y = x^3 + 1\,$, whose
  Weil polynomial is $X^2+X+2$, this method gives an infinite list of
  irreducible polynomials over $\GF{2}$, of degree
  \begin{multline*}
    3,\ 5,\ 6,\ 10,\ 11,\ 14,\ 21,\ 26,\ 28,\ 30,\ 33,\ 35,\ 39,\ 42,\ 52,\ 53,\ 54,\ 55,\ 56,\ 63,\ 66,\ \\
    70,\ 74,\ 75,\ 78,\ 81,\ 84,\ 89,\ 95,\ 96,\ 98, 105, 106, 108, 110, 112, 119, 131, 138\ldots
  \end{multline*}
  We give for the first ones the degree $\ell$ of the isogeny, the root $r$ of
  $X^2+X+2$ and its order modulo $\ell$ in the following table.\smallskip
  \begin{center}
    \begin{tabular}[rcl]{c||c|cc|c|c|c|c|cc|c|c|c|cc|c|c|c|c}
      $d$          &3&5&5&6&10&11&14&21&21&26&28&30&33&33&35&39&42&52\\\hline
      $\ell$       &7&11&11&7&11&23&29&43&43&53&29&77&67&67&71&79&43&53\\
      $r$          &3&6&4&3&6&13&21&24&18&14&21&59&55&11&31&66&18&14\\
      $\Ord_\ell(r)$&6&10&5&6&10&11&28&21&42&52&28&30&33&66&70&78&42&52
    \end{tabular}
  \end{center}
\end{remark}
\smallskip

\begin{remark}
  Similarly to Remark~\ref{rmk:gf2}, we can easily do an exhaustive search on
  degrees that are not reachable with this method, whatever the field or
  the curve are. The first ones are
  \begin{multline*}
    7,\ 13,\ 17,\ 19,\ 24,\ 25,\ 31,\ 32,\ 34,\ 37,\ 38,\ 43,\ 45,\ 47,\ 49,\
    57,\ 59,\ 61,\ 62,\ 64,\\ 67,\ 71,\ 73,\ 76,\ 77,\ 79,\ 85,\ 87,\ 91,\
    93,\ 94,\ 97,\ 101,\ 103,\ 104,\ 107,\ 109\ldots
  \end{multline*}
  For instance, degree $7$ is not possible because there is no integer $\ell$
  such that $\varphi(\ell)$ equals $7$ or $14$.
\end{remark}

\begin{remark}
  Theorem~\ref{thm:9} and Theorem~\ref{thm:10} can be extended to
  supersingular elliptic curves if we lift them together with a quadratic
  order (see for instance \cite{CH02}).
\end{remark}

\subsection{An example}
\label{sec:an-example}

We consider the finite field $\mathbb{F}_{16} = \mathbb{F}_2(v)$ such that
$v^4 +v + 1 =0$.
Let $E$ be the elliptic curve defined by
$E/\mathbb{F}_{16} \: : y^2 + xy = x^3 + v^6$.  Choose $\ell=73$, the Weil
polynomial of $E$ satisfies
\begin{displaymath}
X^2 + 3X + 16 \equiv (X-10)\,(X-60) \: \bmod \ell\,.
\end{displaymath}
The endomorphism ring of $E$ is isomorphic to the ring of integers
$\mathcal{O}$ of the quadratic field $\mathbb{Q}(\sqrt{-55})$. The class group
$\Cl(\mathcal{O})$ is cyclic of order 4. Let $\mathfrak{l}$ be the ideal of
$\mathcal{O}$ generated by $73$ and $\phi _{16} -60$. The set
$E[\mathfrak{l}]$ is a cyclic subgroup of $E$ of order $73$, closed under the
action of the Frobenius endomorphism. Let
$I \: : E \longrightarrow {E}/{E[\mathfrak{l}]}$ be the degree $73$ isogeny
with kernel $E[\mathfrak{l}]$. We give the first coordinate of $I$ as the
rational fraction ${\phi (x)}/{\psi (x)^2 }$.  Note that $\psi$ is a degree
$36$ irreducible polynomial since $60$ is a generator of the multiplicative
group $\mathbb{F}_{73}^{\times}$.

Let us compute $\psi(x)$. The ideal $\mathfrak{l}$ can be decomposed as
\begin{math}
  \mathfrak{p}\, \mathfrak{l} = \mathfrak{u}_2\,,
\end{math}
where $\mathfrak{p}= ( 2 , (\sqrt{-55} + 1)/2 )$ and
$\mathfrak{u}_2 = ( -\sqrt{-55}+23)/2$. We begin by lifting $E$ in the
$2$-adics such that $\End(E) =$ $\End(\mathcal{E})$ as
\begin{displaymath}
  \mathcal{E}: y^2 + xy = x^3 + 21\,v^3 + 261\,v^2 + 316\,v + 256 +
  O(2^{10})\,.
\end{displaymath}
In order to compute an equation of $\tilde{\mathcal{E}}$ and the differential
isogeny, we first construct the degree $2$ isogeny
\begin{math}
  {\mathcal{E}}\,/\,{\mathcal{E}[\mathfrak{p}]} \cong
  \tilde{\mathcal{E}} \longrightarrow \mathcal{E}\,,
\end{math}
we deduce
\begin{displaymath}
  {\mathcal{E}}\,/\,{\mathcal{E}[\mathfrak{p}]}:   y^2 = x^3 - (27 + O(2^{10}))x + 2(-224\,v^3 + 96\,v^2 - 160,v + 315) + O(2^{11}).
\end{displaymath}
In return, a $73$-isogenous curve $\tilde{E}$ to $E$ is given by
\begin{displaymath}
\tilde{E} \: : y^2 + xy = x^3 + v^{12}\,
\end{displaymath}
and the isogeny differential is
\begin{displaymath}
 c = 244v^3 + 164v^2 - 424v - 299 + O(2^{10})\,.
\end{displaymath}
Applying Algorithm~\ref{algo:DiffSolve} with
$U(t) = 4( 21v^3 + 261v^2 + 316v + 256 + O(2^{10}))\, t^4 + t +4$ and
$ V(t) = 4(v^3 + 123v^2 + 243v + 369 + O(2^{10})) )\,t^4 + t +4$ and $c$, and
reducing modulo $2$ we get the series $z(t)$. A final call to the
\textsc{half-gcd} algorithm yields the irreducible polynomial
\begin{multline*}
  \psi (x) =  \gf{F} + \gf{E}\, x +\gf{7}\, x^{2}+\gf{B}\, x^{3}+\gf{7}\, x^{4}+\gf{B}\, x^{5}+\gf{E}\, x^{6}+\gf{7}\, x^{7}+\gf{2}\, x^{9}+\gf{B}\, x^{10}+\gf{7}\, x^{13} \\
  +\gf{9}\, x^{14} +\gf{E}\, x^{15}+\gf{7}\, x^{16}+\gf{F}\, x^{17}+\gf{6}\, x^{18} +\gf{5}\, x^{19}+\gf{D}\, x^{20}+\gf{6}\, x^{21}+\gf{1}\, x^{22}+\gf{C}\, x^{23}+\gf{7}\, x^{24}\\
  +\gf{B}\, x^{26}+\gf{2}\, x^{27}+\gf{3}\, x^{28}+\gf{2}\, x^{29}+\gf{5}\, x^{30}+\gf{A}\, x^{31}+\gf{C}\, x^{32}+\gf{7}\, x^{33}+\gf{9}\, x^{34}+\gf{D}\, x^{35}+\, x^{36},
\end{multline*}
where for brevity's sake, we represent elements of $\mathbb{F}_{16}$ by
integers written in hexadecimal. In other words, we replace the element $v$ by the integer $2$, for instance $\gf{5}= v^2 +1$ and $\gf{C}=v^3+v^2$.

\appendix

\section{More on our differential equations}
\label{sec:diffeq}

In the previous sections, motivated by the explicit computation of
isogenies in characteristic $2$, we introduced and studied the following
nonlinear $2$-adic differential equation:
\begin{equation}
\label{eq:nonlinear1}
U \cdot z'{}^2 = V(z)
\end{equation}
where $U$ and $V$ are two series in $\Kt$ with $t$-adic valuation $1$.
Most of our attention was actually focused on the particular case where
the hypothesis \HU is satisfied, in which case Eq.~\eqref{eq:nonlinear1}
can be rewritten as follows:
\begin{equation}
\label{eq:nonlinear2}
t(t{-}4a) \cdot z'{}^2 = g^2 \cdot h(z).
\end{equation}
Here $a$ is a given element in $\ZZ^\times$ (or more generally
$\OK^\times$ where $K$ is a finite extension of $\QQ$), $g$ and $h$ are
given analytic functions and the unknown is~$z$.
In this appendix, we aim at revisiting our results and extracting
from them theoretical information about the structure of the solutions
of Eqs.~\eqref{eq:nonlinear1} and \eqref{eq:nonlinear2}.

\subsection{Some spaces of analytic functions}

As before, we fix a finite extension $K$ of $\QQ$ and denote by
$|\cdot|$ the norm on it, normalized by $|2| = 1/2$.
We set $\calV = \Kt$; it is the space of germs of analytic functions
around $0$. Given a positive real number $r$, we let $\calV_r$ be
the subset of $\calV$ defined by
$$\calV_r =
\Big\{\, \sum_{n=0}^\infty a_n t^n \quad \text{such that }
|a_n| \: r^n \text{ is bounded} \,\Big\}.$$
Series in $\calV_r$ converge when $|t| < r$ and thus define analytic
functions in the open disc of centre~$0$ and radius~$r$, denoted by
$B(r)$ in what follows.
Thanks to ultrametricity, these functions are moreover
all bounded on $B(r)$.
We equip $\calV_r$ with the Gauss norm $\Vert \cdot \Vert_r$ defined
by
$$\Vert f \Vert_r = \sup_{n \geq 0}\, |a_n| \: r^n
\quad \text{where }
f = \sum_{n=0}^\infty a_n t^n.$$
One can check that $\calV_r$ is complete with respect to $\Vert \cdot
\Vert_r$. Besides, it is obvious that, when $r \leq s$, we have
$\calV_s \subset \calV_r$ and $\Vert f \Vert_r \leq \Vert f
\Vert_s$ for all $f \in \calV_s$.
It is finally easy to check that the Gauss norm is compatible with
multiplication in the following sense: for all positive real number $r$
and all functions $f, g \in \calV_r$, we have $\Vert fg \Vert_r \leq
\Vert f\Vert_r \cdot \Vert g \Vert_r$.

\subsubsection*{The operator $\psi_+$}

In Section~\ref{subsec:linear}, we have introduced a linear automorphism
$\psi_+$ of $\Kt$ which takes a function $f \in \Kt$ to the unique solution
of the following linear differential equation:
$$t (t{-}4a) y' + (t{-}2a) y = f.$$
For all positive real number $r$, we set $\calV_{r,+} = \psi_+^{-1}
(\calV_r)$ and equip this space with the norm $\Vert \cdot \Vert_{r,+}$
defined by $\Vert f \Vert_{r,+} = \Vert \psi_+(f) \Vert_r$.
Clearly $\psi_+$ induces a bijective isometry $\psi_+ : \calV_{r,+} \to
\calV_r$. Besides, the equality
$t (t{-}4a)\:\psi_+(f)' + (t{-}2a)\:\psi_+(f) = f$
ensures that $\calV_{r,+} \subset \calV_r$ and
$$\textstyle \Vert f \Vert_r
\leq \max\big(\frac 1 2, r \big) \cdot \Vert \psi_+(f) \Vert_r
= \max\big(\frac 1 2, r \big) \cdot \Vert f \Vert_{r,+}$$
for all $r > 0$ and all $f \in \calV_{r,+}$.
The estimates of Proposition~\ref{prop:linear} allow us to derive
inequalities in the other direction.

\begin{prop}
\label{prop:controlnorm}
Let $r$ and $s$ be two real numbers such that $0 < r < s \leq 1$.
Then $\calV_s \subset \calV_{r,+}$ and, for all $f \in \calV_s$,
we have the estimation
$$\Vert f \Vert_{r,+} \leq \max\left(2, \,\frac 2 {\log(s/r)}\right)
\cdot \Vert f \Vert_s.$$
\end{prop}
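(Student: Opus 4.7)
The plan is to combine the coefficient-wise bound of Proposition~\ref{prop:linear} with the definition of the Gauss norm, and then reduce the estimate to a one-variable optimization of $(i{+}1)\rho^i$ with $\rho = r/s$.

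Write $y = \psi_+(f) = \sum_i y_i t^i$ and $f = \sum_i f_i t^i$. Proposition~\ref{prop:linear} gives $\val(y_i) \geq \min_{0 \leq k \leq i} \val(f_k) - \lfloor \log_2(i{+}1)\rfloor - 1$, which under the normalization $|2| = 1/2$ translates into
$$|y_i| \;\leq\; 2 \cdot 2^{\lfloor \log_2(i+1) \rfloor} \max_{0 \leq k \leq i} |f_k| \;\leq\; 2(i{+}1)\, \max_{0 \leq k \leq i} |f_k|.$$
For $f \in \calV_s$, the bound $|f_k| \leq \Vert f \Vert_s / s^k$ combined with $s \leq 1$ (so $k \mapsto s^{-k}$ is non-decreasing) yields $|f_k| \leq \Vert f \Vert_s / s^i$ for all $k \leq i$. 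Hence
$$|y_i|\,r^i \;\leq\; 2(i{+}1)\,(r/s)^i\, \Vert f \Vert_s,$$
and taking the sup over $i$ gives $\Vert f \Vert_{r,+} = \Vert y \Vert_r \leq 2\, \Vert f \Vert_s \cdot \sup_{i \geq 0}\,(i{+}1)\rho^i$ with $\rho = r/s \in (0,1)$.

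It remains to bound $S := \sup_{i \geq 0}(i{+}1)\rho^i$ by $\max\bigl(1,\, 1/\log(s/r)\bigr)$. I would study the continuous function $x \mapsto (x{+}1)\rho^x$ on $[0,\infty)$: its derivative $\rho^x\bigl(1 + (x{+}1)\log\rho\bigr)$ vanishes precisely at $x_0 + 1 = 1/\log(1/\rho)$. When $\rho \leq 1/e$ one has $x_0 \leq 0$, so the maximum on $[0,\infty)$ is attained at $x=0$ with value $1$, while $\log(1/\rho) \geq 1$ so $\max(1, 1/\log(s/r)) = 1$ and the claim holds. When $\rho > 1/e$, plugging $x_0$ back in and using $\rho^{1/\log(1/\rho)} = e^{-1}$ yields the max value $\frac{1}{e\rho \log(1/\rho)}$, which is $\leq \frac{1}{\log(1/\rho)} = \frac{1}{\log(s/r)}$ exactly because $e\rho > 1$. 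In both cases $S \leq \max(1, 1/\log(s/r))$, and multiplying by the factor $2$ gives the proposition.

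Since the first step already proves that $\sum y_i t^i$ converges on $B(r)$ whenever $f \in \calV_s$, it also establishes the inclusion $\calV_s \subset \calV_{r,+}$. The argument has no real obstacle: everything reduces to the valuation estimate of Proposition~\ref{prop:linear} and an elementary calculus exercise; the only thing to be careful about is using $s \leq 1$ so that the max of $|f_k|$ for $k \leq i$ is controlled by $\Vert f \Vert_s / s^i$, and not to confuse the critical point of the continuous optimization when $\rho$ is too small.
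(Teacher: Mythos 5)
Your proof is correct and follows essentially the same route as the paper: the coefficient bound from Proposition~\ref{prop:linear} giving $|y_i| \leq 2(i{+}1)\max_{k\leq i}|f_k|$, the reduction to $\sup_i (i{+}1)(r/s)^i$ using $s\leq 1$, and the same elementary optimization of $x\mapsto (x{+}1)\rho^x$ split according to whether $\rho\leq e^{-1}$. No gaps.
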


\begin{proof}
We write $f = \sum\limits_{i=0}^\infty f_i t^i$ and $\psi_+(f) =
\sum\limits_{i=0}^\infty y_i t^i$. From Proposition~\ref{prop:linear},
we deduce that
$$|y_i| \leq 2 \cdot (i+1) \cdot \sup_{0 \leq k \leq i} |f_k|.$$
Multiplying by $r^i$ on each side and noticing that $|f_k| \:r^i
\leq \big(\frac r s\big)^i \: \Vert f \Vert_s$ for all $k \leq i$, we
derive  $|y_i| \: r^i \leq 2 {\cdot} (i+1) {\cdot} \big(\frac r s\big)^i
\: \Vert f \Vert_s$.
By calculus, we prove that, for any $a \in ]0,1[$, the maximum of the
function $x \mapsto
(x{+}1) \: a^x$ is reached for $x_0 = \max(0,\, -1 - 1/{\log a})$
and is equal to $1$ if $a \leq e^{-1}$ and to ${-1}/{(e a \log a)}$
otherwise. (Here $e \approx 2.718...$ denotes the natural base of
logarithms.) We deduce from
this that the function $x \mapsto (x{+}1) \: a^x$ is bounded from above
by $\max(1,\, {-1}/{\log a})$ on the interval $]0, +\infty[$.
The proposition follows, noticing that $\Vert f \Vert_{r,+} =
\Vert \psi_+(f) \Vert_r = \sup_{i \geq 0} |y_i| r^i$ by definition.
\end{proof}

\subsection{Generic radius of convergence}

We now come back to the nonlinear differential equations
\eqref{eq:nonlinear1} and \eqref{eq:nonlinear2}; we are interested in the
radius of convergence of their solutions. We recall that the radius of
convergence of a function $f \in \Kt$ is defined as the supremum of the
nonnegative real numbers $r$ for which $f \in \calV_r$. In the sequel, we
will denote it by $\RoC(f)$ for short. If $f = \sum_{n=0}^\infty a_n t^n$,
we have the classical explicit formula
$$\RoC(f) = \liminf_{n \to \infty}\,\, |a_n|^{-1/n}.$$
A general theorem indicates that the radii of convergence of the
solutions of Eq.~\eqref{eq:nonlinear1} are strictly positive as soon
as $U$ and $V$ have positive radii of convergence as well. The next
proposition makes this result effective in our setting.

\begin{prop}
\label{prop:RoC}
Let $U, V \in \Kt$ with $t$-adic valuation $1$.
We assume that $U \in \calV_r$ and $V \in \calV_s$ for some positive
real numbers $r$ and $s$. Let $z$ be the unique
solution of Eq.~\eqref{eq:nonlinear1} in $t \Kt$
(\emph{cf} Proposition~\ref{prop:solnonlinear}).
Then,
$$\RoC(z) \geq
\min\left(
\frac{r s^2 \cdot |U'(0)|^2}{\Vert U \Vert_r{\cdot}\Vert V \Vert_s},\,
\frac{r^2 \cdot |U'(0)|}{\Vert U \Vert_r}
\right).$$
\end{prop}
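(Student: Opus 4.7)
The plan is to prove the stronger statement $\Vert z \Vert_\rho \leq C$, where $\rho$ denotes the right-hand side of the claimed lower bound and $C := |v_1|\,\rho/|u_1|$ (with $u_1 = U'(0)$ and $v_1 = V'(0)$). This amounts to showing, by induction on $n \geq 1$, that $|z_n|\,\rho^n \leq C$.

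The starting point is the coefficient-wise identification of $U\,(z')^2 = V(z)$ used in the proof of Proposition~\ref{prop:solnonlinear}. Isolating the $z_n$-contributions on both sides gives the recursion
\[
(2n-1)\,v_1\,z_n \;=\; B_n - A_n,
\]
where $A_n$ is the finite sum of terms $jl\,u_i\,z_j\,z_l$ over indices $i+j+l = n+2$, $i,j,l\geq 1$, $(j,l)\notin\{(n,1),(1,n)\}$ (forcing $j,l \leq n-1$), and $B_n = \sum_{k\geq 2} v_k \cdot [t^n]\,z^k$, which likewise involves only $z_1,\dots,z_{n-1}$. Since $2n-1$ is odd, $|2n-1|=1$ in $K$, and ultrametricity yields $|z_n| \leq |v_1|^{-1}\max(|A_n|,|B_n|)$. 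The base case $n=1$ is $z_1 = v_1/u_1$, which gives $|z_1|\,\rho = C$ by the very choice of $C$.

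The heart is to bound $A_n$ and $B_n$ using the induction hypothesis $|z_j|\leq C\,\rho^{-j}$. For $A_n$, I would split by the value of $i$: the $i=1$ contributions are bounded by $|u_1|\,C^2/\rho^{n+1}$; the $i\geq 2$ contributions, using $|u_i|\leq \Vert U \Vert_r/r^i$ and $\rho\leq r$ (which holds since $\rho \leq r^2|u_1|/\Vert U \Vert_r \leq r$), are bounded by $\Vert U \Vert_r\,C^2/(r^2\rho^n)$, because $(\rho/r)^i \leq (\rho/r)^2$ for $i \geq 2$. For $B_n$, the estimates $|v_k|\leq \Vert V \Vert_s/s^k$ and $(C/s)^k\leq (C/s)^2$ for $k\geq 2$ (valid because $C\leq s$, which follows from $\rho\leq s^2|u_1|/\Vert V \Vert_s$ and $|v_1|\,s\leq\Vert V \Vert_s$) yield $|B_n|\leq \Vert V \Vert_s\,C^2/(s^2\rho^n)$. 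Combining,
\[
|z_n|\,\rho^n \;\leq\; \frac{C^2}{|v_1|}\cdot\max\!\Bigl(\frac{|u_1|}{\rho},\ \frac{\Vert U \Vert_r}{r^2},\ \frac{\Vert V \Vert_s}{s^2}\Bigr).
\]

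The two hypotheses in the definition of $\rho$ are calibrated so that this maximum equals $|u_1|/\rho$: the bound $\rho\leq r^2|u_1|/\Vert U \Vert_r$ gives $\Vert U \Vert_r/r^2\leq |u_1|/\rho$, and the bound $\rho\leq rs^2|u_1|^2/(\Vert U \Vert_r\,\Vert V \Vert_s)$, together with the trivial $\Vert U \Vert_r\geq r|u_1|$, gives $\Vert V \Vert_s/s^2\leq |u_1|/\rho$. Hence $(C^2/|v_1|)\cdot(|u_1|/\rho) = C$ by the definition of $C$, closing the induction. The delicate point is the choice $C = |v_1|\rho/|u_1|$: it makes the base case tight while leaving the inductive step just enough room. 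Equally important is splitting $A_n$ at $i=1$, because a uniform bound on $A_n$ via $\Vert U \Vert_r$ alone would yield a factor $\Vert U \Vert_r/(r\rho)$ instead of $|u_1|/\rho$, strictly worse and not fitting the proposition's bound on $\rho$.
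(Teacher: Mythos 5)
Your proof is correct and follows essentially the same route as the paper's: an induction establishing $|z_n|\leq C\rho^{-n}$ from the recursion of Proposition~\ref{prop:solnonlinear}, using ultrametricity via $|2n-1|=1$ and splitting the $U$-side contributions according to whether the coefficient involved is $u_1$ or $u_i$ with $i\geq 2$. The only (harmless) differences are that you skip the normalization $s=1$, $\Vert V\Vert_s=1$ and use the single constant $C=|v_1|\rho/|u_1|$ for all $n\geq 1$, where the paper uses the slightly smaller $\rho^2|v_1|/|u_1|^2$ for $n\geq 2$.
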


\begin{proof}
Performing the change of function $z(t) = y(\lambda t)$ for a well
chosen $\lambda$ in a suitable extension of $K$, we may assume without
loss of generality that $s = 1$. Up the rescaling $U$ and $V$ by the
same constant, we may further suppose that $\Vert V \Vert_1 = 1$.
We write
$$U = \sum\limits_{i=1}^\infty u_i t^i, \quad
V = \sum\limits_{i=1}^\infty v_i t^i, \quad
z = \sum\limits_{i=1}^\infty z_i t^i$$
with $u_i, v_i, z_i \in K$. Observe that $U'(0) = u_1$.
Moreover, by definition of the Gauss norm, we know that
$|u_i| \leq \Vert U \Vert_r \: r^{-i}$ and $|v_i| \leq 1$ for all $i$.
We set
$$\rho = \min\left(
\frac{r{\cdot}|u_1|^2}{\Vert U \Vert_r},\,
\frac{r^2{\cdot}|u_1|}{\Vert U \Vert_r}
\right)
\quad \text{and} \quad
C = \frac{\rho^2{\cdot}|v_1|}{|u_1|^2}.$$
We are going to prove by induction that
$|z_n| \leq C \cdot \rho^{-n}$ for all $n \geq 2$.
This will directly imply the proposition.

We consider an integer $n \geq 2$.
From Eq.~\eqref{eq:faadibruno} (obtained in the proof of
Proposition~\ref{prop:solnonlinear}),
we derive $|z_n| \leq |v_1|^{-1} \cdot \max(A, B)$ with
\begin{align*}
A & = \max_I \big(|z_1|^{k_1} \cdots |z_{n-1}|^{k_{n-1}}\big)  \\
B & = \Vert u \Vert_r \cdot
\max_J \big(|z_{j+1}| \cdot |z_{i-j+1}| \cdot r^{i-n}\big)
\end{align*}
where $I$ is the set of all tuples of nonnegative
integers $(k_1, \ldots, k_{n-1})$ such that $k_1 + 2 k_2 + \cdots +
(n{-}1) k_{n-1} = n$ and $J$ is the set of pairs $(i,j) \in \Z^2$
with $0 \leq j \leq i < n$ and $0 < j < n{-}1$ if $i = n{-}1$.
Let $(k_1, \ldots, k_{n-1}) \in I$.
From the induction hypothesis, we deduce
$$|z_1|^{k_1} \cdots |z_{n-1}|^{k_{n-1}}
\leq C_1^{k_1} \cdot C^{k_2 + \cdots + k_{n-1}} \cdot \rho^{-n}$$
where $C_1$ is defined by
$C_1 = \rho \cdot |z_1| = \rho \cdot \frac{|v_1|}{|u_1|} =
\sqrt{|v_1| \cdot C}$.
Our estimation then becomes
$$|z_1|^{k_1} \cdots |z_{n-1}|^{k_{n-1}}
\leq \left(\frac{C}{|v_1|}\right)^{\frac k 2 + k'}
\cdot |v_1|^{k + k'} \cdot \rho^{-n}$$
where, for simplicity, we have set $k = k_1$ and $k' = k_2 +
\cdots + k_{n-1}$.
On the other hand, from the definition of $\rho$,
we deduce that $\rho \leq \frac{r{\cdot}|u_1|^2}{\Vert U \Vert_r}
\leq |u_1|$; using then the definition of $C$, we find $C \leq |v_1|$.
Noticing further that $|v_1| \leq 1$ and that, necessarily,
$\frac k 2 + k' \geq 1$ and $k + k' \geq 2$, we end up with
$$|z_1|^{k_1} \cdots |z_{n-1}|^{k_{n-1}}
\leq \frac{C}{|v_1|} \cdot |v_1|^2 \cdot \rho^{-n}
= C \cdot |v_1| \cdot \rho^{-n}.$$
Taking the supremum over all $(k_1, \ldots, k_{n-1}) \in I$,
we are finally left with $A \leq C \cdot |v_1|\cdot \rho^{-n}$.

Let us now focus on $B$. We consider a pair $(i,j) \in J$. We first assume
that $i < n{-}1$. Clearly one of the indices $j{+}1$ or $i{-}j{+}1$ must be
strictly greater than $1$. We then deduce from the induction hypothesis that
$|z_{j+1}| \cdot |z_{i-j+1}| \cdot r^{i-n} \leq C_1 \cdot C \cdot \rho^{-i-2}
\cdot r^{i-n}$ where $C_1 = \rho \cdot \frac{|v_1|}{|u_1|}$ is the constant we
have introduced in the first part of the proof.  We rewrite the above
inequality as follows
$$\Vert U \Vert_r \cdot |z_{j+1}| \cdot |z_{i-j+1}| \cdot r^{i-n}
\leq \frac{\rho {\cdot} \Vert U\Vert_r}{r^2{\cdot}{u_1}}
\cdot \left(\frac{\rho} r\right)^{n-i-2} \cdot
C \cdot |v_1| \cdot \rho^{-n}.$$
From the definition of $\rho$, it is clear that
$\rho \leq \frac{r^2 \cdot |u_1|}{\Vert U \Vert_r}$, implying that
the first factor $\frac{\rho{\cdot}\Vert U\Vert_r}{r^2{\cdot}{u_1}}$
is at most $1$. Similarly, using $r{\cdot}|u_1| \leq \Vert U\Vert_r$,
we deduce that the quotient $\frac \rho r$ is at most $1$ as well.
Since the exponent $n{-}i{-}2$ is nonnegative by assumption, we find
\begin{equation}
\label{eq:majB}
\Vert U \Vert_r \cdot |z_{j+1}| \cdot |z_{i-j+1}| \cdot r^{i-n}
\leq C \cdot |v_1| \cdot \rho^{-n}
\end{equation}
in this case. We now consider the case where $i = n{-}1$.
By definition of $J$, we cannot have $j = 0$ or $j = n{-}1$. Thus
both indices $j{+}1$ and $i{-}j{+}1 = n{-}j$ are strictly greater
than $1$ and the induction hypothesis yields
\begin{align*}
\Vert U \Vert_r \cdot |z_{j+1}| \cdot |z_{n-j}| \cdot r^{-1}
 & \leq \Vert U \Vert_r \cdot C^2 \cdot \rho^{-n-1} \cdot r^{-1} \\
 & = \frac {C \cdot \Vert U \Vert_r}{\rho r \cdot |v_1|}
     \cdot C \cdot |v_1| \cdot \rho^{-n}
   = \frac {\rho \cdot \Vert U \Vert_r} {r \cdot |u_1|^2}
     \cdot C \cdot |v_1| \cdot \rho^{-n}
\end{align*}
the last equality coming from the very first definition of $C$.
It now follows from the definition of $\rho$ that the factor
$\frac {\rho \cdot \Vert U \Vert_r} {r \cdot |u_1|^2}$ is at most
$1$, implying that the inequality~\eqref{eq:majB} is also valid
when $i = n{-}1$. Taking the supremum over all $(i,j) \in J$, we
obtain $B \leq C \cdot |v_1|\cdot \rho^{-n}$.

Coming back to the estimation $|z_n| \leq |v_1|^{-1} \cdot \max(A, B)$,
we finally obtain $|z_n| \leq C \cdot \rho^{-n}$ and the induction
goes.
\end{proof}

\subsection{Overconvergence phenomena}

Under Assumptions \HU and \HV, Proposition~\ref{prop:RoC} shows that
the radius of convergence of the solution of Eq.~\eqref{eq:nonlinear1} is
at least $1/4$ (by taking $r = 1/4$ and $s = 1$).
Nonetheless, there do exist particular choices of
$U$ and $V$ for which the solution $z$ overconverges beyond this radius.
For example, when $U$ and $V$ are built from the equations of two isogenous
elliptic curves as in Eq.~\eqref{eq:isog2} (\emph{cf} page \pageref{eq:isog2}),
we know that $z$ has integral coefficients; hence, its radius of convergence
is at least $1$.
One may wonder if such examples are isolated or not; in what follows,
we prove a first result in this direction showing that the
overconvergence phenomenon we observed persists when the differential
equation is slightly perturbed.

From now on, we work with the differential equation \eqref{eq:nonlinear2}
(which is a particular case of Eq.~\eqref{eq:nonlinear1}).
We fix $h \in \calV_1$ with $t$-adic valuation $1$, \emph{i.e.} $h(0) = 0$
and $h'(0) \neq 0$. Let $\Omega$ denote the subset of $\Kt$ consisting of
series with non-vanishing constant coefficient. By
Proposition~\ref{prop:linear}, we know that Eq.~\eqref{eq:nonlinear2}
admits a unique solution $z_g \in t \Kt$ for all $g \in \Omega$.

\begin{prop}
\label{prop:isometrynonlinear}
Let $r \in ]0,1[$. We consider $g \in \calV_r$ satisfying the two
following assumptions:
\begin{enumerate}[(a)]
\item $g$ does not vanish on the open ball of centre $0$ and radius $r$
in an algebraic closure of $K$,
\item the solution $z_g$ of Eq.~\eqref{eq:nonlinear2} is in $\calV_r$.
\end{enumerate}
Then, for all $\gamma_1, \gamma_2 \in \calV_{r,+}$ such that
$$\Vert \gamma_i \Vert_{r,+} <
\min\left(\Vert z'_g \Vert_r, \,
\frac{\Vert g \Vert_r^2}{4{\cdot}\Vert z'_g \Vert_r}\right)
\quad \text{for } i \in \{1,\, 2\}$$
we have $\displaystyle
\frac{z_{g+\gamma_1} - z_{g + \gamma_2}}{t(t{-}4a)} \in \calV_r$ and
$\displaystyle
\left\Vert \frac{z_{g+\gamma_1} - z_{g + \gamma_2}}{t(t{-}4a)}
\right\Vert_r \leq \Vert \gamma_1 - \gamma_2 \Vert_{r,+}$.
\end{prop}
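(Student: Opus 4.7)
The plan is to adapt the precision-theoretic framework used in Section~\ref{subsec:precision-analysis} to the analytic setting, replacing the truncated spaces $\Kt/(t^n)$ by the analytic spaces $\calV_r$ and $\calV_{r,+}$. First, I would define
$\varphi : \{g \in \calV_r \text{ satisfying (a) and (b)}\} \to \calV_r$ by $\varphi(g) = z_g/(t(t{-}4a))$ and verify, as in Proposition~\ref{prop:dphin}, that its differential at $g$ is
$$
d\varphi(g)(\deltag) \;=\; z'_g \, g^{-1}\, \psi_+(\deltag).
$$
The derivation is formally identical to the one carried out in the truncated setting: differentiate the defining equation $t(t{-}4a)(z'_g)^2 = g^2\,h(z_g)$ with respect to $g$, perform the change of function $\deltaz = t(t{-}4a)\, z'_g\, g^{-1}\, y$, and apply Lemma~\ref{lem:psipm} to recognize $\psi_+$. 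Hypothesis~(a) guarantees that $g$ is invertible in $\calV_r$, so every factor in the formula for $d\varphi(g)$ lies in $\calV_r$ and $d\varphi(g)$ is a well-defined continuous map from $\calV_{r,+}$ to $\calV_r$.

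The core of the argument is then an analytic version of Proposition~\ref{prop:optimalprecision}. I would introduce the maps
$$
\theta : \gamma \mapsto \varphi(g+\gamma) - \varphi(g),
\qquad
\tau(\zeta,\gamma) : \deltag \mapsto
\frac{z'_g + (2t{-}4a)\zeta + t(t{-}4a)\zeta'}{g + \gamma}\cdot \psi_+(\deltag),
$$
defined on appropriate open subsets of $\calV_{r,+}$ (and of $\calV_r \times \calV_{r,+}$), and exploit the identity $d\theta = \tau \circ (\theta,\Id)$, which is formally the same as in Section~\ref{subsec:precision-analysis}. The two quantitative bounds $\|\gamma_i\|_{r,+} < \|z'_g\|_r$ and $\|\gamma_i\|_{r,+} < \|g\|_r^2/(4\|z'_g\|_r)$ are designed to guarantee, on the one hand, that $g + \gamma$ still satisfies hypothesis~(a) (so that $\tau$ is defined), and on the other hand, that the Legendre-transform inequality $\Lambda(\theta)_{\geq 2}(x) < x$ (analog of Lemma~\ref{lem:Lambda}) holds on the relevant range of $x$ associated with the ball $B_{\calV_{r,+}}(\delta)$, where $\delta = \max(\|\gamma_1\|_{r,+},\|\gamma_2\|_{r,+})$.

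With this $\Lambda$-bound in place, the Caruso--Roe--Vaccon principle (\cite[Proposition~3.12]{carova14}) then yields
$\varphi\bigl(g + B_{\calV_{r,+}}(\delta)\bigr) = \varphi(g) + d\varphi(g)\bigl(B_{\calV_{r,+}}(\delta)\bigr)$, and an isometry argument analogous to Lemma~\ref{lem:isometry} (which uses that $\psi_+$ is already an isometry $\calV_{r,+} \to \calV_r$ by construction) gives the desired bound in the one-variable case $\gamma_2 = 0$. The two-variable statement follows by applying the one-variable result with base point $g + \gamma_2$, after a routine check that $g+\gamma_2$ satisfies hypotheses~(a) and~(b) thanks to ultrametricity and the smallness of $\|\gamma_2\|_{r,+}$. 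The main obstacle is transplanting the precision-theoretic machinery from the discretely-valued, truncated setting of Section~\ref{subsec:precision-analysis} to the continuously-valued, analytic one, and verifying the $\Lambda$-inequality with the exact constants of the hypothesis; Proposition~\ref{prop:controlnorm}, which allows passage between $\|\cdot\|_{r,+}$ and $\|\cdot\|_s$ for some $s > r$, should be the key quantitative tool.
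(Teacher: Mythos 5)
Your plan follows the paper's proof essentially step for step: the same maps $\theta$ and $\tau$, the identity $d\theta=\tau\circ(\theta,\Id)$, a bound on $\Lambda(\tau)$ fed into \cite[Proposition~2.5]{carova15} and \cite[Proposition~3.12]{carova14}, the isometry property of $\psi_+$, and the reduction of the two-variable statement to the case of a single perturbation by translating the base point. Two remarks on the details you leave open. First, the paper does not work directly in the infinite-dimensional spaces $\calV_{r,+}$ and $\calV_r$: it truncates modulo $t^n$, works in $E_n=\calV_{r,+}/t^n\calV_{r,+}$ and $F_n=\calV_r/t^n\calV_r$ with the induced norms, and passes to the limit in $n$ only at the very end; this is what lets it invoke the differential-precision results for maps that are honestly polynomial in finitely many variables. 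Second --- and this is the one ingredient your sketch defers entirely --- the constants $\min\big(\Vert z'_g\Vert_r,\ \Vert g\Vert_r^2/(4\Vert z'_g\Vert_r)\big)$ do not drop out of the raw $\Lambda$-computation; they come from rescaling the norm on the source space (following \cite[Remark~2.6]{carova15}): one replaces $E_n$ by a copy $\tilde E_n$ with $\Vert f\Vert_{\tilde E_n}=\frac{\Vert z'_g\Vert_{F_n}}{\Vert g\Vert_{F_n}}\cdot\Vert f\Vert_{E_n}$, so that the hypothesis $\Vert\tilde\gamma\Vert_{\tilde E_n}<\Vert z'_g\Vert_{F_n}$ translates into $\Vert\gamma\Vert_{E_n}<\Vert g\Vert_{F_n}$, which is exactly what is needed to get $\Vert(g+\gamma)^{-1}\Vert_{F_n}=\Vert g\Vert_{F_n}^{-1}$ and hence $\Lambda(\tau_n)\leq\log\Vert z'_g\Vert_{F_n}-\log\Vert g\Vert_{F_n}$ on the relevant range; the stated threshold then emerges because $\tfrac12\Vert g\Vert$ is the geometric mean of $\Vert z'_g\Vert$ and $\Vert g\Vert^2/(4\Vert z'_g\Vert)$. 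Finally, Proposition~\ref{prop:controlnorm} is not the key tool here: it is used only afterwards, in Corollary~\ref{cor:semicontinuity}, to convert the $\Vert\cdot\Vert_{r,+}$-hypothesis into a Gauss-norm one.
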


\begin{remark}
By Weierstrass Preparation Theorem, Assumption (a) is equivalent
to the fact that $\Vert g \Vert_r = |g(0)|$, \emph{i.e.} the maximum
of $g$ is reached at the origin. Besides, it implies that $g^{-1} \in
\calV_r$ as well and $\Vert g^{-1} \Vert_r = \Vert g \Vert_r^{-1}$.
\end{remark}

\begin{proof}[Proof of Proposition~\ref{prop:isometrynonlinear}]
We follow the proof of Proposition~\ref{prop:optimalprecision}. We
fix a positive integer $n$. We set $E_n = \calV_{r,+}/t^n \calV_{r,+}$
and $F_n = \calV_r/t^n\calV_r$ and equip them with the induced norms.
As $K$-vector spaces, both $E_n$ and $F_n$ are canonically isomorphic
to $\Kt/(t^n)$. However, the norms on them differ; we have
\begin{align*}
\Vert a_0 + a_1 t + \cdots + a_{n-1} t^{n-1} \Vert_{F_n}
& = \sup_{0 \leq i < n} |a_i|\: r^i \\
\Vert f \Vert_{E_n} & = \Vert \psi_{+,n}(f) \Vert_{F_n}
\end{align*}
for $a_0, \ldots, a_{n-1} \in K$ and $f \in E_n$.
As in Section~\ref{subsec:precision-analysis}, we consider the analytic
function
$$\begin{array}{rcl}
\theta_n : \quad W_n & \longrightarrow & F_n \smallskip \\
\gamma & \mapsto & \displaystyle \frac{z_{g+\gamma} - z_g}{t(t{-}4a)}
\end{array}$$
where the domain $W_n$ is the open subset of $E_n$ consisting of series
$\gamma$ for which $g{+}\gamma$ does not vanish at $0$.
Proposition~\ref{prop:dphin} shows that the differential of $\theta_n$
at a point $\gamma \in W_n$ is given by
\begin{equation}
\label{eq:dthetan}
d \theta_n(\gamma) : \deltag \mapsto
z'_{g+\gamma} \cdot (g{+}\gamma)^{-1} \cdot \psi_{+,n}(\deltag).
\end{equation}
Following \cite[Remark~2.6]{carova15}, we introduce a copy $\tilde E_n$
of $E_n$ equipped with the modified norm defined as follows:
$$\Vert f \Vert_{\tilde E_n} =
\frac{\Vert z'_g \Vert_{F_n}}{\Vert g \Vert_{F_n}} \cdot \Vert f \Vert_{E_n}.$$
Here $f$ denotes at the same time a series in $E_n$ and its copy
in $\tilde E_n$. In order to avoid similar confusions in the future,
we introduce the mapping $\Id : E_n \to \tilde E_n$ taking a series
in $E_n$ to its counterpart in $\tilde E_n$. We set $\tilde W_n =
\Id(W_n)$.
We deduce from Eq.~\eqref{eq:dthetan} that $\theta_n$ is solution of the
differential equation $d \theta_n = \tau_n \circ (\theta_n, \Id)$
where $\tau_n$ is defined by
$$\begin{array}{r@{\hspace{0.5ex}}c@{\hspace{0.5ex}}lcl}
\tau _n : \quad  F_n &\times& \tilde W_n
 &\longrightarrow&  \Hom({E_n},{F_n} ) \smallskip \\
(\zeta&, & \tilde \gamma) & \mapsto & \displaystyle
\left(\deltag \:\mapsto \:
\frac{z'_g + t(t{-}4a) \zeta' + 2(t{-}2a) \zeta}{g+\Id^{-1}(\tilde\gamma)}
\cdot \psi_{+,n}(\deltag)\right).
\end{array}$$
We consider a pair $(\zeta, \tilde \gamma) \in F_n \times \tilde E_n$
such that $\Vert \zeta \Vert_{F_n} < \Vert z'_g \Vert_{F_n}$ and
$\Vert \tilde \gamma \Vert_{\tilde E_n} < \Vert z'_g \Vert_{F_n}$.
Then,
$$\Vert z'_g + t(t{-}4a) \zeta' + 2(t{-}2a) \zeta \Vert_{F_n} =
\Vert z'_g \Vert_{F_n}.$$
Write $\gamma = \Id^{-1}(\tilde\gamma)$.
From the definition of the norm on $\tilde E_n$,
we derive $\Vert \gamma \Vert_{E_n} < \Vert g \Vert_{F_n}$, which
further implies that $\Vert \gamma \Vert_{F_n} < \Vert g \Vert_{F_n}$.
We deduce that $\Vert g{+}\gamma \Vert_{F_n} = | (g{+}\gamma)(0) | =
\Vert g \Vert_{F_n}$, showing then that
$\Vert(g{+}\gamma)^{-1}\Vert_{F_n} =
\Vert g{+}\gamma \Vert_{F_n}^{-1} = \Vert g \Vert_{F_n}^{-1}$. As a
consequence, we conclude that
$$\left\Vert
\frac{z'_g + t(t{-}4a) \zeta' + 2(t{-}2a) \zeta}{g+\Id^{-1}(\tilde\gamma)}
\right\Vert_{F_n} \leq \frac{\Vert z'_g \Vert_{F_n}}{\Vert g \Vert_{F_n}}$$
whenever $\Vert \zeta \Vert_{F_n} < \Vert z'_g \Vert_{F_n}$ and
$\Vert \tilde \gamma \Vert_{\tilde E_n} < \Vert z'_g \Vert_{F_n}$.
With the $\Lambda$-notation introduced in Eq.~\eqref{eq:defLambda},
we have proved that
$\Lambda(\tau_n)(x) \leq \log \Vert z'_g \Vert_{F_n} - \log \Vert g \Vert_{F_n}$
for all $x < \log \Vert z'_g \Vert_{F_n}$.
Applying \cite[Proposition~2.5]{carova15}, we deduce that
$$\forall x < \min \left(\log \Vert z'_g \Vert_{F_n}, \,
\log \frac{\Vert g \Vert_{F_n}} 2 \right), \quad
\Lambda(\theta_n)(x) \,\leq\, 2 x +
\log \left(\frac{\Vert g \Vert_{F_n}^2}{4{\cdot}\Vert z'_g \Vert_{F_n}}\right).$$
Applying now~\cite[Proposition~3.12]{carova14}, we find that
\begin{align}
& \theta_n\big(B_{E_n}(\delta)) = d\theta_n(0)\big(B_{E_n}(\delta)\big)
\subset B_{F_n}(\delta)
\label{eq:thetandelta} \\
\text{when}\quad &
\delta < \min \left(\Vert z'_g \Vert_{F_n}, \, \frac{\Vert g \Vert_{F_n}} 2,\,
\frac{\Vert g \Vert_{F_n}^2}{4{\cdot}\Vert z'_g \Vert_{F_n}}\right) =
\min \left(\Vert z'_g \Vert_{F_n},\,
\frac{\Vert g \Vert_{F_n}^2}{4{\cdot}\Vert z'_g \Vert_{F_n}}\right).
\nonumber
\end{align}
The last equality comes from the observation that
$\frac 1 2{\cdot}\Vert g \Vert_{F_n}$ is the geometrical mean between
the two others arguments in the minimum.
Passing to the limit on $n$ in Eq.~\eqref{eq:thetandelta}, we get the
proposition when $\gamma_1 = 0$. Finally, for a general $\gamma_1$, we
apply the same argument after having replaced $g$ by $g + \gamma_1$
and $\gamma_2$ by $\gamma_2 - \gamma_1$.
\end{proof}

\begin{cor}
\label{cor:semicontinuity}
Let $r \in ]0,1[$. We consider $g_0 \in \calV_r$ satisfying the two
following assumptions:
\begin{enumerate}[(a)]
\item $g$ does not vanish on the open ball of centre $0$ and radius $r$
in an algebraic closure of $K$,
\item the solution $z_{g_0}$ of Eq.~\eqref{eq:nonlinear2} is in $\calV_r$.
\end{enumerate}
Then, for all $\rho \in ]0,r[$ and all $g \in \calV_r$ such that
$$\Vert g - g_0 \Vert_r <
{\textstyle
\frac 1 2 \cdot \min\big(1, \log\big(\frac r \rho\big)\big)}
\cdot
\min\left(\Vert z'_{g_0} \Vert_\rho, \,
\frac{\Vert g_0 \Vert_\rho^2}{4{\cdot}\Vert z'_{g_0} \Vert_\rho}\right)$$
we have $z_g \in \calV_\rho$ and
$\left\Vert z_g - z_{g_0} \right\Vert_\rho
\leq \max\big(2, \,\frac 2 {\log(r / \rho)}\big)
\cdot \Vert g - g_0 \Vert_r$.
\end{cor}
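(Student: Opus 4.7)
The plan is to deduce the corollary directly from Proposition~\ref{prop:isometrynonlinear} applied at the smaller radius $\rho$ rather than $r$, with $\gamma_1 = 0$ and $\gamma_2 = g - g_0$, and to use Proposition~\ref{prop:controlnorm} as the bridge between the two natural norms $\Vert\cdot\Vert_r$ (in which the hypothesis is stated) and $\Vert\cdot\Vert_{\rho,+}$ (in which Proposition~\ref{prop:isometrynonlinear} measures perturbations).

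First I would check the hypotheses of Proposition~\ref{prop:isometrynonlinear} at radius $\rho$. The inclusion $\calV_r \subset \calV_\rho$ gives $g_0 \in \calV_\rho$ and $z_{g_0} \in \calV_\rho$ for free, and the non-vanishing condition on the open ball of radius $r$ immediately yields the same on the smaller ball of radius $\rho$. The bulk of the bookkeeping is turning the given bound on $\Vert g - g_0\Vert_r$ into the required bound on $\Vert g - g_0\Vert_{\rho,+}$. Proposition~\ref{prop:controlnorm} gives exactly
\[
\Vert g - g_0\Vert_{\rho,+} \;\leq\; \max\!\left(2,\tfrac{2}{\log(r/\rho)}\right)\cdot \Vert g - g_0\Vert_r,
\]
and noting the identity $\tfrac{1}{\max(2,\,2/\log(r/\rho))} = \tfrac{1}{2}\min(1,\log(r/\rho))$, one sees that the hypothesis of the corollary is crafted precisely to force the right-hand side to lie below $\min\bigl(\Vert z'_{g_0}\Vert_\rho,\,\Vert g_0\Vert_\rho^2/(4\Vert z'_{g_0}\Vert_\rho)\bigr)$, which is the smallness threshold demanded by Proposition~\ref{prop:isometrynonlinear}.

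Once the hypothesis is verified, Proposition~\ref{prop:isometrynonlinear} yields
\[
\frac{z_g - z_{g_0}}{t(t-4a)} \in \calV_\rho \quad\text{and}\quad
\left\Vert \frac{z_g - z_{g_0}}{t(t-4a)} \right\Vert_\rho \;\leq\; \Vert g - g_0\Vert_{\rho,+}.
\]
Multiplying back by $t(t-4a)$ and observing that its Gauss norm $\Vert t(t-4a)\Vert_\rho = \max(\rho/4,\rho^2)$ is at most $1$ for $\rho < 1$ (since $|a| = 1$), we obtain $z_g - z_{g_0} \in \calV_\rho$, hence $z_g \in \calV_\rho$, together with the desired estimate by chaining with the inequality from Proposition~\ref{prop:controlnorm}. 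There is no real obstacle here: the substantive analytic work was already carried out in Proposition~\ref{prop:isometrynonlinear}, and the corollary is essentially a re-packaging in the more ergonomic norm $\Vert\cdot\Vert_r$; the only point requiring care is the compatibility of scales between the two norms, which is precisely what the factor $\tfrac{1}{2}\min(1,\log(r/\rho))$ in the hypothesis is designed to handle.
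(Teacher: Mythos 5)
Your proof is correct and follows essentially the same route as the paper: apply Proposition~\ref{prop:isometrynonlinear} at radius $\rho$ to the perturbation $g-g_0$ (the paper takes $\gamma_1=g-g_0$, $\gamma_2=0$; your choice is the symmetric one and makes no difference), translate the hypothesis via Proposition~\ref{prop:controlnorm} using the identity $\max(2,\,2/\log(r/\rho))^{-1}=\tfrac12\min(1,\log(r/\rho))$, and conclude by multiplying back by $t(t{-}4a)$, whose Gauss norm is at most $1$. No gaps.
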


\begin{proof}
We apply Proposition~\ref{prop:isometrynonlinear} with $r = \rho$,
$g = g_0$,
$\gamma_1 = g - g_0$ and $\gamma_2 = 0$ and then conclude by using
Proposition~\ref{prop:controlnorm} combined with the fact that
$\Vert t(t{-}4a) \Vert_\rho \leq 1$.
\end{proof}

Corollary~\ref{cor:semicontinuity} implies in particular that, for any
real number $r \in ]0,1[$, the function $\calV_r \to \R$ taking $g$ to
$\min(r, \RoC(z_g))$ is continuous (where the domain $\calV_r$ is equipped
with the topology of the norm $\Vert \cdot \Vert_r$).
By Proposition~\ref{prop:isometrynonlinear}, it is even locally
constant around each point $g$ such that $\RoC(z_g) < r$.
This theoretical result looks quite interesting to us and raises a new
range of questions. In particular, can we expect similar results for a
wider class of nonlinear $p$-adic differential equations?

\bibliographystyle{alphaabbr}

\newcommand{\etalchar}[1]{$^{#1}$}


\end{document}